\numberwithin{equation}{section}
\providecommand{\U}[1]{\protect\rule{.1in}{.1in}}
\newtheorem{theorem}{Theorem}[section]
\newtheorem{corollary}[theorem]{Corollary}
\newtheorem{definition}[theorem]{Definition}
\newtheorem{example}[theorem]{Example}
\newtheorem{lemma}[theorem]{Lemma}
\newtheorem{proposition}[theorem]{Proposition}
\newtheorem{remark}[theorem]{Remark}
\newtheorem{assumption}[theorem]{Assumption}
\DeclareMathOperator*{\argmin}{arg\,min}
\newcommand{\veps}{\varepsilon}
\newcommand{\e}{\varepsilon}
\newcommand{\vol}{\mathrm{vol}}
\newcommand{\T}{\mathcal{T}}
\newcommand{\I}{\mathcal{I}}
\newcommand{\V}{\remmath{\mathfrak{v}}\addmath{\vartheta}}
\newcommand{\capV}{\remmath{\mathfrak{V}}\addmath{\Theta}}
\newcommand{\Cut}{\mathrm{Cut}}
\renewcommand{\r}{\mathfrak{r}}
\newcommand{\R}{\mathbb{R}}
\DeclareMathOperator{\Prob}{\mathbb{P}}
\newcommand{\M}{\mathcal{M}}
\definecolor{mygreen}{rgb}{0.1,0.75,0.2}
\newcommand{\E}{\mathbb{E}}
\renewcommand{\div}{\mathrm{div}}
\newcommand{\eps}{\varepsilon}
\newcommand{\Per}{\mathcal{P}}
\newcommand{\Iso}{\mathbb{I}}
\newcommand{\N}{\mathbb{N}}
\newcommand{\vv}{\V}
\newcommand{\CM}{\mathcal{C}_\M}
\newcommand{\Id}{\mathrm{Id}}
\newcommand{\one}{\mathds{1}}
\newcommand{\dd}{\mathrm{d}}
\newcommand{\DD}{\mathrm{D}}
\newcommand{\GTV}{\mathrm{GTV}}
\newcommand{\TV}{\mathrm{TV}}
\newcommand{\BV}{\mathrm{BV}}
\newcommand{\n}{\bm{n}}
\newcommand{\Bal}{\mathrm{Bal}}
\def\l{\left(}
\def\r{\right)}
\def\la{\left|}
\def\ra{\right|}
\def\lda{\left\|}
\def\rda{\right\|}
\def\lb{\left\{}
\def\rb{\right\}}
\def\rd{\right.}
\def\TLp#1{\mathrm{TL}^{#1}}
\def\Wp#1{\mathrm{W}^{#1}}
\def\dWp#1{d_{\Wp{#1}}}
\def\Lp#1{\mathrm{L}^{#1}}
\def\Wkp#1#2{\mathrm{W}^{#1,#2}}
\def\Ck#1{\mathrm{C}^{#1}}
\def\Ckc#1{\Ck{#1}_c}
\definecolor{darkred}{rgb}{0.6,0.1,0.1}
\definecolor{darkgreen}{rgb}{0.1,0.6,0.1}
\definecolor{darkblue}{rgb}{0.1,0.1,0.6}
\def\rem#1{}
\def\add#1{#1}
\def\addmath#1{#1}
\def\remmath#1{}
\def\addii#1{#1}
\providecommand{\keywordsnew}[1]{\textbf{\textit{\noindent  Keywords and phrases. }} #1}
\providecommand{\subjclassnew}[1]{\textbf{\textit{\noindent Mathematics Subject Classification. }} #1}
\title{From graph cuts to isoperimetric inequalities: Convergence rates of Cheeger cuts on data clouds}
\author{Nicol\'as Garc\'ia Trillos}
\address{Department of Statistics, University of Wisconsin, Madison, Wisconsin, USA}
\email{garciatrillo@wisc.edu}
\author{Ryan Murray}
\address{Department of Mathematics, North Carolina State University, Raleigh, NC, USA}
\email{rwmurray@ncsu.edu}
\author{Matthew Thorpe}
\address{Department of Mathematics, University of Manchester, Manchester, Greater Manchester, UK}
\email{matthew.thorpe-2@manchester.ac.uk}
\thanks{NGT was supported by NSF grant DMS 1912802}
\thanks{MT was supported by the European Research Council under the European Union's Horizon 2020 research and innovation programme grant agreement No 777826 (NoMADS) and grant agreement No 647812}
\begin{document}
\normalem

\begin{abstract}
In this work we study statistical properties of graph-based clustering algorithms that rely on the optimization of balanced graph cuts, the main example being the optimization of Cheeger cuts.
We consider proximity graphs built from data sampled from an underlying distribution supported on a generic smooth compact manifold $\M$.
In this setting, we obtain high probability convergence rates for both the Cheeger constant and the associated Cheeger cuts towards their continuum counterparts.
The key technical tools are careful estimates of interpolation operators which lift empirical Cheeger cuts to the continuum, as well as continuum stability estimates for isoperimetric problems.
To our knowledge the quantitative estimates obtained here are the first of their kind.
\end{abstract}

\maketitle

\noindent
\keywordsnew{asymptotic consistency, rates of convergence, isoperimetric inequalities, Cheeger cuts, nonlocal variational problems}

\noindent
\subjclassnew{49Q05, 58E12, 49J55, 49J45,  62G20}


\section{Introduction}

This paper studies statistical properties of graph-based methods in unsupervised learning, and specifically the use of Cheeger cuts for clustering. 
When data points are samples from some underlying distribution on a continuum domain and graphs are proximity graphs (e.g. $\veps$ or $k$-NN graphs), most of the existing literature establishes asymptotic consistency of these methods without providing any rates of convergence.
In this paper, we obtain new estimates for the graph total variation functional on data clouds that, when combined with recent quantitative isoperimetric inequalities on manifolds, allow us, for the first time, to provide high probability convergence rates for both the Cheeger constant (minima) and corresponding Cheeger cuts (minimizers) on graphs built from random data clouds.

The Cheeger cut methodology belongs to a broad family of graph-based methods in data analysis.
These are learning procedures that typically rely on the optimization of objective functions that depend on a similarity graph $G=(\M_n,w)$ on the observed data set $\M_n= \{ x_1, \dots, x_n\}$ and whose role is to capture geometric information from $\M_n$.
Here $w=(w_{ij})_{i,j=1}^n$ represents a weight matrix quantifying the level of similarity between different data points.
Our focus is on unsupervised learning, where the only information on the data available is the graph $G$ itself and no labeling information is present.
In such a scenario, a central goal of learning methods is to use the similarity graph $G$ to construct a meaningful summarized representation of the data set into clusters.
Heuristically, a cluster is a subset of $\M_n$ whose elements are more similar among themselves than when compared with those in other groups. 
This heuristic can be made more precise by describing clusters as groups of points that are well-connected among themselves and less so with elements from other groups.
A concrete mathematical way to use the similarity graph $G$ to capture this intuitive notion is to look for subsets $A$ for which the \emph{cut} functional 
\[ \Cut(A):= \sum_{x_i \in A, x_j \in \M_n \setminus A} w_{ij}, \quad A \subseteq \M_n\]
is small.
As can be seen from the above formula, $\Cut$ penalizes subsets of $\M_n$ that have big interactions with their complements.
An alternative name usually given to this functional is \emph{graph perimeter}, but here we will save this name to refer to an appropriately rescaled version of $\Cut$.

The cut functional has been studied for decades in mathematics and computer science, e.g.~\cite{ARV,bresson2012,Thomas1,kannan04,SzlamBresson,ford56,hagen92,HeinSetz,vonLuxburg07,wei89,spielman04,ShiMalik,arias2012normalized}.
For example, a celebrated result in optimization relates the $\Cut$ minimization problem, subject to hard membership constraints, with a maximum flow problem, a linear program, via duality theory~\cite{ford56}.
In the context of clustering, it is worth noticing that, while direct minimization of $\Cut$ is reasonable as it penalizes the size of the interface between sets, optimization of $\Cut$ on its own is not able to rule out partitions into groups that are highly asymmetric in terms of size, and thus in order to avoid undesirable partitions, the cut functional must be modified appropriately.
For this purpose extra balancing terms are added to the objective function in either additive or multiplicative form so as to penalize ``volume'' asymmetry.
Some prototypical examples of optimization problems in the family of \emph{balanced cuts} are:
\begin{align}
\min_{A\subseteq \M_n} \frac{\Cut(A)}{\min \{\vol_{\M_n}(A), \vol_{\M_n}(A^c) \}}, \label{eq:CheegerCut} \\
\min_{A \subseteq \M_n } \frac{\Cut(A)}{\vol_{\M_n}(A) \cdot \vol_{\M_n}(A^c) } \label{eq:RatioCut}
\end{align}
or 
\begin{equation}
\label{def:ModularityClust}
\min_{A \subseteq \M_n } \Cut(A) + \gamma \left(\vol_{\M_n}(A)^2 + \vol_{\M_n}(A^c)^2 \right).
\end{equation}
The problems~\eqref{eq:CheegerCut} and~\eqref{eq:RatioCut} are known as \emph{Cheeger cut} and \emph{ratio cut} optimization problems respectively (see \cite{Chung1997,vonLuxburg07}).
Problem~\eqref{def:ModularityClust} is known to be equivalent to the problem of \emph{graph modularity clustering} (see~\cite{hu13,Boyd2017SimplifiedEL}); in all the above formulas $\vol_{\M_n}(\cdot)$ stands for the number of elements in a given set divided by $n$.
We observe that, in essence, all the above are $\Cut$ minimization problems (i.e. graph perimeter minimization) with some form of soft volume constraint.
We also remark that in~\eqref{eq:CheegerCut},\eqref{eq:RatioCut} and~\eqref{def:ModularityClust} the target is a two-way partitioning of the data, meaning that the desired partition consists of exactly two sets, namely $A$ and $A^c$, but it is possible to formulate analogous optimization problems in the multi-way clustering setting (e.g.~\cite{hu13,TSvBLX_jmlr,Boyd2017SimplifiedEL}).
In the sequel we focus on the two-way clustering setting.
\vspace{\baselineskip}

Although each of~\eqref{eq:CheegerCut},\eqref{eq:RatioCut} and~\eqref{def:ModularityClust} are quite natural objectives for clustering due to their geometric character, graph partitioning problems are, in their full generality, known to be NP-hard~\cite{kaibel2004expansion}.
Accordingly, in the past two decades a lot of the theoretical effort in the graph-based learning communities focused on analyzing relaxations of these partitioning methods.
For example, in the case of the ratio cut functional~\eqref{eq:RatioCut}, up to a multiplicative constant, the energy associated with $A \subset \M_n$ can be rewritten as
\begin{equation} \label{eq:CFWEigenPair}
\frac{\sum_{i,j}w_{ij}|u(x_i) - u(x_j)|^2}{\sum_{i=1}^n | u(x_i) - \frac{1}{n}\sum_{j=1}^n u(x_j)|^2}
\end{equation}
where $u = \one_{A}$ (assuming weights $w_{ij}$ are symmetric).
One relaxes the problem by allowing $u$ to be an arbitrary function $u: \M_n \rightarrow \R$.
Upon minimizing this relaxed functional, one recovers the variational description for the first non-trivial eigenpair (i.e. the Fiedler eigenpair) of the unnormalized \emph{graph Laplacian}. 
This relaxed problem is convenient from a computational point of view as it only requires an eigensolver for a positive semidefinite matrix, namely the graph Laplacian.
One can subsequently find an appropriate sublevel set via thresholding the Fiedler eigenvector in order to cluster the data into two sets.
This relaxation procedure for constructing graph partitions can be motivated by graph theoretical results such as Cheeger's inequality in its graph version~\cite{mohar89} which, just as in its continuum counterpart due to Cheeger~\cite{cheeger1969lower}, relates the Cheeger constant of the graph (i.e. the minimum value of the Cheeger cut) with the first non-trivial eigenvalue of the graph Laplacian.
In addition, higher-order eigenmodes of the graph Laplacian are useful for multi-way clustering: this idea lies at the heart of the celebrated \emph{spectral clustering} algorithm~\cite{ng2002spectral}.
The spectrum of the graph Laplacian can also be used to extract multiscale geometric information from the original data set $\M_n$ by means of appropriate \emph{diffusion maps}~\cite{coifman2006diffusion,Coifman7426,Portegies}.

The general utility of these Laplacian-based methods can be seen by the wide body of literature analyzing their statistical properties in supervised, semi-supervised and unsupervised settings.
Many of these works aim to provide specific connections between these graph-based methodologies with analogous geometric and analytical notions on manifolds and other objects at the continuum level; this goal is summarized with the term \emph{manifold learning}.
A typical model assumption of analysis in this field is to think of points $\M_n=\{ x_1, \dots, x_n \}$ as samples from some distribution supported on a low dimensional manifold $\M$ embedded in an ambient space $\R^d$, and to consider weights $w$ that are determined by the proximity of points in $\R^d$. Namely, high weights are given to pairs of points that are close to each other.
A popular construction involves the choice of a connectivity parameter $\veps>0$, defining weights by $w_{ij}= \eta\left( \frac{|x_i-x_j|}{\veps} \right)$, where $\eta$ is a non-increasing function decaying fast enough to zero.
Over the past two decades many theoretical results have been obtained for the asymptotic consistency of graph Laplacians towards continuum limits.
These include pointwise consistency~\cite{HeinvonLuxburgAudibert,Singer,CalderpLap,calder20}, and stronger spectral consistency~\cite{BuragoIvanovKurylev,CalderTrillos,vonLuxburg,belkin2007convergence,GTSSpectralClustering,garciatrillos19} which were more suitable for the proposed learning methods.
Subsequently, these analytical tools have facilitated a deeper understanding of various statistical methods \cite{dunlop2019large,CalderSlepcev,NGT-RM,el2016asymptotic,TASK}.
\vspace{\baselineskip}

In addition to this theoretical development of graph Laplacian methods, in the past decade there has been a renewed interest among researchers in analyzing the original un-relaxed cut functional and its use for data clustering.
On the one hand, this was driven by significant theoretical activity in the image processing communities, where total variational functionals provide natural energies for describing sharp edges (see \cite{ChambolleNovaga} for a broad overview).
On the other hand, new algorithmic improvements for total variation minimization made the original family of balanced cut optimization problems more accessible~\cite{chambolle11,Thomas1,bluv13,HeinSetz,SzlamBresson,HeinBuhl}.
This renewed interest also motivated further theoretical analysis from a statistical point of view.
Analyzing balanced cut type problems, such as~\eqref{eq:CheegerCut} and~\eqref{eq:RatioCut}, is difficult because the associated optimization problems are highly non-convex, with an objective function that depends non-trivially on random data.
One approach to analyzing large sample properties of the Cheeger cut problem is given in ~\cite{narayanan09,arias2012normalized}.
These works consider the minimization of the Cheeger cut functional on a $\veps$-graph, but only over subsets of $\M_n$ that are restrictions to the data set of sufficiently regular subsets of the underlying manifold $\M$.
In this constrained setting, the classical approaches of statistical learning that rely on the computation of VC dimensions or other capacity measures were used to deduce convergence rates of the ``restricted" discrete Cheeger constant towards a corresponding counterpart at the manifold level.
To our knowledge these are the only works which obtain convergence rates for Cheeger constants based upon continuum sampling.
However, in many practical settings the manifold $\M$ is unknown, and hence the constraints imposed in \cite{arias2012normalized} may be difficult to guarantee in applications.

Variational approaches to analyzing cuts on graphs in the large data limit were, to our knowledge, first introduced in~\cite{Bertozzi}.
There, the authors established a link between discrete and continuum Cheeger cuts via $\Gamma$-convergence type arguments for a Ginzburg--Landau approximation of the cut functional defined on a graph.
The analysis, however, required the data points to form a regular lattice, and in particular, it was not clear how to extend the ideas to data sampled from a distribution or more irregular point arrays. 
The work~\cite{GarciaTrillos2015} introduced a framework, built upon both optimal transportation and variational methods, which rigorously connects the Cut functional on graphs (built from random data) and continuum total variation.
These tools provide an almost optimal condition on the scaling of $\veps$ (i.e. the connectivity length scale of the proximity graph) in terms of $n$ for the consistency to hold.
In order to link the discrete and continuum energies and their minimizers, the authors introduced the so-called $\TLp{p}$ distance.
The key idea with these spaces is to consider couplings between measures and functions, i.e. $(\mu,f)\in \TLp{p}$ if $\mu$ is a probability measure and $f\in\Lp{p}(\mu)$.
A metric is then defined between couplings $(\mu,f)$, $(\nu,g)$ that is equivalent to the Wasserstein distance between $(\Id\times f)_{\#}\mu$ and $(\Id\times g)_{\#}\nu$.
With this topology in hand the authors provide a link between graph and continuum total variations via $\Gamma$-convergence (e.g.~\cite{DalMaso,braides2002gamma}), a tool developed precisely in order to describe the convergence of variational problems.
This analytical framework has been applied directly to the Cheeger (and other) cut problems in~\cite{trillos2017estimating,TSvBLX_jmlr}, but without rates. 
It has also been applied to the Ginzburg--Landau approximation to extend the results in~\cite{Bertozzi} to the random data setting in~\cite{thorpe19,cristoferi18AAA}. 
In order to avoid non-degeneracy of a graph total variation type functional, the connectivity radius $\eps$ cannot scale to zero too quickly. 
Originally this required the connectivity radius $\eps$ to be much greater than $\infty$-Wasserstein distance between the empirical measure $\nu_n=\frac{1}{n}\sum_{i=1}^n \delta_{x_i}$ and the data generating measure $\nu$ (i.e. $\eps\gg\dWp{\infty}(\nu_n,\nu)$).
In dimensions $d\geq 3$ the connectivity of the graph is of the same order as $\dWp{\infty}(\nu_n,\nu)$, and for $d=2$ there is a logarithmic correction factor which meant a small gap between the two~\cite{PenroseBook,W8L8canadian}.
In~\cite{muller18AAA} the exponent for the two dimensional case was improved, and as of right now, consistency of Cheeger constants and cuts is known to hold provided the graph connectivity is asymptotically above the connectivity threshold.
All of this without providing any quantitative rates of convergence.

In summary, the works that have studied the convergence of graph Cheeger constants and corresponding Cheeger cuts towards continuum counterparts have done so without providing high probability convergence rates, while the only works that provide some convergence rates for Cheeger constants (and not for their associated optimal cuts) do so by modifying the original problem in a manner that is not fully satisfactory from the point of view of applications.
Thus, the question of finding high probability convergence rates for graph Cheeger constants and cuts on proximity graphs, to the authors' knowledge, has not been addressed anywhere in the literature.
Our goal is to provide a first work in this direction.
\vspace{\baselineskip}

In this paper we present high probability rates for the convergence of both Cheeger constants (i.e. minima) and corresponding Cheeger cuts (i.e. minimizers) towards their analogous continuum counterparts.
Our approach will strongly highlight how analytical ideas can help provide answers to problems that have been elusive using traditional tools from statistics or statistical learning theory.
Our contribution will begin by establishing new estimates on the graph total variation functional on proximity graphs.
These estimates will allow us to obtain high probability convergence rates for Cheeger constants.
Our second contribution will be to connect the problem of convergence of minimizers (i.e. cuts) to recent results in geometric measure theory related to quantitative isoperimetric inequalities.
Quantitative isoperimetric inequalities are a family of relations that hold at a \emph{continuum} level (i.e. in $\R^d$ or on generic manifolds).
Heuristically, they capture a type of ``strong local convexity'' of the volume constrained perimeter minimization problem (at the continuum level) near minimizers.
One of the morals we hope to convey in this paper is that in the geometric setting, where continuum variational problem often possesses significant structure, it is often possible to deduce strong statistical properties of data analysis procedures, which are defined at the \emph{discrete}, finite sample level.
We firmly believe that the bridge between data analysis and mathematical analysis is both necessary and promising.

Finally, it is worth saying that we do not claim optimality of our convergence rates, and in fact we believe that further improvement to our results is possible.

\subsection{Outline}
The rest of the paper is organized as follows.
In Section~\ref{sec:problemsetup} we present our problem setup and state our main results precisely.
In particular, Sections~\ref{sec:setupDiscrete} and~\ref{sec:contFramework} describe the discrete and continuum problems that we aim to connect in this paper.
In Section~\ref{sec:CheegerConstants} we present our first main result (Theorem~\ref{thm:cheeger-constant-conv}) stating the high probability convergence rates of Cheeger constants (i.e. convergence of minima).
In Section~\ref{subsubsec:problemsetup:CheegerCutsDetour} we take a small detour and present a short discussion on isoperimetric problems, and discuss the connection of these problems with the problem of establishing convergence rates for Cheeger cuts (i.e. convergence of minimizers).
Section~\ref{subsubsec:problemsetup:CheegerCutsRes} contains our convergence rates for Cheeger cuts (Theorem~\ref{thm:rates-Cheeger-Cuts}).
Section~\ref{subsec:problemsetup:IsoStabRev} expands the discussion on isoperimetric stability and provides pointers to the literature.

Section~\ref{sec:nonlocal} contains a series of analytical results on non-local total variation energies.
We wrap up the paper in Section~\ref{sec:ProofMainResults} where we present the proofs of our main results.

\section{Problem Setup and Main Results}
\label{sec:problemsetup}

Throughout the paper $\M$ will denote a smooth, connected, orientable, compact, Riemannian manifold of dimension $m$, without boundary, embedded in $\R^d$. 
In particular, the Riemannian metric tensor on $\M$ is the one inherited from $\R^d$.
This will allow us to relate the geodesic distance on $\M$ with the Euclidean metric for points on $\M$ that are close enough.
We use $\vol_\M$ to represent $\M$'s volume form, and without \rem{the} loss of generality \rem{we we will} assume that the manifold $\M$ is normalized so that $\vol_\M(\M)=1$, and in particular $\nu:=\vol_{\M}$ is a probability measure (the uniform distribution) on $\M$.
We let $i_\M>0$ be the injectivity radius of $\M$, which defines the maximum radius of a $m$-dimensional ball centered at the origin of an arbitrary tangent plane $\T_x\M$ for which the exponential map $\exp_x: \T_x\M \rightarrow \M$ is a diffeomorphism.
Other notions from differential geometry that are used in the paper are discussed in Chapters 1-3 in~\cite{docarmo1992riemannian}, and are introduced as needed. 

Throughout the paper we will use letters like $v , \tilde v$ to represent tangent vectors on $\M$.
We will use $u,\tilde u$ to represent discrete functions from $\M_n$ into $\R$, and use $f,\tilde f$ to represent functions from $\M$ into $\R$. 
For two sets we write $A \Delta B = (A \backslash B) \cup (B \backslash A)$ to denote their symmetric difference.

\subsection{Discrete Set-Up}
\label{sec:setupDiscrete}

We will use $\M_n=\{x_i\}_{i=1}^n$ to represent the discrete approximation of the manifold $\M$ via sampling from a distribution supported on $\M$.
We use $\nu_n = \frac{1}{n}\sum_{i=1}^n \delta_{x_i}$ to represent the associated empirical measure.
The graph $G_n$ is defined to be the set of vertices $\M_n$ with edge weights $\{w_{ij}\}_{i,j=1}^n$, representing the similarities between nodes $x_i$ and $x_j$.
Here we consider weights of the form
\[ w_{ij} := \eta \left( \frac{|x_i-x_j|}{\veps} \right),\]
where $\e>0$ is a small parameter representing a length scale on which we consider points to be neighbors.
We will assume for simplicity that the kernel $\eta: \mathbb{R}^+ \to \mathbb{R}^+$ takes the form
\[ \eta(t) = \one_{\{t\leq 1\}} = \lb \begin{array}{ll} 1 & \text{if } t\leq 1 \\ 0 & \text{else} \end{array} \rd \]
although the analysis presented here can be extended to other choices of decreasing kernel with only minor modifications needed.
We will, later on, rescale these edge weights in order to obtain appropriate asymptotic limits.

We will assume for simplicity that the points $\{x_i\}_{i=1}^n$ are sampled from the distribution $\nu$ with density given by $\rho \equiv 1$ (with respect to the natural volume form $\vol_\M$).
The extension to smooth, non-degenerate $\rho$ does not cause any significant technical changes, but burdens the notation: we only consider the uniform case for clarity.

Given two arbitrary functions $u, \tilde u : \M_n \rightarrow \R $ we define their inner product:
\[ \langle u, \tilde u \rangle_{\Lp{2}(\M_n)}:= \frac{1}{n}\sum_{i=1}^n u(x_i) \tilde u(x_i), \] 
as well as the $\Lp{2}(\M_n)$ norm
\[ \lVert u \rVert_{\Lp{2}(\M_n)} := \sqrt{\langle u, u\rangle_{\Lp{2}(\M_n)}} \]
and the $\Lp{1}(\M_n)$ norm
\[ \lVert u \rVert_{\Lp{1}(\M_n)} := \frac{1}{n}\sum_{i=1}^n |u(x_i)|. \]

For a function $u: \M_n \to \R$, we also define the \emph{graph total variation seminorm} as: 
\begin{equation}
\label{def:GTV}
\GTV_{n,\veps}(u):= \frac{1}{n^2\veps^{m+1}} \sum_{i=1}^n \sum_{j=1}^n w_{ij}| u(x_i) - u(x_j)|.
\end{equation}
We notice that when $u$ is an indicator function of a subset $E_n$ of $\M_n$ we get the following:
\[ \GTV_{n,\veps}(\one_{E_n}) = \frac{2}{n^2\eps^{m+1}} \Cut(E_n). \]

In the remainder we will use $\mathcal{C}_{n,\veps}$ to denote the rescaled graph Cheeger constant
\begin{equation}
\label{eqn:CheegerGraph} 
\mathcal{C}_{n,\veps}:= \min_{E_n \subseteq \M_n} \frac{\GTV_{n,\veps}(\one_{E_n})}{ \min \{ \nu_n(E_n), 1 - \nu_n(E_n) \} },
\end{equation}
and use $A_n^*$ or $E_n^*$ to denote an arbitrary minimizer.

\subsection{Continuum Framework}
\label{sec:contFramework}

We consider the classical spaces $\Lp{1}(\M)$ and $\Lp{2}(\M)$ with norms
\[ \lVert f \rVert_{\Lp{1}(\M)}:= \int_{\M} |f(x) | \, \dd \vol_\M(x), \qquad \lVert f \rVert_{\Lp{2}(\M)}:= \left(\int_{\M} |f(x) |^2 \, \dd \vol_\M(x)\right)^{1/2}. \]

For a given $f \in \Lp{1}(\M)$ we define its \emph{total variation} seminorm as
\begin{equation}
\label{def:TV}
\TV(f):= \sup \left\{ \langle \div(V) , f \rangle_{\Lp{2}(\M)} \: : \: V \in \mathfrak{X}(\M), \quad |V(x)|_x \leq 1, \quad \forall x \in \M \right\},
\end{equation}
and say that $f \in \BV(\M)$ if $\TV(f) < \infty$.
In the above, $\mathfrak{X}(\M)$ denotes the set of all smooth vector fields on $\M$, $\div$ is the divergence operator (on $\M$) mapping smooth vector fields into real valued smooth functions on $\M$, and $|\cdot|_x$ is the Riemannian norm in the tangent space at $x\in\M$.
We define the \emph{perimeter} of a measurable $E \subset \M$ via
\[ \Per(E) := \TV(\one_E). \]
We notice that the expression~\eqref{def:TV} reduces to
\begin{equation}
\TV(f)= \int_{\M} |\nabla f (x)|_x \, \dd\vol_\M(x) ,
\label{eqn:smoothf}
\end{equation} 
when $f$ is a smooth function (i.e. $f \in \Ck{\infty}(\M)$), where here $\nabla f$ is the gradient (with respect to $\M$'s Riemannian metric) of $f$.
Likewise, the perimeter of a subset $E \subseteq \M$ with smooth boundary $\partial E$ can be written as 
\[ \Per(E) = \int_{\partial E} \dd \mathcal{H}^{m-1}(x), \]
where $\mathcal{H}^{m-1}$ is the $m-1$ dimensional Hausdorff measure on $\M$. 

We recall that the \emph{coarea} formula allows us to write the total variation of a $\BV$ function in terms of the perimeter of its level sets. 
Namely, for a given $f \in \BV(\M)$ we have
\begin{equation}
\label{eqn:Coarea}
\TV(f) = \int_{-\infty}^{\infty} \mathcal{P}(\{ x \in \M \: : \: f(x) \leq t \}\} ) \, \dd t. 
\end{equation}

In the remainder we will use $\CM$ to denote the Cheeger constant
\begin{equation} 
\label{eqn:CheegerCont}
\CM:= \min_{E \subseteq \M} \frac{\Per(E)}{ \min \{ \nu(E), 1 - \nu(E) \} },
\end{equation}
and use $A^*$ or $E^*$  to denote an arbitrary minimizer.

\subsection{Main Results}
\label{sec:mainresults}

For ease of reference we summarize the assumptions on the manifold $\M$ and the length scale $\eps$.

\begin{assumption}
\label{assumptions}
Let $\M$ be a smooth, connected, orientable, compact, Riemannian manifold of dimension $m$, without boundary, embedded in $\R^d$.
We assume that $\eps\leq \eps_0$ where $\eps_0>0$ is some sufficiently small parameter and, in particular, is smaller than the injectivity radius $i_\M$.
%
%
%
%
\end{assumption}

We have two main results: the first is a rate of convergence for the Cheeger constants (Theorem~\ref{thm:cheeger-constant-conv}) and the second is a rate of convergence for Cheeger cuts (Theorem~\ref{thm:rates-Cheeger-Cuts}).
We give the result for the convergence of Cheeger constants in the next section.
The convergence of Cheeger cuts requires more notation and assumptions than we have introduced thus far so we include in Section~\ref{subsubsec:problemsetup:CheegerCutsDetour} a detour into isoperimetric stability before stating our rate of convergence for Cheeger cuts in Section~\ref{subsubsec:problemsetup:CheegerCutsRes}.

\subsubsection{Convergence Rates for Cheeger Constants}
\label{sec:CheegerConstants}

Our first main result establishes a quantitative convergence rate of the graph Cheeger constant~\eqref{eqn:CheegerGraph} towards the Cheeger constant on the manifold $\M$~\eqref{eqn:CheegerCont}.
The precise theorem is as follows.

\begin{theorem}
\label{thm:cheeger-constant-conv}
(Cheeger constants)
\rem{Under assumptions \ref{assumptions} on $\veps$ and $\M$, and assuming $\delta, \theta, \zeta>0$ are all small enough quantities and in particular $\delta \leq \veps/4$, 
there exist constants (that may depend on $\M$) $C_1,C_2,C,c,c'>0$ such that if $n\zeta\eps^{\frac{m+1}{2}}\geq c$ then, with probability at least $1- n \exp(-cn\theta^2\delta^m)-C \exp\left(-cn\zeta \min\{\e^{\frac{m+1}{2}},\eps\zeta\}\right)$, we have:}
\add{Let $\M$ and $\veps$ satisfy Assumptions~\ref{assumptions}.
Then, there exists constants (that may depend on $\M$) $\theta_0,\zeta_0,C_1,C_2,C,c,c'>0$ such that for any $\delta,\theta,\zeta>0$, with $n\zeta\eps^{\frac{m+1}{2}}\geq c$, $\delta\leq \frac{\veps}{4}$ , $ c' \log(n)/n \leq \theta^2 \delta^m $, $\theta\leq \theta_0$ and $\zeta\leq \zeta_0$, we have} \addii{that}\add{, with probability at least $1- n \exp(-cn\theta^2\delta^m)-C \exp\left(-cn\zeta \min\{\e^{\frac{m+1}{2}},\eps\zeta\}\right)$,}
\begin{enumerate}
\item Upper bound:
\[ \mathcal{C}_{n,\veps} \leq \sigma_\eta \CM + C_1\left(\e^2 + \zeta\right). \]
\item Lower bound:
\[ \sigma_\eta \CM \leq \mathcal{C}_{n,\veps} + C_2 \left( \sqrt[3]{\eps} + \frac{\delta}{\veps} + \theta + \zeta \right), \]
\end{enumerate}
where $\sigma_\eta$ is the ``surface tension''
\begin{equation}
\label{def:sigmaeta}
\sigma_\eta:= \int_{\R^m}\eta(|z|)|z_1|\, \dd z = \int_{B(0,1)} |z_1| \, \dd z,
\end{equation}
and in the above, $z_1$ stands for the first coordinate of a given vector $z \in \R^m$.
\end{theorem}

The parameter $\eps$ has already been defined as the length scale determining when two data points should be considered neighbors.
The parameters $\delta$ and $\theta$ control how fast the empirical measure $\nu_n$ is converging.
More precisely we consider a smooth approximation $\widetilde{\nu}_n\in\mathcal{P}(\M)$ of $\nu$ with the properties that the density $\widetilde{\rho}_n$ of $\widetilde{\nu}_n$ satisfies $\widetilde{\rho}_n(x)\in[1-\delta-\theta,1+\delta+\theta]$ and $\dWp{\infty}(\nu_n,\widetilde{\nu}_n)\leq \delta$ for every $x\in\M$.
Such a $\widetilde{\nu}_n$ exists with probability at least $1-ne^{-cn\delta^m\theta^2}$.
The reason for using the intermediary probability measure $\widetilde{\nu}_n$ is to get a better probability bound when $m=2$; in particular, for $m>2$ we could use the bound $\dWp{\infty}(\nu_n,\nu)\leq \delta$ with probability at least $1-Cn^{-\alpha}$ for any $\alpha>1$~\cite{garciatrillos19}.
When $m=2$ there is an additional logarithmic correction factor which makes the rates suboptimal (and rather cumbersome to continually give $m=2$ a different rate).
However, the introduction of the intermediary $\widetilde{\nu}_n$ avoids these problems and we recover optimal (up to constants) probability bounds.

The final parameter $\zeta$ comes from a concentration inequality for U-statistics.
Our result is approximately that for each function $u:\M\to\R$ the difference between $\GTV_{n,\eps}(u)$ and $\mathbb{E}[\GTV_{n,\eps}(u)]$ can be bounded by $\zeta$ with high probability (where the ``high probability'' depends on $\zeta$).

\begin{remark}\label{rem:Cheeger-rates}
Of course, one can choose the various parameters $\delta,\theta,\zeta$ in order to obtain concrete convergence rates with high probability as a function of the number of data points $n$.
In particular, if we neglect log terms, assume $m\geq 4$, and set the parameters as follows (by optimizing first the lower bound, and then subsequently the upper bound): 
\begin{align*}
\delta &= n^{-k_\delta},\qquad & k_\delta & = \frac{2}{1+2m} \\
\theta &= n^{-k_\theta},\qquad & k_\theta & = \frac{1}{2(1+2m)} \\
\e &= n^{-k_\e},\qquad & k_\e & = \frac{3}{2(1+2m)} \\
\zeta &= n^{-k_\zeta},\qquad & k_\zeta & = \frac{3}{1+2m}. \\
\end{align*}
This gives, for the lower bound, a rate of $n^{-\frac{1}{2+4m}}$ and for the upper bound a rate of $n^{-\frac{6}{2+4m}}$. 
We notice that in these rates $\delta \ll \e$ and $n\zeta\eps^{\frac{m+1}{2}}\gg 1$ as required, and that the argument of the exponentials in the probability estimates are all bounded away from zero (and hence could be made large by using appropriate logarithmic corrections).
Hence, after neglecting these logarithmic terms, we obtain converge rates of the Cheeger constants of order $n^{-\frac{1}{2+4m}}$. 
We also notice that as long as $\left( \frac{\log(n)}{n}\right)^{1/m}\ll \veps$, then the convergence of $\mathcal{C}_{n,\veps}$ towards $\sigma_\eta C_\M$ as $\veps\rightarrow 0$ and $n \rightarrow \infty$ is guaranteed.
\end{remark}

\begin{remark}
\label{rem:CheegerConsRateOpt}
We recall that the optimization problem in~\eqref{eq:CFWEigenPair} is a relaxation of the closely related ratio cut problem~\eqref{eq:RatioCut}.
In particular, the variational problems coincide when one restricts the minimization in~\eqref{eq:CFWEigenPair} to be over functions of the form $u=\one_A$.
By the Courant-Fischer-Weyl min-max principle the minimization of~\eqref{eq:CFWEigenPair} over $u:\M_n\to\R$ is equivalent to finding the Fiedler eigenpair of the graph Laplacian; in particular the minimum is the first non-zero eigenvalue and the minimizer the corresponding eigenvector.
In~\cite{garciatrillos19} the authors use similar techniques to show that the eigenvalue converges with rate (ignoring logarithms) $n^{-\frac{1}{2m}}$, which is better by approximately a squared factor (for large $m$ and ignoring logarithms the rate for the Cheeger constant is $n^{-\frac{1}{4m}}$).
To go between discrete and continuum both this paper and~\cite{garciatrillos19} use a smooth interpolating operator; in particular, a piecewise constant interpolation followed by mollification. 
The reason for the difference in rates is that in\cite{garciatrillos19} the authors can choose the mollifying kernel based on the choice of interaction potential $\eta$ that allowed a tighter control over the Dirichlet energy of the continuum approximation.
Here, there is not a natural choice of mollifying kernel and hence we must introduce an extra length scale at which we can control the total variation of the continuum approximation.
We also point out that the scaling in~\cite{garciatrillos19} is also suboptimal and has recently been improved to $n^{-\frac{1}{m+4}}$ using different techniques~\cite{CalderTrillos}.
\end{remark}

As we will show in Section \ref{sec:Upper} the upper bound in Theorem \ref{thm:cheeger-constant-conv} is obtained by comparing the graph total variation $\GTV_{n,\veps}$ and the total variation $\TV$ of a \textit{fixed} BV function $f : \M \rightarrow \R$, namely, a minimizer $f=\mathds{1}_{E^*}$ for the continuum Cheeger problem.
The probabilistic estimates behind this comparison rely on general concentration inequalities for $U$-statistics from \cite{GineUStats}.
It will become clear from our analysis that the error estimates for the upper bound are much tighter than the ones for the lower bound, where a simple pointwise convergence estimate (i.e. fixing a function on $\M$) does not suffice.
To be able to prove the lower bound, one of the main technical tools that we introduce is the construction of an interpolation map that relates subsets of $\M_n$ with subsets of $\M$, in a way that is possible to keep track of the error of approximation in a quantitative form.
This map is introduced in Section \ref{sec:Interpolation}.
Section \ref{sec:nonlocal} will lay the groundwork for this construction, and in particular we will prove several results relating the total variation functional $\TV$ and the \textit{non-local} TV seminorms $\TV_h$ defined by
\begin{equation}
\label{eqn:NonLocalTvseminorm}
\TV_{h}(f):= \frac{1}{h^{m+1}}\int_\M \int_\M |f(x) - f(y)| \eta\left( \frac{d_\M(x,y)}{h} \right)\dd \vol_\M(x) \dd \vol_\M(y), \quad f \in \Lp{1}(\M), 
\end{equation}
where here and in the remainder $d_\M(x,y)$ denotes the geodesic distance between the points $x,y \in \M$.
It is worth remarking that there are some connections between the interpolation map that we define here and the one used in \cite{BuragoIvanovKurylev} when analyzing \textit{Dirichlet} energies.
We will discuss more on this in Section \ref{sec:nonlocal}.

\subsubsection{Convergence Rates for Cheeger Cuts: a Detour into Isoperimetric Stability}
\label{subsubsec:problemsetup:CheegerCutsDetour}

A significant part of this work is devoted to proving convergence rates of the minimal \emph{values} of graph based TV energies towards the minimal \emph{values} of TV energies.
If our objective functions were a-priori strongly convex, one could immediately infer quantitative convergence rates for minimizers.
For example, in the case of the Rudin-Osher-Fatemi~\cite{rudin1992nonlinear} type problems, one seeks to minimize a functional given by $J(u) = \TV(u) + \|u-u_0\|_{\Lp{2}}^2$; here the strong convexity of the $\Lp{2}$ penalty could then be used to infer rates of convergence in $\Lp{2}$ using only information about the value of $J$.
The same principle holds in the case of trend filtering models; arguments regarding how our tools apply in those settings to those problems will be given in future works.

However, for Cheeger cut problems it is not obvious in what sense the TV semi-norm is strongly convex (at least locally around minimizers).
Fortunately, a significant body of recent work studying \emph{isoperimetric stability} provides tools for addressing this issue.
We will begin by describing the type of results from this work, and then give an overview of developments in this field.
To begin, we must first give some geometric definitions.

\begin{definition}
We define the {\bf isoperimetric \rem{function} \add{profile}} of our manifold $\M$ via
\[ \Iso(\vv) = \inf_{E \subseteq \M}\lb \Per(E) : \vol_\M(E) = \vv\rb. \]
We define the {\bf Fraenkel asymmetry} of a set via
\[ \alpha(E) = \inf_{E^*}\lb\vol_\M(E \Delta E^*) : \vol_\M(E^*)= \vol_\M(E), \Per(E^*) = \Iso(\vol_\M(E^*))\rb. \]
\end{definition}

In the case of continuum Cheeger problem, any minimizer is a mass-constrained perimeter minimizer, and hence the Cheeger problem reduces to minimizing $\frac{\Iso(\vartheta)}{\min(\vartheta,1-\vartheta)}$ \add{over $\vartheta \in (0,1)$}. The Fraenkel asymmetry measures the distance between a given set and the family of mass constrained perimeter minimizers.
This turns out to be the natural $\Lp{1}$ type distance for this problem, because in many manifolds with symmetries there exists a continuous family of mass-constrained perimeter minimizers.
One example is translates of a ball in $\R^d$. An analogous example on a compact manifold is given in Example~\ref{ex:torus}.

Having defined the energy and distance of interest, we now define variations of the boundary of a set (an in-depth description of these objects can be found in, e.g.,~\cite{Maggi-Book}).

\begin{definition}
Given $E \subset \M$ with smooth boundary $\partial E$, we let $\mathcal{D}_{E}$ be the family of all smooth $\phi: \M\times[-1,1] \to \M$ so that $\phi(x,0) = x$ and $\vol_\M(\phi(E,t)) = \vol_\M(E)$. 
We let $v_\phi = \frac{\dd}{\dd t} \phi(\cdot,0)$ be the initial velocity of the diffeomorphism $\phi$.
This is the family of smooth diffeomorphisms which preserve the mass of $E$.

We let
\[ \mathcal{B}_\delta(\partial E) = \lb f \in \Ck{2,\alpha}(\partial E;\R) : |f|_{\Ck{2,\alpha}} < \delta,\quad \int_{\partial E} f \, \dd \sigma_{g,E} = 0\rb, \]
where $\dd\sigma_{g,E}$ is the volume form that $\partial E$ inherits from $\M$.

We let $\n_E$ be a vector field which restricted to $\partial E$ is an outward unit normal vector field, and which is (in a $\delta$ tube around $\partial E$) a geodesic vector field.
For $f \in \mathcal{B}_\delta(\partial E)$, we let $E + f = \phi(E,1)$ for $\frac{\dd}{\dd t} \phi = \n_{E} \tilde f$, where $\tilde f$ is a smooth extension of $f$.
In other words, $E + f$ is the set obtained by flowing the boundary points of $E$ a distance $f$ along geodesics which are normal to $E$.

We write $D\Per(E)$ and $D^2 \Per(E)$ to represent the first and second variations of the functional $\Per(E + f)$ for $f \in \mathcal{B}_\delta(\partial E)$.
\end{definition}

We remark that the previous definitions have used the assumed regularity on $\partial E$, in particular when choosing $\delta$ so that $\n_{E}$ will be a geodesic field.
The regularity of mass constrained perimeter minimizers is a well-studied topic. In fact, the boundary of such a minimizer is known to be analytic at any point in the reduced boundary (namely where a weak normal vector can be defined); this is related to the fact that the boundary will have constant mean curvature, which can be converted into an equation of elliptic type. Furthermore, the set of points which are not in the reduced boundary for such minimizers, henceforth called \emph{singular points}, is known to have dimension at most $m-8$. Details of this theory may be found in ~\cite{Maggi-Book}, and further discussion is given later in this section.

Having stated this, we follow~\cite{chodosh2019riemannian} and assume that $\M$ is such that the boundary of a mass constrained perimeter minimizer $E^*$ has no singular points, meaning that it is $\Ck{\infty}$ (which only requires us to assume that it is $\Ck{3}$); see Assumptions \ref{assumption:Cheeger-sets} below. We also notice that we have restricted to mean zero variations in the definition of  $\mathcal{B}_\delta(\partial E^*)$ in order to preserve the volume of regions along variations.

In any case, the previous definition describes variations of a set in terms of deformations by smooth vector fields.
Using these definitions of variations, one can easily establish necessary conditions for a set to be a local minimizer of the perimeter.
In particular, we will be interested in the following necessary second-order conditions for local perimeter minimizers.

\begin{definition}
A smooth minimizer $E^*$ of the mass constrained isoperimetric problem is called {\bf strict} if there exists $c>0$ such that for any $f \in \mathcal{B}_\delta(\partial E^*)$
\[ D^2 \Per(E^*) [f,f] \geq c\|f \|_{\Wkp{1}{2}(\partial E^*)}^2. \]

A mass constrained perimeter minimizer $E^*$ is called {\bf integrable} if for any $f \in \mathcal{B}_\delta(\partial E^*)$ with $D^2 \Per(E^*)[f,f] = 0$ there exists a diffeomorphism $\phi$ satisfying \add{$\frac{\dd}{\dd t} \phi (\cdot,0) = f$} and $\phi(x,0) = x$ which satisfies $\vol_\M(\phi(E^*,t)) = \vol_\M(E^*)$ such that $\phi(E^*,t)$ is a critical point of the \add{mass-constrained perimeter} for all $t \in (-1,1)$.
\end{definition}

The definition of strict minimality can be seen as providing a type of Poincar\'e inequality for variations of the boundary of $E^*$; the importance of such Poincar\'e inequalities is further demonstrated in Example~\ref{ex:1D}.
Indeed, strict minimality is stronger than being an isolated local minimizer.

For the sake of illustration, we now give two examples of perimeter minimizers which are, respectively, strict and integrable, followed by an example where the assumption fails.

\begin{example}
Suppose that $\M$ is the ellipsoid shown in Figure~\ref{fig:Ellipse}.
Then the Cheeger set will be a strict mass-constrained perimeter minimizer for mass equal to $\vol_\M(\M)/2$.
\end{example}

\begin{example}
\label{ex:torus}
Suppose that $\M$ is a two dimensional torus embedded in $\R^3$, as shown in Figure~\ref{fig:Torus}.
The Cheeger set shown is a mass constrained perimeter minimizer for mass equal to $v=\vol_\M(\M)/2$.
In this case this minimizer is not strict, but will be integral, as there is a family of mass constrained perimeter minimizers given by rotating the set around the torus.
\end{example}

\add{We note that the one-parameter family of mass-constrained perimeter minimizers in this case is associated with an underlying symmetry of the manifold. In the Euclidean case (i.e. $\M = \R^d$), the translational symmetry always has to be accounted for in isoperimetric problems, and indeed provides significant motive to study integrable minimizers.}

\begin{example}\label{ex:non-strict}
Suppose that we modify the previous torus example so that the radius of the circles varies $\pi$ periodically in $\theta$ according to some function $f(\theta)$.
The Cheeger set, which corresponds to the global mass-constrained perimeter minimizer for volume equal to $\vol_\M(\M)/2$, should partition the manifold at local minimizers of $f$.
However, if the second derivative of $f$ is zero at that local minimizer, then we should not have that such a mass-constrained perimeter minimizer is strict in the sense given above.
This example relies on the $\pi$-symmetry of $f$ in order to handle the mass-constrained variations; a degenerate well is not sufficient by itself.
\end{example}

\add{Clearly not all manifolds have perimeter minimizers which are either strict or integrable, but the situation in Example \ref{ex:non-strict} where the assumption fails seems somewhat degenerate. It is possible that many manifolds of interest will have Cheeger sets which satisfy these properties.}

\begin{figure}
\centering
\subfloat[][]{\includegraphics[width=.5\textwidth]{./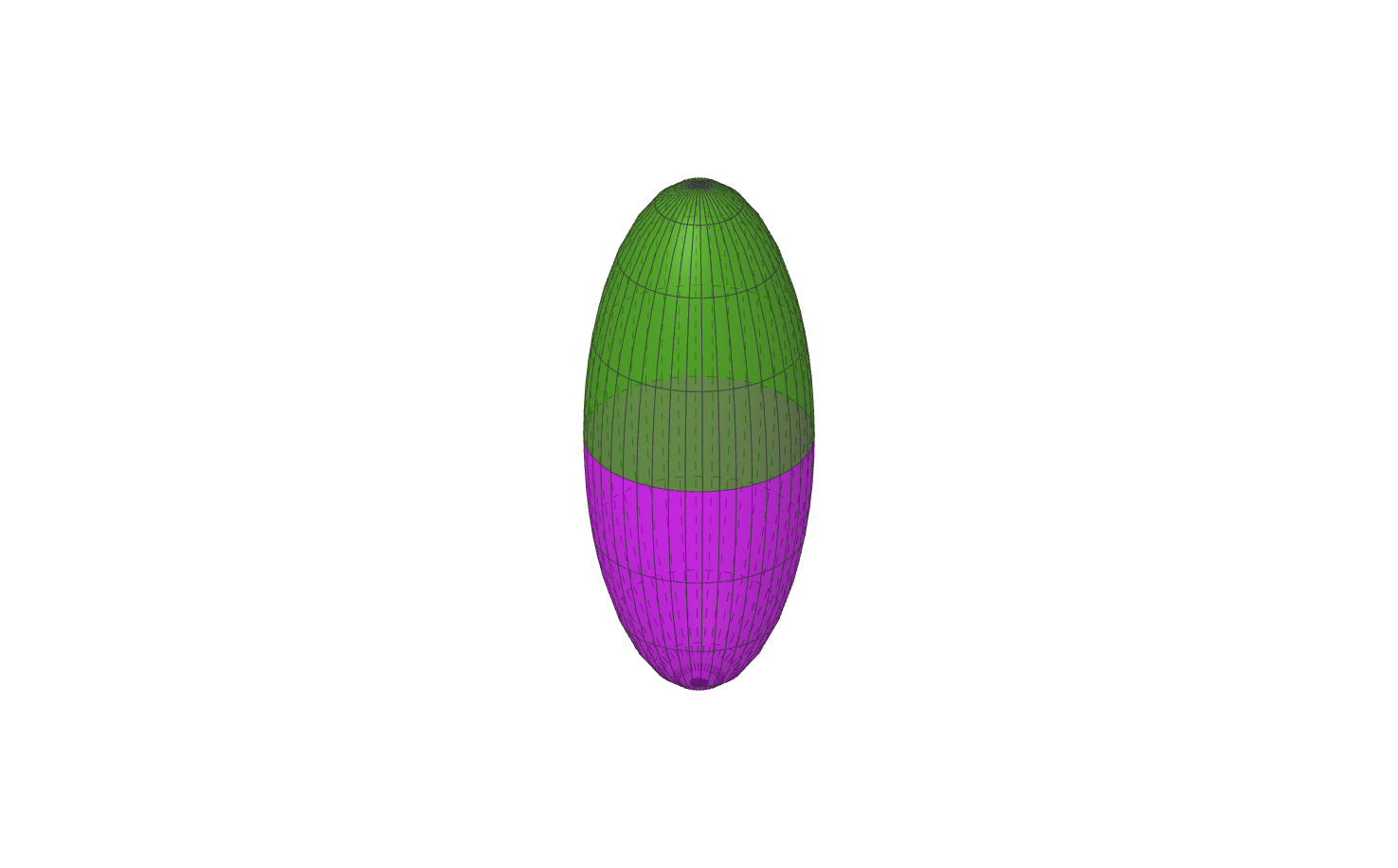}\label{fig:Ellipse}}
\subfloat[][]{\includegraphics[width=.5\textwidth]{./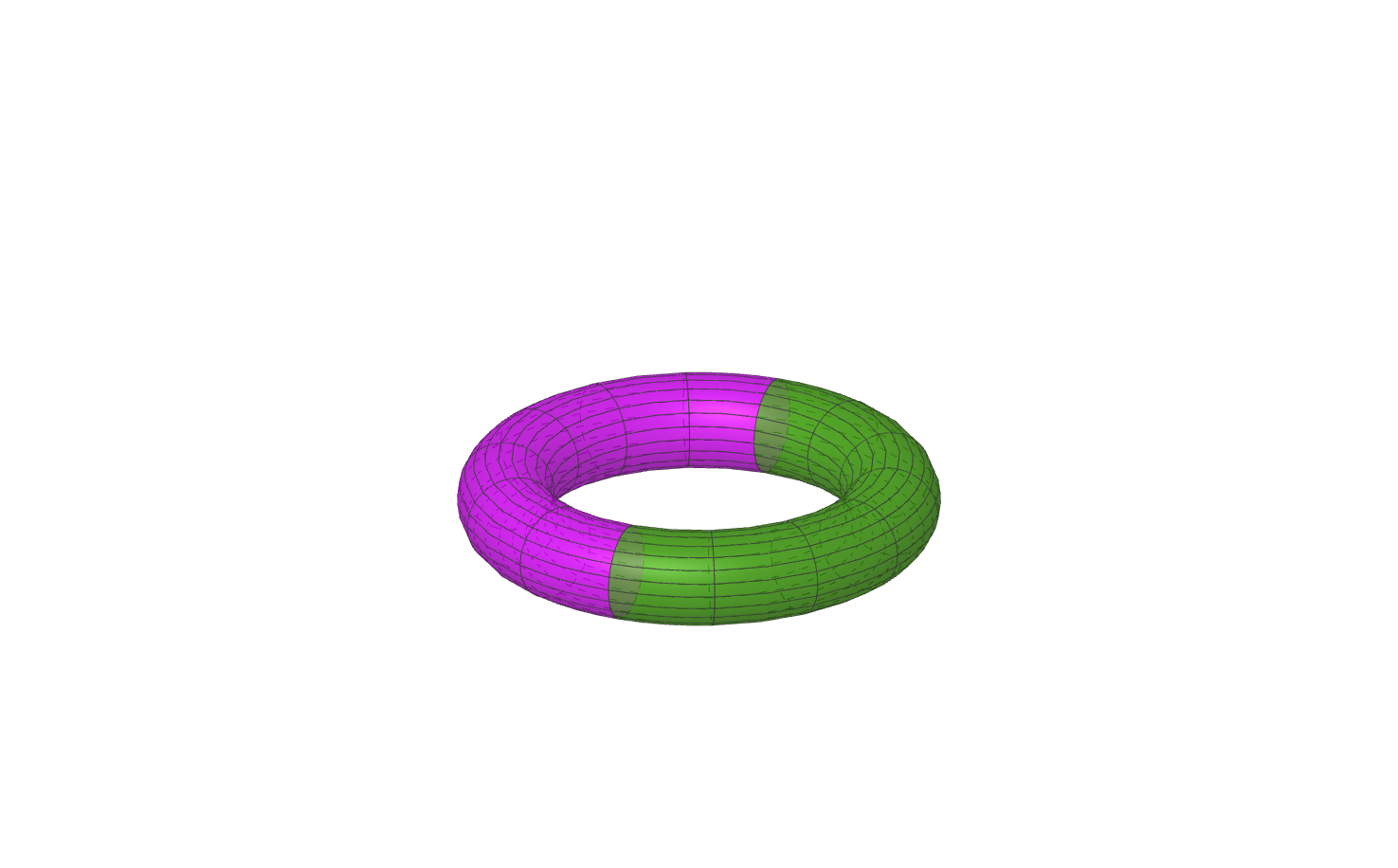}\label{fig:Torus}}
\caption{Examples of Cheeger sets that are (a) a strict mass constrained perimeter minimizer and (b) an integral mass constrained perimeter minimizer.}
\end{figure}


With these definitions in hand, we now state the isoperimetric stability result that we will utilize in this paper.

\begin{proposition}
\label{prop:Engelstein}
\cite[Lemma 3.4]{chodosh2019riemannian}
Suppose that $E^*\subset \M$ is a global, mass-constrained perimeter minimizer with mass $\vv$ and is both i) smooth (meaning it has empty singular set) and ii) either a strict perimeter minimizer or integrable.
Then, there exists constants $c,\delta >0$ such that for any measurable $E \subseteq \M$ with $\vol_\M(E) = \vv$ and $\vol_\M(E \Delta E^*) < \delta$ one has 
\begin{equation}
\label{eqn:isoper-stability}
\Per(E) - \Per(E^*) \geq c \alpha(E)^2.
\end{equation}
\end{proposition}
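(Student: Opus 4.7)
The plan is a two-scale reduction: first cut off the case of sets that are not $\Ck{2,\alpha}$-close to a minimizer (which will be easy because the asymmetry is then bounded away from zero, while the perimeter gap is large), and then on the remaining regime expand the perimeter around $E^*$ using the second variation. The structure of the argument is the classical Fuglede--Cicalese--Leonardi quantitative isoperimetric scheme, adapted to Riemannian manifolds as in \cite{chodosh2019riemannian}.

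I would first argue that it suffices to consider $E$ with $\Per(E) - \Per(E^*)$ smaller than some $\eps_0$: otherwise, since $\alpha(E) \le \vol_\M(E\ominus E^*) \le 2\vol_\M(\M)$ is a priori bounded, the inequality \eqref{eqn:isoper-stability} is trivial after shrinking $c$. In the small-energy regime, $E$ is an $(\omega,r_0)$-quasi-minimizer of perimeter under a volume constraint, and by the De Giorgi--Allard-type $\eps$-regularity theory for quasi-minimizers on smooth manifolds (see \cite{Maggi-Book} and the references in \cite{chodosh2019riemannian}), after possibly replacing $E^*$ by another element $\tilde E^*$ of the family of mass-constrained perimeter minimizers, we may write $\partial E$ as a normal graph over $\partial \tilde E^*$ with height function $f \in \Ck{2,\alpha}(\partial \tilde E^*)$ satisfying $\|f\|_{\Ck{2,\alpha}}\le \delta$, where $\delta \to 0$ as the perimeter deficit goes to zero. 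The volume constraint $\vol_\M(E)=\vol_\M(\tilde E^*)$ translates, via the area formula on a tubular neighborhood of $\partial \tilde E^*$, to $\int_{\partial \tilde E^*} f\, \dd\sigma_{g,\tilde E^*} = O(\|f\|_{\Lp 2}^2)$, so up to an $O(\|f\|_{\Lp 2}^2)$ adjustment $f$ lies in $\mathcal{B}_\delta(\partial \tilde E^*)$.

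I would then Taylor-expand the functional $t\mapsto \Per(\tilde E^* + tf)$ around $t=0$. Since $\tilde E^*$ is a mass-constrained perimeter minimizer, the first variation vanishes on mean-zero test functions, so
\[
\Per(E) - \Per(\tilde E^*) \;=\; \tfrac{1}{2}\, D^2\Per(\tilde E^*)[f,f] \;+\; O\l \|f\|_{\Ck{2,\alpha}} \|f\|_{\Wkp{1}{2}}^2 \r.
\]
In the strict case, the assumed Poincar\'e-type inequality $D^2\Per(\tilde E^*)[f,f]\ge c\|f\|_{\Wkp{1}{2}}^2$ absorbs the cubic error for $\delta$ small, giving $\Per(E)-\Per(\tilde E^*)\gtrsim \|f\|_{\Wkp{1}{2}}^2$. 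In the integrable case the quadratic form has a nontrivial kernel $K$ corresponding to the tangent directions of the smooth manifold of mass-constrained perimeter minimizers; by the implicit function theorem applied to the mean-curvature map, one can parametrize this manifold and, by choosing $\tilde E^*$ to be the nearest element of the family to $E$, arrange that $f$ is $\Lp 2$-orthogonal to $K$, on which $D^2 \Per(\tilde E^*)$ is again coercive with the same $\Wkp{1}{2}$ lower bound.

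Finally I would convert the $\Wkp{1}{2}$ bound to a Fraenkel asymmetry bound. The area formula on a geodesic tube gives $\vol_\M(E\ominus \tilde E^*) = \|f\|_{\Lp 1(\partial \tilde E^*)} + O(\|f\|_{\Lp 2}^2)$, and since $\tilde E^*$ lies in the family of perimeter minimizers of mass $\vv$, $\alpha(E)\le \vol_\M(E\ominus \tilde E^*)\le C\|f\|_{\Lp 1(\partial \tilde E^*)} \le C \|f\|_{\Wkp{1}{2}(\partial \tilde E^*)}$. Combining,
\[
\Per(E) - \Per(E^*) \;\ge\; c\, \|f\|_{\Wkp{1}{2}}^2 \;\ge\; c'\, \alpha(E)^2,
\]
as claimed. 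The main obstacle is the regularity/selection step: controlling the distance from $E$ to the \emph{family} of minimizers in a norm strong enough to run the Taylor expansion, and in the integrable case constructing the projection onto the kernel of the second variation so that the residual direction enjoys coercivity; the analytic ingredients here are the Allard $\Ck{2,\alpha}$-regularity of almost minimizers and the Lyapunov--Schmidt / implicit function construction of the manifold of critical points.
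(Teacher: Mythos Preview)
The paper does not prove this proposition: it is quoted directly from \cite[Lemma~3.4]{chodosh2019riemannian}, and the surrounding text (Example~\ref{ex:1D} and Section~\ref{subsec:problemsetup:IsoStabRev}) only gives a heuristic toy computation and literature pointers. So there is no ``paper's own proof'' to compare against; what one can compare is your sketch against the actual argument in \cite{chodosh2019riemannian}, which does follow the Cicalese--Leonardi selection principle plus a Fuglede-type second-variation expansion, as you correctly identify.

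That said, there is a genuine gap in your implementation of the selection step. You write that ``in the small-energy regime, $E$ is an $(\omega,r_0)$-quasi-minimizer of perimeter under a volume constraint'' and then invoke $\eps$-regularity to write $\partial E$ as a $\Ck{2,\alpha}$ normal graph over $\partial \tilde E^*$. This is false for a general competitor $E$: having small perimeter deficit $\Per(E)-\Per(E^*)$ does \emph{not} make $E$ a quasi-minimizer. A set of the form $E^* \cup B(p,r) \setminus B(q,r)$ with $p,q$ far from $\partial E^*$ has arbitrarily small deficit as $r\to 0$, yet its boundary is not a graph over $\partial E^*$ in any norm. The selection principle in \cite{Leonardi,chodosh2019riemannian} does not apply regularity to $E$ directly; instead one argues by contradiction, takes a sequence $E_k$ with vanishing ratio $(\Per(E_k)-\Per(E^*))/\alpha(E_k)^2$, and for each $k$ \emph{replaces} $E_k$ by a minimizer $F_k$ of a penalized functional of the type $\Per(F) + \Lambda\,|\alpha(F)-\alpha(E_k)|$ subject to the volume constraint. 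It is this penalized minimizer $F_k$ that is an $(\omega,r_0)$-quasi-minimizer, hence has $\Ck{2,\alpha}$ boundary graphical over $\partial \tilde E^*$, and on which your Taylor expansion and coercivity argument (which is otherwise correct, including the Lyapunov--Schmidt treatment of the integrable case) can be run. Without this replacement step the argument does not go through; you flag ``the regularity/selection step'' as the main obstacle at the end, but the body of the proof as written applies regularity to the wrong object.
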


One heuristic way of describing this result, in the spirit of the viewpoint given in~\cite{white1994strong}, is that the mass-constrained perimeter functional is strongly convex with respect to the $\Lp{1}$ norm.
We will utilize this result to show that any set which has small ``perimeter deficit'', i.e. which has perimeter close to a mass-constrained minimizer, must be close to a mass-constrained perimeter minimizer in the sense of Fraenkel distance, which is an $\Lp{1}$ type distance, with a clearly quantified estimate.
\add{ It is worth noting that the result in \cite{chodosh2019riemannian} is significantly more general than the one given here. Indeed, in their work they also prove that for analytic manifolds there always exists some exponent $\gamma_\M$, so that the previous proposition holds without assuming strict or integrable minimizers, under the modification that we replace $2$ with $\gamma_\M$. The analysis in our paper would continue to work in this more general setting, at the cost of now having an exponent that depends upon the manifold. For concreteness of presentation, we have opted to state the result in terms of a single exponent (i.e. $2$) which does not depend upon the manifold $\M$, but at the cost of making additional assumptions on the minimizing Cheeger set.}

\begin{example}
\label{ex:1D}
\add{To better understand how one derives the estimates, it is useful to consider a toy example. Consider a domain $S = (-1,1) \times (-M/2,M/2)$, with $M$ large, and let us consider the isoperimetric problem with mass constrained to be equal to $M$. In that case one perimeter minimizer is given by the set $E^* = \{(x,y) \in S \: : \:y<0, \quad x \in(-1,1)\}$. Now, consider a set $E = \{ (x,y) \in S \: : \: y \leq g(x), \quad x\in(-1,1)\}$ for some smooth $g$, which has mean zero and is compactly supported on $(-1,1)$.
We can then write
\[ \Per(E;S) - \Per(E^*;S) = \int_{-1}^1 \sqrt{1 + (g')^2} -1 \,\dd x. \]
The goal is to show that this quantity is greater than $c(\int |g|\,\dd x)^2$.}

Restricting to the situation where $g'$ is relatively small (which is the critical case for proving the inequality), and using a Taylor expansion, we find the bound:
\[ \Per(E) -\Per(E^*) \geq c \int_{-1}^1 (g')^2 \,\dd x. \]
In turn, using Poincare's and H\"older's inequality
\[ \Per(E) - \Per(E^*) \geq c\|g\|_{\Lp{2}}^2 \geq c\|g\|_{\Lp{1}}^2 = c\vol_\M(E \Delta E^*)^2. \]
\end{example}

The extension of the previous example to graphs of smooth functions, centered around smooth surfaces is entirely analogous.
The biggest technical challenge in establishing results such as Proposition~\ref{prop:Engelstein} is that one a-priori cannot reduce to the case where $E$ is locally expressed as the graph of a smooth function, see further discussion at the end of the section.
\vspace{\baselineskip}

In this work, we will be concerned with global minimizers of the continuum Cheeger energy, and hence with global mass-constrained perimeter minimizers.
Thus we will rely on the following, slightly modified version of the previous proposition

\begin{proposition}
\label{prop:global-perimeter-minimizers}
Fix a mass $0 < \vv < 1$.
Suppose that any global, mass-constrained perimeter minimizer with mass $\vv$ is both i) smooth (meaning it has empty singular set) and ii) either a strict perimeter minimizer or integrable.
Then, there exists a $c>0$ so that for any $\vol_\M(E) = \vv$ we have
\[ \Per(E) - \mathbb{I}(\vv) \geq c\alpha(E)^2. \]
\end{proposition}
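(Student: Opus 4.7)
The plan is to combine the local isoperimetric stability estimate of Proposition~\ref{prop:Engelstein} with a compactness argument for the set of all global minimizers. Let $\mathcal{S}_\vv := \{E^* \subset \M : \vol_\M(E^*) = \vv,\ \Per(E^*) = \Iso(\vv)\}$ denote the family of global mass-constrained perimeter minimizers with mass $\vv$, which is nonempty by the direct method. Standard $\BV$ compactness combined with the $\Lp{1}$ lower semicontinuity of the perimeter shows that $\mathcal{S}_\vv$ is compact in $\Lp{1}(\M)$ (viewed through characteristic functions). Note also that the triangle inequality gives $\alpha(E) \leq 2\vv \leq 2$ for every admissible $E$, so the desired quadratic bound is automatic whenever $\Per(E) - \Iso(\vv)$ is bounded away from zero.

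The heart of the argument is a single contradiction/compactness step. Suppose the proposition fails: then there is a sequence $E_n$ with $\vol_\M(E_n) = \vv$ and
\[ \frac{\Per(E_n) - \Iso(\vv)}{\alpha(E_n)^2} \longrightarrow 0. \]
Since $\alpha(E_n) \leq 2$, this forces $\Per(E_n) \to \Iso(\vv)$. By $\BV$ compactness, a subsequence satisfies $\one_{E_n} \to \one_{E^*}$ in $\Lp{1}$ for some $E^*$ with $\vol_\M(E^*) = \vv$; lower semicontinuity of the perimeter then yields $\Per(E^*) = \Iso(\vv)$, so $E^* \in \mathcal{S}_\vv$. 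By hypothesis, $E^*$ has empty singular set and is either strict or integrable, so Proposition~\ref{prop:Engelstein} applies at $E^*$, producing $c, \delta > 0$ such that $\Per(E) - \Iso(\vv) \geq c\,\alpha(E)^2$ whenever $\vol_\M(E) = \vv$ and $\vol_\M(E \ominus E^*) < \delta$. Since $\vol_\M(E_n \ominus E^*) \to 0$, this local estimate eventually applies to $E_n$ and contradicts the prescribed behavior of the ratio.

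From this contradiction I would extract the conclusion by splitting into near and far regimes. The argument above yields $\delta_0, c_0 > 0$ such that $\Per(E) - \Iso(\vv) \geq c_0 \alpha(E)^2$ whenever $\alpha(E) < \delta_0$. A parallel compactness argument, now only requiring $\Per(E_n) \to \Iso(\vv)$ along a sequence with $\alpha(E_n) \geq \delta_0$, produces $\gamma > 0$ with $\Per(E) - \Iso(\vv) \geq \gamma$ whenever $\alpha(E) \geq \delta_0$; using $\alpha(E) \leq 2$ this rewrites as $\Per(E) - \Iso(\vv) \geq (\gamma/4)\alpha(E)^2$. Taking $c := \min(c_0, \gamma/4)$ completes the proof.

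The main obstacle I anticipate is that the constants $c, \delta$ produced by Proposition~\ref{prop:Engelstein} depend a priori on the particular minimizer $E^*$, and $\mathcal{S}_\vv$ can contain an entire continuous family of minimizers (as in Example~\ref{ex:torus}); a naive attempt to cover $\mathcal{S}_\vv$ by local neighborhoods and patch the local estimates together would require uniform control of those constants. The compactness/contradiction scheme sidesteps this entirely: one only ever applies the local estimate at the single limit point $E^*$ of a near-minimizing sequence, and the existence of such a limit inside $\mathcal{S}_\vv$ is guaranteed by the $\Lp{1}$-closedness of $\mathcal{S}_\vv$ under perimeter-decreasing sequences.
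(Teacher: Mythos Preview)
Your approach is essentially identical to the paper's: assume the estimate fails, extract a sequence with vanishing ratio, use $\BV$ compactness and lower semicontinuity of perimeter to produce a limiting global minimizer $E^*$, and then invoke Proposition~\ref{prop:Engelstein} at $E^*$ to contradict the assumed decay of the ratio. Your third paragraph (the near/far split) is superfluous---once the contradiction argument in your second paragraph goes through, the proposition is proved outright, since its failure was exactly the hypothesis you contradicted; the paper's proof stops there.
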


\begin{proof}
Suppose, for the sake of contradiction, that no such $c$ exists.
Then there exists a sequence of sets $E_k$ satisfying $\vol_\M(E_k)=\vv$ so that
\[ \frac{\Per(E_k) - \mathbb{I}(\vv)}{\alpha(E_k)^2} \to 0. \]
By $\Lp{1}$ compactness of sets of finite perimeter, after taking a subsequence (not relabeled) we have that there exists a set $\tilde E$ satisfying $\vol_\M(E_k \Delta \tilde E) \to 0$ and $\mathbb{I}(\vv) = \liminf_k \Per(E_k) \geq \Per(\tilde E) \geq \mathbb{I}(\vv)$.
This implies that $\tilde E$ is a mass-constrained perimeter minimizer, and hence satisfies the assumptions of Proposition~\ref{prop:Engelstein}.
But this then implies that, for $k$ large enough, $\frac{\Per(E_k) - \mathbb{I}(\vv)}{\alpha(E_k)^2} > c>0$, where $c$ is chosen as in that proposition.
This is a contradiction, and concludes the proof.
\end{proof}

\subsubsection{Convergence Rates for Cheeger Cuts: Results}
\label{subsubsec:problemsetup:CheegerCutsRes}

With tools from the previous section in hand, we now state a final main result of our work.
Before we do so, we state in detail the technical assumptions that we make on minimizers of the continuum Cheeger problem (discussion of these assumptions is given after the statement of the theorem).

\begin{assumption}
\label{assumption:Cheeger-sets}
(Assumptions on Cheeger sets of $\M$)
\begin{enumerate}
\item (Local strong convexity of isoperimetric \rem{function} \add{profile}) First, we define $\capV = \argmin \frac{\mathbb{I}(\vv)}{\min(\vv,1-\vv)}$.
We assume that for every $\vv \in \capV$ there exists a $C>0$ and an $\eta$ so that for any $\vv' \in (\vv-\eta,\vv+\eta)$ we have that the function $g(\vv) = \frac{\mathbb{I}(\vv)}{\min(\vv,1-\vv)}$ satisfies $g(\vv') \geq g(\vv) + C(\vv-\vv')^2$.
\item Second, we assume that all of the minimizers of the mass constrained isoperimetric problem with masses in $\capV$ satisfy all of the assumptions in Proposition~\ref{prop:global-perimeter-minimizers}, meaning that they are at least $\Ck{3}$ and are either strict or integrable.
\end{enumerate}
\end{assumption}

\add{As in the case of the assumptions in Proposition \ref{prop:Engelstein}, it is likely possible to weaken Assumption \ref{assumption:Cheeger-sets} i) as long as one is willing to permit the exponent in the results to depend upon the manifold. In particular, if the function $\frac{\mathbb{I}(\vv)}{\min(\vv,1-\vv)}$ can be bounded from below near its minimizers using a different exponent, then all of the analysis that we give here would continue to apply using that exponent. We conjecture that for any analytic manifold Assumption \ref{assumption:Cheeger-sets} i) holds as long as one is willing to replace $2$ with a manifold dependent exponent. Again, as in the case of Proposition \ref{prop:Engelstein}, we have opted to work in terms of a concrete (but natural), manifold independent exponent, at the cost of making additional assumptions on the manifold.}

Using these assumptions we will then establish our main result regarding the convergence of Cheeger sets.

\begin{theorem}
\label{thm:rates-Cheeger-Cuts}
(Asymptotic consistency of Cheeger cuts, with rates)
\rem{Suppose that Assumptions \ref{assumptions} and \ref{assumption:Cheeger-sets} hold, and suppose that $\delta,\theta,\zeta>0$ are small enough quantities and in particular $\delta \leq \veps/4$.
Then, there are constants $C_1, C_2, C,c>0$ (that depend on $\M$) such that with probability at least $1- n \exp(-cn\theta^2\delta^m)-C \exp\left(-cn\zeta \e^{\frac{m+1}{2}}\right)$ for any minimizer $E_n^*$ of the discrete Cheeger energy there exists a minimizer $E^*$ of the continuum Cheeger problem and a map $T_n$ from $\M$ to $\M_n$ which satisfy }
\add{Let $\M$ and $\veps$ satisfy Assumptions~\ref{assumptions} and $\M$ satisfy Assumptions~\ref{assumption:Cheeger-sets}.
Then, there exists constants (that may depend on $\M$) $\theta_0,\zeta_0,C_1,C_2,C,c,c'>0$ such that for any $\delta,\theta,\zeta>0$, with $n\zeta\eps^{\frac{m+1}{2}}\geq c$, $\delta\leq \frac{\veps}{4}$ , $ c' \log(n)/n \leq \theta^2 \delta^m $, $\theta\leq \theta_0$ and $\zeta\leq \zeta_0$, we have} \addii{that}\add{, with probability at least $1- n \exp(-cn\theta^2\delta^m)-C \exp\left(-cn\zeta \e^{\frac{m+1}{2}}\right)$, for any minimizer $E_n^*$ of the discrete Cheeger energy there exists a minimizer $E^*$ of the continuum Cheeger problem and a map $T_n$ from $\M$ to $\M_n$ which satisfy}
\begin{equation}
\label{eqn:Cheeger-estimate}
\lVert \one_{E_n^*} \circ T_n  - \mathds{1}_{E^*} \rVert_{\Lp{1}(\M)} \leq C\kappa^{\frac{m-1}{4m}}, \qquad \|T_n - \Id\|_{\Lp{\infty}(\M)} \leq \delta,
\end{equation}
where
\[ \kappa := \sqrt[6]{\eps} + \frac{\delta}{\e} + \theta + \zeta. \]
\end{theorem}

\begin{remark}
Again, here we are permitted to make choices of the various parameters in order to determine convergence rates as a function of $n$.
In particular, if we pick our parameters as in Remark~\ref{rem:Cheeger-rates} we get that, with high probability and after neglecting logarithms, $\kappa = n^{-\frac{1}{4(1+2m)}}$.
This in turn gives an $\Lp{1}$ convergence rate of the Cheeger cuts of order $n^{-\frac{(m-1)}{16m(1+2m)}}$.
\end{remark}

\begin{remark}
As for our rates established for the Cheeger constant, we believe that the rate of convergence for Cheeger cuts we achieve are not optimal.
If, as in Remark~\ref{rem:CheegerConsRateOpt}, we compare with the eigenvalue problem~\eqref{eq:CFWEigenPair} then~\cite{garciatrillos19} establishes a convergence rate of $n^{-\frac{1}{4m}}$ (ignoring logarithms) for eigenvectors of the graph Laplacian, to be compared with an approximate rate of $n^{-\frac{1}{32m}}$ we establish in Theorem~\ref{thm:rates-Cheeger-Cuts}.
In particular, we have likely given up something in terms of rates in several ways when applying the mass-constrained isoperimetric stability results (see Remark \ref{rem:not-sharp}). 
We do note that the stability estimates for isoperimetric inequalities are, in terms of their exponent, known to be sharp.
In any case, demonstrating sharpness of convergence rates, given the highly non-linear nature of the optimization problem that we analyze, would require completely different tools.
\end{remark}

\begin{remark}
\label{rem:OtherConv1}
A different type of convergence for minimizers can be deduced from Theorem \ref{thm:rates-Cheeger-Cuts}, namely, for $E^*$ in \eqref{eqn:Cheeger-estimate} with smooth boundary we can prove that (with probability at least $1- n \exp(-cn\theta^2\delta^m)-C \exp\left(-cn\zeta \e^{\frac{m+1}{2}}\right)$):
\[  \nu_n( E^* \Delta E_n^* ) \leq   C \kappa^{\frac{m-1}{4m}} + C \delta. \]
We show in Remark \ref{rem:OtherConv2} how this inequality can be obtained from the analysis we develop for our proof of Theorem \ref{thm:rates-Cheeger-Cuts}.	
\end{remark}

We make a few remarks about the statement of Theorem \ref{thm:rates-Cheeger-Cuts}. 
The first condition in Assumption~\ref{assumption:Cheeger-sets} is to guarantee that minimizers of the Cheeger problem are in some sense strict minimizers, at least with respect to variations in volume.
We remark that this condition, along with well-known asymptotics~\cite{morgan2000some} for the isoperimetric \rem{function} \add{profile} near zero and compactness of the perimeter functional, imply that $\capV$ contains at most finitely many elements. The assumption that minimizers have smooth boundary is natural and there are reasons to believe that this holds for all minimizers in certain classes of manifolds (see the discussion below). 

We notice that Theorem \ref{thm:rates-Cheeger-Cuts} only proves closeness to some Cheeger set $E^*$: the case of the torus in Example~\ref{ex:torus} makes it readily clear why this is necessary.
We also remark that although the theorem was stated in terms of the global minimizer of the discrete energy, we could prove analogous bounds for approximate minimizers of the discrete Cheeger energy.
More concretely, if $J_n(E_n):= \frac{\GTV_{n\eps}(\one_{E_n})}{\min\{\nu_n(E_n),1-\nu_n(E_n)\}}$ and we knew that $J_n(E_n) - \inf J_n < \gamma_n \to 0$, then we could provide an analogous estimate with a right hand side that depends upon $\gamma_n$.

In this paper we have focused exclusively on proofs for the Cheeger problem.
However, the techniques are extendable, \textit{mutatis mutandis}, to other regularized cut problems such as the ratio cut in~\eqref{eq:RatioCut} and graph modularity clustering in \eqref{def:ModularityClust}.
In particular, condition i) in Assumptions \ref{assumption:Cheeger-sets} would need to be appropriately adjusted to each cut problem.

\subsection{Discussion of Literature for Isoperimetric Stability Problems}
\label{subsec:problemsetup:IsoStabRev}

There is a long history of the study of the stability of isoperimetric problems.
Early works established these types of inequalities in the case of smooth sets in $\R^2$~\cite{bonnesen1924isoperimetrische}, or for smooth sets in $\R^d$~\cite{hall1992quantitative}.
Another important early work was~\cite{white1994strong}, which proposed the principle that any set which is stable in terms of second variation of the (mass-constrained) perimeter should be a local minimizer of the mass-constrained isoperimetric problem in a local, $\Lp{1}$ sense.
However, their argument did not provide quantified estimates of this minimality.
In the last ten years, many works have sought to quantify this relationship.
The first of these works,~\cite{fusco2008sharp} established the quantified relationship in the case of the ball in $\R^d$.
A crucial element of their proof was the observation that by symmetrizing the set of interest (in the sense of a Steiner symmetrization) one can reduce the perimeter of the set without increasing its Fraenkel asymmetry by too much.
After symmetrizing several times, one can reduce to the type of one-dimensional estimate that we presented in Example~\ref{ex:1D}.
A later, elegant alternative proof for the ball was obtained by using techniques from optimal transportation~\cite{figalli2010mass}.

Isoperimetric problems in $\R^d$ turn out to be central to a number of problems in ``hard analysis''.
In particular, these estimates have been used to provide quantitative estimates of the stability of Sobolev~\cite{cianchi2009sharp}, Polya-Szeg\H{o}~\cite{cianchi2008quantitative} and other functional inequalities~\cite{fusco2009stability}.

The extension of the stability results for isoperimetric problems on $\R^d$ to more general settings is the focus of current research.
These problems are often quite challenging, because the loss of explicit minimizers and symmetries renders many of the technical tools unusable.
However, an alternative approach to stability for the classical isoperimetric problem is given \rm{in~\cite{cicalese12}.} 
There they study the quotient $\frac{\Per(E)-\Per(B)}{\alpha(E)^2}$, and demonstrate that for a fixed value of the denominator, the minimizer of the numerator is very well behaved (i.e. regular, and locally expressible as the graph of a function).
The procedure of selecting minimizers of this quotient, and then studying their properties is known as a \emph{selection principle}, and allows one to reduce the study of isoperimetric stability to the study of relatively simple sets.
Technically, this reduction is accomplished by studying penalized variational problems, and using the regularity theory of minimizers of penalized isoperimetric problems~\cite{tamaninni1984regularity}.
Using this reduction, one can then establish the quantitative isoperimetric inequality using standard techniques~\cite{fuglede1989stability}, which are very similar to those we described in Example~\ref{ex:1D}.
The recent work~\cite{chodosh2019riemannian}, which we rely on in this paper, extends the techniques \rm{in~\cite{cicalese12}} 
and~\cite{fuglede1989stability} to the setting of Riemannian manifolds.

\add{Very recently \cite{cicalese2020maximal} utilizes similar ideas as the second part of this paper within the context of discrete to continuum bounds for crystallization energies. In particular, they study an (anisotropic) analog of the perimeter on certain classes of periodic lattices. In their work they use stability of the limiting anisotropic isoperimetric problem, proven in \cite{figalli2010mass} and analogous to \cite{chodosh2019riemannian} in our setting, in order to bound a certain distance between discrete configurations and the continuum \addii{wulff} set, in terms of sort of energy difference. This is directly analogous to the work that we do in Section \ref{sec:CheegerCutsProof}, but in the context of a different energy} \add{as well as different motivation}.

Central to many of these works is the question of regularity of minimizers for isoperimetric problems.
The study of this problem is very classical.
Necessary conditions for the mass-constrained isoperimetric problem in $\R^d$ require that, if solutions are smooth, they must be surfaces of constant mean curvature which are $\Ck{\infty}$ up to a set of measure $d-8$~\cite{GMT83,Gruter}.
There are examples of surfaces which possess constant mean curvature everywhere except on a set of dimension $d-8$~\cite{simons1968minimal}. \add{In the context of isoperimetric problems on convex domains there are recent conjectures, which posit that such surfaces cannot be minimizers of the isoperimetric problem~\cite{jerison2019two}. This conjecture is informed by earlier work in the context of convex domains~\cite{SternbergZumbrunIsoPer}, which strongly suggests that there are topological obstructions which prevent the example in~\cite{simons1968minimal} from being a global perimeter minimizer. Of course convex domains are in a sense much simpler than the manifolds we consider in this paper, for which, to our knowledge, very little is known about whether global minimizers to isoperimetric problems can be guaranteed to be regular. This then means that we can only be certain that} \add{the assumptions in Theorem \ref{thm:rates-Cheeger-Cuts} are met, in general, in the case where $\M$ is of small dimension.}

In summary, there is a rich mathematical theory studying the minimizers of isoperimetric problems, and the stability of such minimizers.
Our paper draws on this theory to provide new quantitative tools for the asymptotic consistency of statistical problems based upon geometric optimization.


  

\section{Quantitative Estimates for Non-Local Operators}
\label{sec:nonlocal}

In this section we establish some results concerning the non-local TV seminorm~\eqref{eqn:NonLocalTvseminorm}.
These results are extensions to the manifold case of somewhat well known results in the flat Euclidean case.
The non-local TV seminorm is analogous to the non-local Dirichlet energy studied in~\cite{BuragoIvanovKurylev}.
In form, these two functionals differ only in the powers used for the integrand $|f(x) - f(y)|$ and in their rescaling factors, but their properties are markedly different.
The study of non-local Dirichlet energies was a fundamental piece in~\cite{BuragoIvanovKurylev,garciatrillos19} to obtain quantitative rates for the spectral convergence of graph Laplacians.

To fix some ideas, we first introduce a collection of definitions from differential geometry that will provide us with the right language to formalize our arguments.
For the most part the notions introduced below will suffice for proving most of our estimates: the only exception is Proposition~\ref{prop:ConvolutionMonotone} where more tools are needed.
These will be presented in the Appendix. 

For a given $x \in \M$ we use $\exp_x$ to denote the \emph{exponential map} at $x$.
This is a diffeomorphism between the set $B(0,h) \subseteq \T_x\M$ and $B_\M(x,h)$ as long as $h \leq i_\M$ (where we recall $i_\M$ is $\M$'s injectivity radius), and is characterized as follows: for $v\in B(0,h) \subseteq \T_x\M$, the curve $t \in [0,1] \mapsto \exp_x(tv)$ describes the constant speed geodesic emanating from $x$ with initial velocity $v$.
We use $\dd (\exp_x)_v$ to denote the differential of $\exp_x$ at $v \in B(0, h) \subseteq \T_x\M$, which maps tangent vectors to $\T_x\M$ at $v$ to tangent vectors to $\M$ at $\exp_x(v)$.
Implicit in this definition of exponential map is the choice of \textit{connection} or \textit{covariant derivative} which here we take it to be the Levi-Civita connection; recall that the metric on $\M$ is the one inherited from the ambient space $\R^d$.
We use $J_x(v)$ to denote the \textit{Jacobian} of the exponential map at the point $v$.
The Jacobian describes the \add{Riemannian} volume form of the manifold $\M$ (which we will denote by $\vol_\M$) in the local coordinates of the exponential map (a.k.a. normal coordinates).
That is, we can write integrals of the form
\[ \int_{B_\M(x, h)} \zeta(y) \, \dd\vol_\M(y) \]
(for small enough $h$) as
\[ \int_{B(0, h) \subseteq \T_x\M} \zeta(\exp_x(v))J_x(v) \, \dd v. \]
According to~\cite{BuragoBook,BuragoIvanovKurylev}, the Jacobian $J_x$ satisfies the following bounds 
\begin{equation}
\label{eqn:Jacobian}
1- C |v|^2 \leq J_x(v) \leq 1+ C |v|^2,  
\end{equation}
for all $v\in B(0, h) \subseteq \T_x\M$, where $h$ is sufficiently small, and in particular smaller than $i_\M$.
The constant $C$ can be written as $C = cm K$, where $c$ is a universal constant, $m$ is the dimension of $\M$, and $K$ is an upper bound on the absolute value of all sectional curvatures of $\M$ at all points within the ball $B_\M(x,h)$.
Since we are assuming $\M$ to be compact, this constant can be picked to uniformly bound the discrepancy between the volume form (in normal coordinates) and the uniform measure.
In the remainder we will write $h \leq h_\M$ to indicate that $h$ is small enough (smaller than a fixed quantity that only depends on $\M$).
We define the tangent bundle by
\[ \T\M := \lb (x,v) \: : \: x \in \M , \quad v \in \T_x\M \rb. \]
Recall that we wrote the non-local $\TV$ seminorm $\TV_h$ as a double integration over $x\in\M$ and $y\in\M$, where $x$ is close to $y$, see~\eqref{eqn:NonLocalTvseminorm}.
In Euclidean settings we can use a change of variables and integrate over $x$ and $v=\frac{x-y}{h}$.
With this change of coordinates the non-local functional converges easily to a local functional.
On a manifold we can't immediately make the same transformation as we need to be careful where $v$ lives.
However, this is really just a technical detail and (formally) by viewing $v$ as a tangent vector in $\T_x\M$ we can again view the double integration as a integral over $x\in\M$ and $v\in\T_x\M$.
This leads us to requiring the volume form on $\T\M$, i.e. $\dd \vol_{\T\M}(x,v)$.
We give some background on how one rigorously defines $\dd \vol_{\T\M}$ in the appendix where it is needed for a more technical computation, but within the main body it is enough to understand that $\int_{\T\M}\zeta(\xi)\,\dd\vol_{\T\M}(\xi) $ can be written as
\[ \int_{\T\M}\zeta(x,v)\,\dd\vol_{\T\M}(x,v) = \int_\M \int_{B(0, h) \subseteq \T_x\M} \zeta(x,v)\, \dd v \dd\vol_\M(x) \]
for any $\zeta:\T\M\to \R$ with the property that $\zeta(x,\cdot)$ has compact support in $B(0,h_\M)$ for every $x\in\M$.


We will use the \emph{geodesic flow} $\Phi= \{ \Phi_s \}_{s \in [0,1]}$ which takes an arbitrary point 
\[(x,v) \in \mathcal{B} := \{ (x,v)\: : \: x \in \M, \quad v\in \T_x \M , \quad |v|_x < i_\M \}, \]
into the point
\[ \Phi_s(x,v):= (\Phi^1_s(x,v), \Phi_s^2(x,v))\]
where
\[ \Phi_s^1(x,v):= \exp_x(sv), \quad \Phi_s^2(x,v):= \dd(\exp_x)_{sv}(v). \]
It can be checked that for every $s\in [0,1]$, $\Phi_s$ is a diffeomorphism of $\mathcal{B}$ into itself. Moreover, the geodesic flow leaves $\vol_{\T \M}$ invariant: this is the content of Liouville's theorem (see Chapter 3 in \cite{docarmo1992riemannian}). 

In the sequel we may use $\xi=(x,v)$ to represent a generic point in the tangent bundle $\T\M$ and abuse notation slightly to write things like $ g(\xi)=g(x)$ whenever $g$ is a real valued function on $\M$.
We will also use $\dd g$ (i.e. the differential of $g$) which is the 1-form that when acting on a tangent vector $v$ returns the directional derivative of $g$ in the direction $v$, and will write things like $\dd g(\xi)$ to denote the directional derivative of $g$ at the point $x$ in the direction $v$. If a function $g\in\Ck{1}(\M)$ then we understand it to be differentiable in the sense of Fr\'echet, in particular, at every $x\in \M$ there exists $\nabla g(x)\in \T_x\M$ such that $\dd g(v)(x) = \langle \nabla g(x), v\rangle_x$ for all $v\in \T_x\M$.

With all the above definitions in hand, we are ready to state and prove our first auxiliary results.

\begin{proposition}
\label{prop:biasineq}
There is a constant $C$ such that for all $f \in \BV$\addii{$(\M)$} \add{(i.e. $f \in \Lp{1}(\M)$ and $\TV(f) < \infty$)} and all $0<h\leq h_\M$  we have 
\[ \TV_{h}(f) \leq (1+ Ch^2)\sigma_\eta \TV(f), \]
where $\sigma_\eta$ is the surface tension defined in~\eqref{def:sigmaeta}.
\end{proposition}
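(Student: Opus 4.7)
The plan is to prove the bound first for $f \in \Ck{1}(\M)$, using the integral representation $\TV(f) = \int_\M |\nabla f(x)|_x\,\dd\vol_\M(x)$ from~\eqref{eqn:smoothf}, and then to extend it to general $f \in \BV(\M)$ by density: smooth functions are strictly dense in $\BV(\M)$ (convergence in $\Lp{1}$ together with convergence of the total variation), and $\TV_h$ is lower semicontinuous under $\Lp{1}$ convergence by Fatou's lemma. Hence the inequality for smooth $f$ passes to the $\BV$ case.

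For smooth $f$ and fixed $x \in \M$, I first pass to normal coordinates at $x$: writing $y = \exp_x(v)$ with $v \in B(0,h) \subseteq \T_x\M$, then decomposing $v = r\omega$ with $r \in [0,h]$ and $\omega \in S^{m-1} \subseteq \T_x\M$, and inserting the Jacobian bound $J_x(v) \leq 1 + Ch^2$ from~\eqref{eqn:Jacobian}, I obtain
\[ \TV_h(f) \leq \frac{1+Ch^2}{h^{m+1}} \int_\M \int_0^h \int_{S^{m-1}\subset \T_x\M} \bigl|f(\exp_x(r\omega))-f(x)\bigr|\, \eta(r/h)\, r^{m-1}\,\dd\omega\,\dd r\,\dd\vol_\M(x). \]
By Gauss's lemma, the velocity $\dd(\exp_x)_{s\omega}(\omega)$ of the geodesic $s \mapsto \exp_x(s\omega)$ is a unit vector at each point, so the fundamental theorem of calculus gives the pointwise bound
\[ \bigl|f(\exp_x(r\omega)) - f(x)\bigr| \leq \int_0^r \bigl|\langle \nabla f(\exp_x(s\omega)),\, \dd(\exp_x)_{s\omega}(\omega)\rangle_{\exp_x(s\omega)}\bigr|\,\dd s. \]

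The crux of the argument is then to apply Liouville's theorem. Since the geodesic flow preserves $\vol_{\T\M}$ and the fiber norm $|v|$, it preserves the induced measure $\dd\sigma(\omega)\,\dd\vol_\M(x)$ on each sphere bundle; thus for each fixed $s \in [0, h_\M]$,
\[ \int_\M \int_{S^{m-1}\subset \T_x\M} \bigl|\langle \nabla f(\exp_x(s\omega)),\, \dd(\exp_x)_{s\omega}(\omega)\rangle\bigr|\,\dd\omega\,\dd\vol_\M(x) = \int_\M \int_{S^{m-1}\subset \T_y\M} \bigl|\langle \nabla f(y),\omega'\rangle\bigr|\,\dd\omega'\,\dd\vol_\M(y). \]
A rotational-symmetry computation on the unit sphere yields $\int_{S^{m-1}}|\langle w,\omega\rangle|\,\dd\sigma(\omega) = c_m |w|$ with $c_m := \int_{S^{m-1}}|\omega_1|\,\dd\sigma$, so the right-hand side equals $c_m\TV(f)$. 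After plugging this back into the bound for $\TV_h(f)$, performing the $s$-integration to produce a factor of $r$, and substituting $r=ht$ in the outer integral, the overall coefficient is $c_m \int_0^1 \eta(t) t^m\,\dd t$. By polar coordinates in $\R^m$, this coincides exactly with $\sigma_\eta = \int_{\R^m}\eta(|z|)|z_1|\,\dd z$ as defined in~\eqref{def:sigmaeta}. Combining everything yields the claimed bound $\TV_h(f) \leq (1+Ch^2)\sigma_\eta \TV(f)$.

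The main technical obstacle is the rigorous application of Liouville's theorem and the uniform control of constants. One must verify that for $s \leq h \leq h_\M$ the geodesic flow stays inside the region where the exponential map is a diffeomorphism (ensured by $h_\M \leq i_\M$), and that the constant $C$ in~\eqref{eqn:Jacobian} can be chosen uniformly over $\M$, which follows from compactness and a uniform sectional curvature bound. All of the curvature and distortion effects are then packaged into the harmless $1+Ch^2$ factor; on flat $\R^m$ the argument would produce an equality (up to boundary effects).
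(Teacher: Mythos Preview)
Your proof is correct and follows essentially the same approach as the paper: reduce to smooth $f$ by density, pass to normal coordinates and use the Jacobian bound~\eqref{eqn:Jacobian}, apply the fundamental theorem of calculus along geodesics, invoke Liouville's theorem for the geodesic flow, and finish with a rotational-symmetry computation to produce $\sigma_\eta$. The only cosmetic difference is that you work in polar coordinates on the unit sphere bundle whereas the paper integrates directly over $v \in B(0,h) \subset \T_x\M$ and parametrizes the geodesic by $t \in [0,1]$; the two computations are equivalent reparametrizations of the same argument.
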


\begin{proof}
Using the density of smooth functions in $\Lp{1}(\M)$ and an approximating result for the $\TV$ seminorm like that in Theorem 13.9 in \cite{Leoni} (see also Theorem 2.4 in \cite{ambrosio2015bv}), it is enough to show the result for $f \in \Ck{\infty}(\M)$.
Let $x\in \M$ and $y \in B_\M(x,h)$.
Then, there is a unique $v \in B(0,h) \subseteq \T_x\M$ such that $\exp_x(v)=y$.
By the fundamental theorem of calculus we can write
\[ f(y) - f(x) = f(\exp_x(v))- f(x)= \int_{0}^1 \frac{\dd}{\dd t}f(\exp_x(tv))\,\dd t \]
and so 
\begin{align*}
\begin{split}
\int_{B_\M(x, h )} |f(y)- f(x)| \, \dd \vol_\M(y) & = \int_{B(0,h) \subseteq \T_x\M} |f(\exp_x(v))- f(x)| J_x(v) \, \dd v \\
& \leq (1+ C h^2) \int_{B(0,h) \subseteq T_x \M} \int_{0}^1 |\dd f(\Phi_t(x,v))| \, \dd t \, \dd v,
\end{split}  
\end{align*}
where we have used~\eqref{eqn:Jacobian} to bound the Jacobian.
It follows that
\begin{align*}
\begin{split}
\int_\M \int_{B_\M(x, h)} |f(y)- f(x)| \, \dd \vol_\M(y) \, \dd \vol_\M(x) \leq (1+ Ch^2) \int_{0}^1 \int_{\mathcal{B}_h } |\dd f(\Phi_t(\xi))| \, \dd\vol_{T\M}(\xi) \, \dd t
\end{split}
\end{align*}
where 
\[ \mathcal{B}_h := \{ (x,v) \in \T\M \: : \: x \in \M , \quad v\in B(0,h) \subseteq \T_x\M \}. \]
From the fact that the geodesic flow leaves $\vol_{\T\M}$ invariant, and the fact that $\Phi_t(\mathcal{B}_h) = \mathcal{B}_h$, it follows after a change of variables that for all $t \in (0,1)$ 
\begin{align*}
\int_{\mathcal{B}_h } |\dd f(\Phi_t(\xi))| \, \dd\vol_{\T\M}(\xi) & = \int_{\mathcal{B}_h} |\dd f(\xi)| \, \dd\vol_{\T\M}(\xi) \\
& = \int_\M\int_{B(0,h) \subseteq \T_x\M} |\langle \nabla f(x),v\rangle_x| \, \dd v \, \dd \vol_\M(x) \\ 
& = \int_{\M} |\nabla f(x)|_x \int_{B(0,h) \subseteq \T_x\M} \left|\left\langle \frac{\nabla f(x)}{|\nabla f(x)|_x},v \right\rangle_x\right| \, \dd v \, \dd \vol_\M(x) \\
& = \int_\M \sigma_\eta |\nabla f(x)|_x \, \dd \vol_\M(x),
\end{align*}
where in the last line we have used the radial symmetry of the integrand and the definition of $\sigma_\eta$ in~\eqref{def:sigmaeta}. From the above it follows that
\begin{align*}
\int_\M \int_{B_\M(x,h)} |f(y)- f(x)| \, \dd\vol_\M(y) \, \dd \vol_\M(x) \leq (1+Ch^2)\sigma_\eta \int_{\M}|\nabla f(x)|_x \, \dd\vol_\M(x). 
\end{align*}
Using \eqref{eqn:smoothf} (and the fact that $f$ is smooth) we deduce the desired inequality.
\end{proof}

The next result is a somewhat converse to the previous one, but is restricted to smooth enough functions.

\begin{proposition}
\label{prop:NonLocalLocalRegu}
(Non-local vs local for smooth functions)
Let $0 < h\leq h_\M$ and let $f$ be a \add{$\Ck{1,1}(\M)$} function. 
Then, 
\[ \sigma_\eta \TV(f) \leq (1+Ch^2)\TV_h(f) + C\lVert f \rVert_{\Ck{1,1}(\M)} h, \]
where $C$ is independent of $f$ or $h$.
\end{proposition}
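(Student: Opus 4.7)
The plan is to reverse the direction of Proposition~\ref{prop:biasineq}, exploiting the $\Ck{2}$ regularity of $f$ to linearize the difference $f(y)-f(x)$ and pick up a lower bound on $\TV_h(f)$ in terms of $\TV(f)$. The strategy mirrors the previous proof step-by-step, but with the triangle inequality reversed and with an explicit Taylor remainder.

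First I would parametrize $y \in B_\M(x,h)$ by $y = \exp_x(v)$ for $v \in B(0,h) \subseteq \T_x\M$ and apply the change of variables formula, picking up the Jacobian factor $J_x(v)$. Along the geodesic $\gamma(t) = \exp_x(tv)$ one has $\tfrac{\dd}{\dd t} f(\gamma(t))\big|_{t=0} = \langle \nabla f(x), v\rangle_x$ and $|\tfrac{\dd^2}{\dd t^2} f(\gamma(t))| = |\nabla^2 f(\dot\gamma,\dot\gamma)| \leq C \|f\|_{\Ck{2}(\M)} |v|^2$, where the covariant Hessian is controlled by the $\Ck{2}$ norm with a constant depending only on $\M$. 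Taylor's theorem thus yields
\[ f(\exp_x(v)) - f(x) = \langle \nabla f(x), v\rangle_x + R(x,v), \qquad |R(x,v)| \leq C \|f\|_{\Ck{2}(\M)} |v|^2. \]
Applying the reverse triangle inequality gives the pointwise lower bound $|f(\exp_x(v)) - f(x)| \geq |\langle \nabla f(x), v\rangle_x| - C \|f\|_{\Ck{2}(\M)} |v|^2$.

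Next I would integrate this inequality against $\dd v$ over $B(0,h) \subseteq \T_x\M$, using the lower bound $J_x(v) \geq 1 - C|v|^2$ from \eqref{eqn:Jacobian}. By the same radial-symmetry calculation used in the proof of Proposition~\ref{prop:biasineq}, the leading term yields
\[ \int_{B(0,h)\subseteq \T_x\M} |\langle \nabla f(x), v\rangle_x|\,\dd v = \sigma_\eta \, h^{m+1}\, |\nabla f(x)|_x, \]
after the substitution $v = hz$ and using the definition of $\sigma_\eta$ in~\eqref{def:sigmaeta}. The remainder is controlled by $C \|f\|_{\Ck{2}(\M)} \int_{B(0,h)} |v|^2 \,\dd v \leq C \|f\|_{\Ck{2}(\M)} h^{m+2}$. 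Combining these, one obtains
\[ \int_{B_\M(x,h)} |f(y) - f(x)|\,\dd\vol_\M(y) \geq (1 - Ch^2) \sigma_\eta \, h^{m+1}\, |\nabla f(x)|_x - C \|f\|_{\Ck{2}(\M)} h^{m+2}. \]

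Finally, I would integrate over $x \in \M$, divide by $h^{m+1}$, and use \eqref{eqn:smoothf} to recognize the leading term as $(1 - Ch^2)\sigma_\eta \TV(f)$, while the remainder contributes $C \|f\|_{\Ck{2}(\M)} h \cdot \vol_\M(\M)$. Rearranging and using that $(1 - Ch^2)^{-1} \leq 1 + C'h^2$ for $h \leq h_\M$ sufficiently small yields the desired inequality
\[ \sigma_\eta \TV(f) \leq (1 + Ch^2)\TV_h(f) + C \|f\|_{\Ck{2}(\M)} h. \]
I do not foresee a genuine obstacle: the argument is essentially a controlled linearization. The only point requiring some care is tracking the Taylor remainder through the covariant Hessian so that the constant depends only on $\|f\|_{\Ck{2}(\M)}$ and the geometry of $\M$, which is where the assumption $h \leq h_\M$ (ensuring the exponential map is a diffeomorphism and the Jacobian bound is available) is used.
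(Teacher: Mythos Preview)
Your proposal is correct and follows essentially the same approach as the paper's proof: Taylor expand $f$ along geodesics emanating from $x$, apply the reverse triangle inequality to isolate the linear term $|\langle \nabla f(x),v\rangle_x|$, use the Jacobian bound~\eqref{eqn:Jacobian} and the radial-symmetry computation to recover $\sigma_\eta|\nabla f(x)|_x$, and then integrate over $x$. The paper phrases the triangle inequality step as moving the remainder to the left-hand side before integrating (so both sides stay manifestly nonnegative), but this is the same argument as yours.
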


\begin{proof}
Let $f\in\Ck{1,1}(\M)$.
For a fixed $x\in \M$ we can Taylor expand $f$ around $x$ and write 
\[ f(y) = f(x) + \langle \nabla f(x),\exp_x^{-1}(y) \rangle_x + R_x(y), \quad y\in B_\M(x,h) \] 
where the remainder $R_x(y)$ satisfies
\[ \sup_{y \in B_\M(x,h)} |R_{x}(y)| \leq C\lVert f \rVert_{\Ck{1,1}(\M) }h^2 \]
for a constant $C$ that only depends on $\M$ (and in particular does not depend on $f$).
It follows that
\begin{align*}
& \frac{1}{h^{m+1}} \int_\M |f(x) - f(y)| \eta\left(\frac{d_\M(x,y)}{h}\right) \, \dd\vol_\M(y) + C h \lVert f \rVert_{\Ck{1,1}(\M)} \\
& \hspace{1cm} \geq \frac{1}{h^{m+1}} \int_\M \la \left\langle \nabla f(x),\exp_x^{-1}(y) \right \rangle_x \ra \eta\left(\frac{d_\M(x,y)}{h} \right) \, \dd\vol_\M(y).
\end{align*}
The term on the right hand side of the above expression can be written as 
\begin{align*}
& \frac{|\nabla f(x)|_x}{h^{m+1}}\int_{B(0,h) \subseteq \T_x\M} \left| \left\langle \frac{\nabla f(x)}{|\nabla f(x)|_x},v\right\rangle_x \right| \eta\left(\frac{|v|}{h}\right) J_x(v) \, \dd v \\
& \hspace{1cm} \geq (1-Ch^2)\frac{|\nabla f(x)|_x}{h^{m+1}}\int_{B(0,h) \subseteq \T_x\M} \left| \left\langle \frac{\nabla f(x)}{|\nabla f(x)|_x},v\right\rangle_x \right| \eta \left(\frac{|v|}{h}\right) \, \dd v \\
& \hspace{1cm} = \sigma_\eta (1- Ch^2) \lvert \nabla f(x) \rvert_x,
\end{align*}
where in order to go from the first to the second line we have used~\eqref{eqn:Jacobian}, and where in the last line we have used the radial symmetry of the integrand and the definition of $\sigma_\eta$ in~\eqref{def:sigmaeta}.
Integration over $x$ gives us the desired inequality.
\end{proof}

After considering the relationship between local and non-local energies we now present some results that relate non-local energies at different length-scales.
First we prove a subadditivity property.

\begin{lemma}
\label{lemm:subadditivity}
(Subadditivity)
Let $A$ be a Borel subset of $\M$ and let $g_A: (0,h_\M) \rightarrow \R$ be the function given by
\[ g_A(h):= \int_{\T\M} \eta(|v|_x)\one_A(x) \one_{A^c}(\Phi_h^1(x,v)) \, \dd \vol_{\T\M}(x,v). \]
Then, for every $h,h'$ with $h+h' < h_\M$ we have
\[ g_A(h+h') \leq g_A(h) + g_A(h'). \]
In particular, if $N \in \N$ is such that $Nh$ is smaller than $h_\M$ we can iterate the previous identity to obtain
\[ g_A(Nh) \leq Ng_A(h). \]
\end{lemma}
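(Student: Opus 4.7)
The plan is to reduce the claim to a pointwise indicator inequality on $\T\M$ combined with an application of Liouville's theorem. The intuition is transparent: a geodesic segment of length $h+h'$ that starts in $A$ and ends in $A^c$ must either already be outside $A$ after time $h$, or it must cross from $A$ into $A^c$ during the remaining segment of length $h'$.

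Concretely, I would first establish, for every $(x,v) \in \T\M$, the elementary pointwise inequality
\begin{equation*}
\one_A(x)\, \one_{A^c}(\Phi_{h+h'}^1(x,v)) \;\leq\; \one_A(x)\, \one_{A^c}(\Phi_{h}^1(x,v)) \;+\; \one_{A}(\Phi_{h}^1(x,v))\, \one_{A^c}(\Phi_{h+h'}^1(x,v)),
\end{equation*}
which is immediate by case analysis on whether the intermediate point $\Phi_h^1(x,v)$ lies in $A$ or not (on the right-hand side I have also harmlessly dropped the factor $\one_A(x)$ in the second summand). After multiplying by $\eta(|v|_x)$ and integrating against $\vol_{\T\M}$, the first term on the right-hand side reproduces $g_A(h)$ by definition, so the remaining task is to identify the second term with $g_A(h')$.

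For that identification, I would perform the change of variables $(x,v) \mapsto (y,w) := \Phi_h(x,v)$ and invoke three properties of the geodesic flow that were stated or recalled in the excerpt. First, by Liouville's theorem, $\Phi_h$ preserves $\vol_{\T\M}$, so no Jacobian appears. Second, the geodesic flow preserves fiber norms, i.e.\ $|\Phi_h^2(x,v)|_{\Phi_h^1(x,v)} = |v|_x$, so the weight $\eta(|v|_x)$ transfers as $\eta(|w|_y)$ under the new variables. Third, the semigroup identity $\Phi_{h+h'} = \Phi_{h'}\circ \Phi_h$ yields $\Phi_{h+h'}^1(x,v) = \Phi_{h'}^1(y,w)$. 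The restriction $h+h' < h_\M$ (together with the support constraint $|v|_x \leq 1$ coming from $\eta$) guarantees that all exponential maps involved are diffeomorphisms, so the change of variables is legitimate. Together these three observations convert the second integral into $g_A(h')$, completing the subadditivity.

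The iterated inequality $g_A(Nh) \leq N g_A(h)$ when $Nh$ is still below $h_\M$ then follows by a routine induction on $N$, applying the subadditivity just proved with $h' = (N-1)h$ and using the inductive hypothesis to bound $g_A((N-1)h)$. I do not anticipate any serious obstacle: the argument is essentially geometric bookkeeping, and the only delicate point is to ensure that Liouville's theorem together with the norm preservation property of the geodesic flow are invoked correctly during the change of variables.
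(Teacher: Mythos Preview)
Your proposal is correct and follows essentially the same approach as the paper: establish the pointwise indicator inequality by case analysis on $\Phi_h^1(x,v)$, integrate against $\eta(|v|_x)\,\dd\vol_{\T\M}$, and then identify the second integral with $g_A(h')$ via the change of variables $(x,v)\mapsto\Phi_h(x,v)$ using Liouville's theorem, the norm preservation $|v|_x=|\Phi_h^2(x,v)|_{\Phi_h^1(x,v)}$, and the semigroup identity $\Phi_{h+h'}^1=\Phi_{h'}^1\circ\Phi_h$. The iterated bound is obtained identically by induction.
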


\begin{proof}
First of all notice that for every $(x,v) \in \T\M$ we have
\[ \one_{A}(x)\one_{A^c}(\Phi^1_{h+h'}(x, v))\leq  \one_{A}(\Phi^1_{h}(x,v))\one_{A^c}(\Phi_{h+h'}^1(x,v))+ \one_{A}(x)\one_{A^c}(\Phi^1_{h}(x,v)). \]
Indeed, if the left hand side is equal to one, necessarily one of the two terms on the right hand side must be equal to one.
From this we conclude that \add{
\begin{align*}
\begin{split}
g_A(h+h') & \leq  \int_{\T \M} \eta(|v|_x) \one_{A}( \Phi^1_h(x,v)) \one_{A^c}( \Phi^1_{h+h'}(x,v)) \, \dd \vol_{\T\M}(x,v) \\
&  \hspace{0.3cm} + \int_{\T\M} \eta(|v|_x) \one_A(x) \one_{A^c}( \Phi^1_h(x,v)) \, \dd \vol_{\T\M}(x,v)  \\
& = g_A(h)
\\ & \hspace{0.3cm}  +\int_{\T\M} \eta(|\Phi^2_h(x,v)|_{\Phi^1_h(x,v)}) \one_{A}( \Phi^1_{h}(x,v)) \one_{A^c}( \Phi^1_{h'}\left(\Phi^1_{h}(x,v), \Phi^2_h(x,v) \right) ) \, \dd \vol_{\T\M}(x,v),
\end{split}
\end{align*}
}
where the last equality follows from the fact that $|v|_x =|\Phi^2_h(x,v)|_{\Phi^1_h(x,v)}$ (because lengths are preserved along geodesics) and the fact that $\Phi^1_{h+h'}(x,v)=\Phi^1_{h'}\left(\Phi^1_{h}(x,v), \Phi^2_h(x,v) \right)$ (which essentially says that moving along the geodesic starting at $x$ in the direction $v$ for $h+h'$ units of time is the same as moving only for $h$ unites of time and then continue moving for an extra $h'$ units of time).
To conclude, we consider the change of variables 
\[ (x,v) \mapsto (x',v') =\Phi_{h}(x,v) \]
and rewrite the last integral as
\[ \int_{\T\M} \eta(|v'|_{x'}) \one_{A}(x')\one_{A^c}( \Phi_{h'}(x',v')) \, \dd \vol_{\T\M}(x',v')= g_A(h'), \]
using the fact that the geodesic flow leaves the volume form $\vol_{\T\M}$ invariant according to Liouville's theorem.
This proves the result.
\end{proof}

We can now use the previous lemma to prove that the non-local TV seminorm with a small length-scale dominates the ones with larger length-scale. 

\begin{proposition}
\label{prop:monotonTV}
There is a constant $C>0$ such that for every $f \in L^1(\M)$ and every $ h, a$ with $0< h \leq a < \frac{h_{\M}}{2}$ we have
\[ \TV_{a}(f) \leq C \TV_{h}(f). \]
\end{proposition}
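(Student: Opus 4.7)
The plan is to reduce to the case $f = \one_A$ by a nonlocal coarea identity and then exploit Lemma~\ref{lemm:subadditivity} together with a simple monotonicity bound for the functional $g_A$. The starting point is the pointwise layer-cake identity
\[ |f(x)-f(y)| = \int_{-\infty}^{\infty} \bigl|\one_{\{f \leq t\}}(x) - \one_{\{f \leq t\}}(y) \bigr|\, \dd t, \]
which, plugged into the definition of $\TV_h$ and combined with Fubini, yields
\[ \TV_h(f) = \int_{-\infty}^{\infty} \TV_h\bigl(\one_{\{f \leq t\}}\bigr)\, \dd t, \]
and likewise for $\TV_a$. Thus it suffices to prove $\TV_a(\one_A) \leq C\, \TV_h(\one_A)$ for every Borel $A \subseteq \M$ with $C$ independent of $A$, and then integrate in $t$.

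For indicator functions I would write $y = \exp_x(w)$ in normal coordinates and use~\eqref{eqn:Jacobian} to obtain the two-sided equivalence
\[ (1 - Ch^2)\,\frac{2\, g_A(h)}{h} \leq \TV_h(\one_A) \leq (1 + Ch^2)\,\frac{2\, g_A(h)}{h}, \qquad 0 < h \leq h_\M. \]
Separately, the change of variables $w = h' v$ in the integral defining $g_A$ rewrites it as $h'^{-m}$ times the weighted volume of $\{(x,w) : x \in A,\ |w|_x \leq h',\ \exp_x(w) \in A^c\}$; pure monotonicity of this region in $h'$ then gives the crude bound
\[ g_A(h') \leq (h/h')^{m}\, g_A(h), \qquad 0 < h' \leq h < h_\M. \]

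Given $h \leq a < h_\M/2$, I would set $N = \lceil a/h \rceil$ and $\tilde h = a/N$, so that $a = N\tilde h < h_\M$ and $h/2 \leq \tilde h \leq h$. Iterating Lemma~\ref{lemm:subadditivity} $N-1$ times (all partial sums remain below $h_\M$) gives $g_A(a) \leq N g_A(\tilde h)$, while the monotonicity bound above gives $g_A(\tilde h) \leq 2^m g_A(h)$. Combining,
\[ \frac{g_A(a)}{a} \;=\; \frac{g_A(a)}{N\tilde h} \;\leq\; \frac{g_A(\tilde h)}{\tilde h} \;\leq\; \frac{2^{m+1}\, g_A(h)}{h}, \]
and feeding this back through the two-sided equivalence yields $\TV_a(\one_A) \leq C\, \TV_h(\one_A)$ with $C$ depending only on $m$ and on the curvature constant of~\eqref{eqn:Jacobian}. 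Integration in $t$ then concludes the proof for arbitrary $f \in \Lp{1}(\M)$.

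No step is technically heavy; the only mild subtlety is the factorization $a = N\tilde h$ with $N \in \mathbb{N}$ and $\tilde h \in [h/2, h]$, which is what lets subadditivity be applied cleanly while keeping the loss from the $h'$-to-$h$ monotonicity step to a universal factor $2^m$. The Jacobian estimate~\eqref{eqn:Jacobian} handles the translation between $g_A$ and $\TV_h$ uniformly in $h$, and the coarea identity for the nonlocal TV is immediate from the layer-cake representation of $|f(x)-f(y)|$.
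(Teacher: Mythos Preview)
Your proof is correct and follows the same overall strategy as the paper: reduce to indicators via the nonlocal coarea identity, translate $\TV_h(\one_A)$ to $g_A(h)/h$ via the Jacobian bounds~\eqref{eqn:Jacobian}, and invoke the subadditivity Lemma~\ref{lemm:subadditivity}. The only difference is in how the mismatch between $a$ and integer multiples of $h$ is handled. The paper chooses $N$ with $(N-1)h < a \leq Nh$, applies subadditivity at scale $h$ to bound $\TV_{Nh}$, and then uses the trivial kernel inclusion $\TV_a(\one_A) \leq (Nh/a)^{m+1}\TV_{Nh}(\one_A)$, picking up a factor $(N/(N-1))^{m+1}$. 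You instead subdivide exactly, writing $a = N\tilde h$ with $\tilde h \in [h/2,h]$, apply subadditivity at scale $\tilde h$, and then use your extra observation that $h^m g_A(h)$ is nondecreasing in $h$ (immediate from the change of variables $w = hv$) to pass from $g_A(\tilde h)$ back to $g_A(h)$ at the cost of a factor $2^m$. Both devices are elementary; your version is marginally cleaner in that it sidesteps the edge case $N=1$ (i.e.\ $a=h$) where the paper's factor $(N/(N-1))^{m+1}$ degenerates and one must note separately that the inequality is trivial.
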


\begin{proof}
We first restrict our attention to the case $f= \one_A$ for some Borel subset $A$ of $\M$.
Notice that for arbitrary $h \leq h_\M$ we can rewrite $\TV_{h}(\one_A)$ as:
\begin{align*}
\begin{split}
\TV_h(\one_A) := & \frac{1}{h^{m+1}}\int_{\M}\int_\M \eta \left( \frac{d_\M(x,y)}{h} \right) |\one_A(x) - \one_{A}(y)| \, \dd \vol_\M(y) \, \dd \vol_\M(x) \\
& = \frac{2}{h^{m+1}}\int_{\M}\int_\M \eta \left( \frac{d_\M(x,y)}{h} \right) \one_A(x) \one_{A^c}(y) \, \dd \vol_\M(y) \, \dd \vol_\M(x) \\
& = \frac{2}{h^{m+1}} \int_{\M}\int_{B(0,h)\subseteq\T_x\M} \eta \left(\frac{|v|_x}{h} \right)\one_{A}(x)\one_{A^c}(\exp_x(v)) J_x(v) \, \dd v \, \dd \vol_\M(x) \\
& = \frac{2}{h} \int_{\M}\int_{B(0,1)\subseteq\T_x\M}\eta \left(|v|_x \right)\one_{A}(x)\one_{A^c}(\exp_x(h v)) J_x( h v) \, \dd v \, \dd \vol_\M(x).
\end{split}
\end{align*}
Now, from~\eqref{eqn:Jacobian} we know that for all $x \in \M$ and $v \in \T_x\M$ with $|v| \leq 1$ we have
\[ 1-Ch^2 \leq J_{x}(h v) \leq 1+ C h^2 \remmath{,}\addmath{.} \]
\rem{according to~\eqref{eqn:Jacobian}.}
From this it follows that 
\begin{align*}
\begin{split} 
& (1- Ch^2)\frac{2}{h} \int_{\M}\int_{B(0,1)\subseteq\T_x\M}\eta\left(|v|_x\right)\one_{A}(x)\one_{A^c}(\exp_x(h v)) \, \dd v \, \dd x \leq \TV_h(\one_A) \\
& \hspace{1cm} \leq (1+ Ch^2)\frac{2}{h} \int_{\M}\int_{B(0,1)\subseteq\T_x\M} \eta\left(|v|_x\right)\one_{A}(x)\one_{A^c}(\exp_x(h v)) \, \dd v \, \dd x
\end{split}
\end{align*}
which is the same as (using the notation from Lemma~\ref{lemm:subadditivity})
\begin{equation}
\label{eq:TVnonlocalBound}
(1- Ch^2) \frac{2g_A(h)}{h} \leq \TV_h(\one_A) \leq (1+ Ch^2) \frac{2g_A(h)}{h}.
\end{equation}

Let us now take $a$ such that $h \leq a < \frac{h_\M}{2}$, and let $N\in\N$ be such that
\[ (N-1) h < a \leq Nh. \]
Since $Nh \leq h_\M$ we have 
\[ \TV_{Nh}(\one_A) \leq \frac{(1+CN^2h^2) 2g_A(Nh)}{Nh} \leq \frac{(1+CN^2h^2) 2 g_A(h)}{h}, \]
by Lemma~\ref{lemm:subadditivity}. It now follows (using the lower bound for $\TV_h(\one_A)$ in \eqref{eq:TVnonlocalBound}) that
\[ \TV_{Nh}(\one_A) \leq \frac{(1+ C\addmath{N^2}h^2)}{\addmath{1-ch^2}}\TV_h(\one_A). \]
\rem{(where we absorb the factor of $N$ into the constant $C$).}
\add{By Taylor's theorem we have $\frac{1}{1-ch^2}=1+Ch^2+O(h^4)$ and so for $h$ sufficiently small we can assume $\frac{1}{1-ch^2}\leq 1+Ch^2$.
Moreover,
\[ Nh = (N-1)h + h \leq 2a. \]
So,
\[ \TV_{Nh}(\one_A) \leq (1+Ca^2)(1+ Ch^2)\TV_h(\one_A). \]}
Finally, notice that, from the 
choice of $N$, it follows that
\begin{align*}
\TV_{a}(\one_A) & \leq \left(\frac{Nh}{a}\right)^{m+1} \TV_{Nh}(\one_A) \\
& \leq \left(\frac{N}{N-1}\right)^{m+1} \TV_{Nh}(\one_A) \\
& \leq \left(\frac{N}{N-1}\right)^{m+1}\addmath{(1+Ca^2)}(1 + Ch^2) \TV_{h}(\one_A).
\end{align*}
Given the assumed smallness of $h$ and $a$ we can bound the right hand side by a constant times $TV_h(\one_A)$.

Now that we have proved the desired inequality for functions of the form $f=\one_A$ it is straightforward to extend it to general $f\in \Lp{1}(\M)$ by means of the coarea formula for the non-local total variation.
Indeed, using a layer cake representation, one can show that for every $f\in\Lp{1}(\M)$ and for every $h$ one has:
\[ \TV_h(f)= \int_{-\infty}^\infty \TV_{h}(\one_{\{ f \leq t \}}) \, \dd t; \]
\add{e.g.}\addii{, in Euclidean domains,~\cite{visintin1991generalized}.} 
From this it follows that
\[ \TV_{a}(f) = \int_{-\infty}^\infty \TV_{a}(\one_{\{ f \leq t \}}) \, \dd t \leq C \int_{-\infty}^\infty \TV_{h}(\one_{\{ f \leq t \}}) \, \dd t = C \TV_h(f). \qedhere \]
\end{proof}

In what follows we consider $\phi:[0,\infty) \rightarrow [0,\infty)$ a smooth function with compact support for which 
\[ \phi(t) \leq C \eta(t), \quad \forall t \geq0 \] 
for some $C>0$, and for which
\[ \int_{\R^m}\phi(|x|)\, \dd x =1. \]
We use $\phi_a$ to denote the rescaled version
\[ \phi_a(t):= \frac{1}{a^m} \phi \left(\frac{t}{a}\right), \]
and use it to define the \emph{smoothing operator}
\begin{align}
\begin{split}
&\Lambda_a : \Lp{1}(\M)  \rightarrow \Ck{\infty}(\M) \\
&\Lambda_a f(x)  = \frac{1}{\tau_a(x)}\int_{B_\M(x,a)}\phi_a(d_\M(x,y)) f(y) \, \dd\vol_\M(y), 
\label{def:Lambda}
\end{split}
\end{align}
where $\tau_a(x)$ is the normalization constant
\[ \tau_a(x) := \int_{B_\M(x,a)}\phi_a(d_\M(x,y)) \, \dd\vol_\M(y). \]
The parameter $a>0$ is a free parameter that we will choose later on.
 
Here we are attempting to mimic the construction of smoothing operators in \cite{BuragoIvanovKurylev} in the context of Dirichlet energies, i.e. the construction of an operator $\Lambda:\Lp{2}(\M) \rightarrow \Ck{\infty}(\M) $ for which a tight relationship between non-local and local Dirichlet energies can be obtained.
In our setting this amounts to finding an operator $\Lambda: \Lp{1}(\M) \rightarrow \Ck{\infty}(\M)$ satisfying, roughly speaking, 
\begin{equation}
\label{aux:Conv1}
\sigma_\eta \TV(\Lambda_a f) \leq (1+o(1))\TV_h(f) , \quad \forall f \in \Lp{\infty}(\M)
\end{equation}
as well as
\begin{equation}
\label{aux:Conv2}
\lVert \Lambda_a f - f \rVert_{\Lp{1}(\M)} \leq o(1)\TV_h(f)
\end{equation}
where $\sigma_\eta$ is the surface tension in~\eqref{def:sigmaeta}.
In~\cite{BuragoIvanovKurylev}, the analogous statement is obtained by selecting the smoothing operator as a convolution operator with respect to a conveniently chosen kernel and with a bandwidth of the same order as the connectivity lengthscale $h>0$.
The structure of the $\Lp{2}$-type Dirichlet seminorms makes the selection of this special kernel possible, but a kernel with similar properties does not seem to be found easily in the $\TV$ case.
As we will see below we will be forced to define our smoothing operator as a convolution operator whose kernel has a bandwidth $a>0$ that is much larger than the length-scale $h$ used to define the non-local TV seminorm, and more involved computations will be needed.
The subadditivity property for the TV seminorm, proved in Proposition~\ref{lemm:subadditivity}, combined with Proposition~\ref{prop:ConvolutionMonotone} below allow us to show that when $h \ll a$ we can still get the desired relations~\eqref{aux:Conv1} and~\eqref{aux:Conv2}, at the cost of losing some orders in the convergence rates.

We start by presenting some elementary properties of the smoothing operator $\Lambda_a$.

\begin{proposition}
\label{lem:Smoothening}
For every $0<h\leq a<\frac{h_\M}{2}$ and $k\in\N$ there exists $C$ (which may depend on $k$ and $\M$ but is independent of $a$ and $h$) such that
\begin{enumerate}
\item $\|\Lambda_af\|_{\Ck{k}(\M)} \leq Ca^{-k} \|f\|_{\Lp{\infty}(\M)}$ for all $f\in \Lp{\infty}(\M)$.
\item $\|f- \Lambda_a f\|_{\Lp{1}(\M)} \leq Ca \TV_h(f)$ for all $f \in \Lp{1}(\M)$.
\end{enumerate}
\end{proposition}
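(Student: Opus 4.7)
The plan is to prove both bounds by exploiting the fact that $\Lambda_a$ is essentially a convolution against the smooth, compactly supported kernel $\phi_a$, combined with the control of the normalization $\tau_a(x)$ that follows from the Jacobian bound \eqref{eqn:Jacobian}. Part (i) will be a standard convolution/derivative-on-manifold argument whose scaling reflects that each derivative falling on $\phi_a$ costs a factor of $a^{-1}$; part (ii) will be an $L^1$ estimate of the commutator with the identity that will naturally produce the factor $a \cdot \TV_a(f)$, after which Proposition~\ref{prop:monotonTV} will let us replace $\TV_a(f)$ by $\TV_h(f)$ at the cost of a constant.

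For (i), I would first observe that $\tau_a(x)$ is bounded uniformly above and below independently of $a$ (for $a\leq h_\M$): writing the integral in normal coordinates around $x$ and using \eqref{eqn:Jacobian}, one has $\tau_a(x) = \int_{\R^m}\phi(|z|)\,\dd z + O(a^2) = 1 + O(a^2)$, so $1/\tau_a$ is smooth and bounded. Then, to control $\|\Lambda_a f\|_{\Ck{k}(\M)}$, I would express $\Lambda_a f$ locally (via a finite atlas) as an ordinary convolution against the Euclidean pullback of $\phi_a$ times the Jacobian. Each spatial derivative of this pullback kernel produces one factor of $a^{-1}$ together with a $\Ck{k}$-norm of $\phi$; the finite number of coordinate changes contributes only a manifold-dependent constant. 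Differentiating $1/\tau_a(x)$ costs at worst factors of $a^{-1}$ as well, by the same reasoning applied to $\tau_a(x)$. Putting $\|f\|_{\Lp{\infty}}$ outside the integral concludes (i).

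For (ii), the key identity is
\[
f(x) - \Lambda_a f(x) = \frac{1}{\tau_a(x)} \int_{B_\M(x,a)} \phi_a(d_\M(x,y))\bigl(f(x)-f(y)\bigr)\,\dd\vol_\M(y).
\]
Integrating $|f-\Lambda_a f|$ over $\M$, using the uniform bound on $1/\tau_a$, and applying $\phi_a(t)\leq C a^{-m}\eta(t/a)$ (which follows from the assumption $\phi\leq C\eta$), gives
\[
\|f-\Lambda_a f\|_{\Lp{1}(\M)} \leq \frac{C}{a^m}\int_\M\int_\M \eta\!\left(\frac{d_\M(x,y)}{a}\right)|f(x)-f(y)|\,\dd\vol_\M(y)\,\dd\vol_\M(x) = Ca\,\TV_a(f).
\]
Because $h\leq a<h_\M$, Proposition~\ref{prop:monotonTV} yields $\TV_a(f)\leq C\TV_h(f)$, finishing (ii).

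The only real obstacle I anticipate is the $\Ck{k}$ estimate: differentiating a function defined intrinsically on $\M$ through the exponential-map kernel requires being careful about how covariant derivatives interact with the $y$-dependence inside $\phi_a(d_\M(x,y))$ and with $\tau_a(x)$. I expect to dispatch this by working in normal coordinates centered at each base point and reducing to standard Euclidean convolution estimates, absorbing the resulting curvature-dependent terms into the constant $C$ (which is allowed to depend on $k$ and $\M$). The $L^1$ estimate in (ii), by contrast, is essentially immediate once Proposition~\ref{prop:monotonTV} is invoked, and it is precisely this reduction from scale $a$ to scale $h$ that makes the choice $h\ll a$ compatible with the small commutator error required in \eqref{aux:Conv2}.
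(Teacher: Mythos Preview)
Your proposal is correct and follows essentially the same route as the paper. For (ii) your argument is identical: write $f-\Lambda_a f$ as an average of differences, bound $\phi_a$ by $Ca^{-m}\eta(\cdot/a)$ to obtain $Ca\,\TV_a(f)$, and then invoke Proposition~\ref{prop:monotonTV}. For (i) the paper differs only cosmetically: instead of pulling back through a finite atlas, it computes $\nabla\Lambda_a f$ intrinsically via $\nabla_x\, d_\M(x,y)=-\exp_x^{-1}(y)/d_\M(x,y)$, which produces the same $a^{-1}$ scaling from $\phi'$ and the same use of the Jacobian bound to control $\tau_a$. One small remark: your statement that differentiating $1/\tau_a$ ``costs at worst factors of $a^{-1}$'' is a harmless overestimate here---in fact $|\nabla\tau_a^{-1}|\leq Ca$ (the paper exploits this sharper bound elsewhere), but the cruder bound already suffices for~(i).
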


\begin{proof}
For the first inequality we just illustrate the case $k=1$, the other cases are obtained similarly (and anyway, are standard in the flat Euclidean case).
The gradient of $\Lambda_af$ is computed as
\begin{align*}
\nabla \Lambda_a f(x) & = - \frac{1}{\tau_a(x)a^{m+1}} \int_{\M} \phi'\left(\frac{d_\M(x,y)}{a} \right) \frac{\exp_x^{-1}(y)}{d_\M(x,y)} f(y) \, \dd\vol_\M(y) \\
& + \frac{\Lambda_a f(x)}{\tau_a(x) a^{m+1}} \int_{\M} \phi'\left(\frac{d_\M(x,y)}{a} \right) \frac{\exp_x^{-1}(y)}{d_\M(x,y)} \, \dd\vol_\M(y). 
\end{align*}
Taking the norm on both sides and using the triangle inequality we see that
\[ |\nabla \Lambda_a f(x)|_x \leq \frac{2\lVert f \rVert_{\Lp{\infty}(\M)}}{\tau_a(x)a^{m+1}} \int_{\M} \left|\phi'\left(\frac{d_\M(x,y)}{a} \right)\right| \, \dd\vol_\M(y). \]
This term on the other hand can be bounded by $Ca^{-1}\lVert f \rVert_{\Lp{\infty}(\M)}$.
This follows from the fact that $\tau_a(x)$ can be written as
\begin{equation}
\label{aux:lowerboundtaua}
\tau_a(x) =\frac{1}{a^m} \int_{B(0,a)}\phi\left(\frac{|v|_x}{a}\right)J_x(v) \, \dd v \geq(1-Ca^2)\frac{1}{a^m} \int_{B(0,a)}\phi\left(\frac{|v|_x}{a}\right) \, \dd v= (1-Ca^2), 
\end{equation}
and also
\begin{align*}
\frac{1}{a^m} \int_{\M} \left|\phi'\left(\frac{d_\M(x,y)}{a} \right)\right| \, \dd\vol_\M(y) & = \frac{1}{a^m}\int_{B(0,a)} \left|\phi'\left(\frac{|v|_x}{a}\right)\right|J_x(v) \, \dd v \\
 & \leq \frac{1+Ca^2}{a^m}\int_{B(0,a)}\left|\phi'\left(\frac{|v|_x}{a} \right)\right| \, \dd v \\
 & \leq C(1+Ca^2),
\end{align*}
thanks to the bounds on the Jacobian~\eqref{eqn:Jacobian}.

For the second identity we can write
\begin{align*}
\Lambda_a f (x) - f(x) & = \frac{1}{\tau_a(x)}\int_{\M}\phi_a(d_\M(x,y))f(y) \, \dd \vol_\M(y) - f(x) \\
& = \frac{1}{\tau_a(x)} \int_{\M}\phi_a(d_\M(x,y))(f(y)-f(x)) \, \dd \vol_\M(y),
\end{align*}
from where it follows that
\begin{align*}
\begin{split}
\lVert \Lambda_a f - f \rVert_{\Lp{1}(\M)} & \leq (1+ Ca^2) \int_{\M}\int_{\M}\phi_a(d_\M(x,y))|f(y)-f(x)| \, \dd \vol_\M(y) \, \dd \vol_\M(x) \\
& \leq \frac{C}{a^m}\int_{\M}\int_{\M}\eta\left(\frac{d_\M(x,y)}{a}\right)|f(y)-f(x)| \, \dd \vol_\M(y) \, \dd \vol_\M(x) \\
& = C a\TV_a(f) \leq Ca \TV_h(f),
\end{split}
\end{align*}
where for the first inequality we have used the lower bound on $\tau_a$ in \eqref{aux:lowerboundtaua}, for the second inequality we have used the fact that $\phi$ was chosen to be dominated by $\eta$, and in the last inequality we have used Proposition \ref{prop:monotonTV}.
\end{proof}

The next result is an important technical piece that we use in the sequel.
In the flat Euclidean case, the proof is quite elementary only involving a simple change of variables.
However, in the curved manifold setting, more involved computations are needed.
Three technical facts from differential geometry that are used in the proof are discussed in detail in the Appendix.

\begin{proposition}
\label{prop:ConvolutionMonotone}
(Monotonicity by convolution)
There are constants $C_1, C_2$ such that for all $0< h \leq a \leq \frac{h_\M}{2}$ and for all $f\in\Lp{\infty}(\M)$ we have
\[ \TV_{\widetilde{h}} (\Lambda_a f) \leq (1+ C_1a)\TV_h(f) + C_1 a \lVert f\rVert_{\Lp{\infty}(\M)}, \]
where $\widetilde{h}:=h(1- C_2a)$.
\end{proposition}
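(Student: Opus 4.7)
The plan is to imitate the flat-Euclidean monotonicity identity $\TV_h(\phi_a * f) \le \TV_h(f)$, whose proof uses only translation invariance and the substitution $(x,y) \mapsto (x-z,y-z)$. On the curved manifold $\M$ translation is replaced by geodesic motion, and the pairing ``$y$ moves by the same vector as $x$'' is replaced by parallel transport $P_{xy}: \T_x\M \to \T_y\M$ along the minimizing geodesic from $x$ to $y$. The curvature corrections generated by this substitute will account for both the contraction $\widetilde h = h(1 - C_2 a)$ and the additive remainder $C_1 a \|f\|_{\Lp{\infty}(\M)}$.

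First, I would rewrite the difference $\Lambda_a f(x) - \Lambda_a f(y)$ in a ``paired'' form by expressing $\Lambda_a f(x)$ and $\Lambda_a f(y)$ in normal coordinates based at $x$ and $y$ respectively and then applying the change of variables $w = P_{xy}(v)$ in the integral for $\Lambda_a f(y)$ (which, being an isometry, preserves $\phi(|\cdot|/a)$). This produces
\[ \Lambda_a f(x) - \Lambda_a f(y) = \frac{1}{a^m \tau_a(x)} \int_{B(0,a) \subset \T_x\M} \phi(|v|/a) \bigl[f(\exp_x(v)) - f(\exp_y(P_{xy}(v)))\bigr] J_x(v)\,\dd v + R(x,y), \]
where $R(x,y)$ collects the error incurred by replacing $\tau_a(y)^{-1} J_y(P_{xy}(v))$ by $\tau_a(x)^{-1} J_x(v)$. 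Smooth dependence of the exponential chart on the base point together with the uniform lower bound \eqref{aux:lowerboundtaua} on $\tau_a$ yield $|R(x,y)| \le C a\, d_\M(x,y)\,\|f\|_{\Lp{\infty}(\M)}$; after integration against $\eta(d_\M(x,y)/\widetilde h)/\widetilde h^{\,m+1}$ this produces the additive $C_1 a\,\|f\|_{\Lp{\infty}(\M)}$ in the statement.

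Second, I would invoke the key geometric step, whose derivation I would defer to the appendix: for $|v|\le a$ and $d_\M(x,y)\le \widetilde h$ one has the Jacobi-field estimate
\[ d_\M\bigl(\exp_x(v),\exp_y(P_{xy}(v))\bigr) \le (1 + C_2 a)\, d_\M(x,y), \]
and the map $\Psi_v:(x,y)\mapsto \bigl(\exp_x(v),\exp_y(P_{xy}(v))\bigr)$ is a local diffeomorphism near the diagonal with Jacobian $1 + O(a)$ uniformly in $|v|\le a$. The first estimate dictates the choice of $\widetilde h$: since $\eta = \one_{\{\cdot\le 1\}}$, whenever $d_\M(x,y)\le \widetilde h$ one has $d_\M(\Psi_v(x,y))\le h$ and hence $\eta(d_\M(x,y)/\widetilde h) \le \eta(d_\M(\Psi_v(x,y))/h)$.

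Finally, taking absolute values in the paired representation, integrating against $\eta(d_\M(x,y)/\widetilde h)/\widetilde h^{\,m+1}$, swapping the order of integration and for each fixed $v$ performing the substitution $(x,y) = \Psi_v^{-1}(x',y')$, the Jacobian contributes a factor $1 + O(a)$, the factor $J_x(v)/\tau_a(x)$ contributes $1 + O(a^2)$, the kernel comparison gives the inclusion of the previous step, and $(\widetilde h/h)^{-(m+1)} = 1 + O(a)$. Using $a^{-m}\int_{B(0,a)}\phi(|v|/a)\,\dd v = 1$ then yields $\TV_{\widetilde h}(\Lambda_a f) \le (1+C_1 a)\TV_h(f) + C_1 a\|f\|_{\Lp{\infty}(\M)}$. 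The main obstacle is the geometric input: both the geodesic deviation bound and the Jacobian estimate for $\Psi_v$ require Jacobi-field machinery along the geodesic from $x$ to $y$ that has not yet been developed in this section, which is precisely why the authors defer this technical input to the appendix.
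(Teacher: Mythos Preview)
Your proposal is correct and follows the same strategy as the paper's proof: pair the two convolution integrals via parallel transport $P_{xy}$, split off the $O(a)\lVert f\rVert_{\Lp{\infty}}$ error coming from the variation of $\tau_a$ and of $J_x$ versus $J_y\circ P_{xy}$, use the Levi--Civita parallelogramoid estimate for $d_\M(\exp_x v,\exp_y P_{xy}v)$ to force the kernel comparison (which dictates $\widetilde h$), and then change variables with Jacobian $1+O(a)$. The only cosmetic difference is that the paper packages your ``for each fixed $v$, substitute in $(x,y)$'' step as a single diffeomorphism $\widetilde\Psi$ of $\T^2\M$ acting on the triple $(x,w,v)$, which makes the Jacobian computation coordinate-free but is otherwise the same argument.
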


\begin{proof}
Let $\widetilde{h}:= h(1-Ca)$ for some constant $C$ that will be chosen later. We start by estimating $\TV_{\widetilde h}(\Lambda_a f)$, using the triangle inequality to bound it by the sum of:
\[A_1:= \frac{1}{\widetilde{h}^{m+1}} \int_{\M}\int_{\M} \left | \left(\frac{1}{\tau_a(x)}- \frac{1}{\tau_a(y)} \right)\widetilde \Lambda_af(x) \right | \eta\left( \frac{d_\M(x,y)}{\widetilde{h}}\right) \dd\vol_{\M}(x)\dd\vol_{\M}(y) \]
and
\begin{align*}
A_2:= \frac{1}{\widetilde{h}^{m+1}} \int_{\M}\int_{\M} \left | \frac{1}{\tau_a(y)}( \widetilde{\Lambda}_af(x) - \widetilde{\Lambda}_af(y)) \right | \eta\left( \frac{d_\M(x,y)}{\widetilde{h}}\right) \dd\vol_{\M}(x)\dd\vol_{\M}(y),
\end{align*}
where 
\[ \widetilde{\Lambda}_a f(x):= \int_{\M} \phi_a(d_\M(x,z))f(z)\dd\vol_\M(z).\]

Let us first bound the term $A_1$. Since we can bound $|\widetilde \Lambda_a f(x)|$ uniformly by $C \lVert f \rVert_{\Lp{\infty}(\M)}$ we just need to find a bound for 
\[ \frac{1}{\widetilde{h}^{m+1}} \int_{\M}\int_{\M} \left | \frac{1}{\tau_a(x)}- \frac{1}{\tau_a(y)} \right | \eta\left( \frac{d_\M(x,y)}{\widetilde{h}}\right) d\vol_{\M}(x)d\vol_{\M}(y) = \TV_{\widetilde{h}}( \tau_a^{-1}). \]
Now, using Proposition \ref{prop:biasineq} we know $\TV_{\widetilde{h}}(\tau_a^{-1})$ is smaller than $C\TV(\tau_a^{-1})$ (using also the smallness of $h$), which can be written as $C\TV(\tau_a^{-1})= C \int_{\M} |\nabla \tau_a^{-1}(x)| d\vol_\M(x) $ given that $\tau_a^{-1}$ is smooth (because $\phi$ is smooth and because $\tau_a$ is bounded away from zero as shown in \eqref{aux:lowerboundtaua}). Thus we just need to compute an estimate for $|\nabla \tau_a^{-1}(x)|$; this has already been done in \cite{BuragoIvanovKurylev}, but here we produce the argument for completeness. Indeed,
\[ \nabla \tau_a^{-1}(x) = -\frac{1}{\tau_a(x)^2} \nabla \tau_a(x) = \frac{1}{a^{m+1}\tau_a(x)^2} \int_{\M}\phi'\left(\frac{d_\M(x,z)}{a}\right) \frac{\exp_x^{-1}(z)}{d_\M(x,z)}\dd\vol_\M(z),\]
and in turn, the above integral can be written as
\[\int_{B(0,a)} \phi'(|v|_x/a)\frac{v}{|v|_x}J_x(v) \, \dd v = \int_{B(0,a)} \phi'(|v|_x/a)\frac{v}{|v|_x}(J_x(v)-1) \, \dd v, \]
where the last equality is due to radial symmetry. Using the estimates on the Jacobian \eqref{eqn:Jacobian} the norm of the above expression can be bounded by $Ca^{m+2}$ and thus $|\nabla \tau_a^{-1}(x)|\leq C a$. Putting all estimates together we conclude that $A_1 \leq C\lVert f \rVert_{\Lp{\infty}(\M)} a$.

Now we focus on estimating $A_2$. First of all, given the lower bounds for $\tau_a$ in \eqref{aux:lowerboundtaua} we can focus on finding bounds for $\TV_{\widetilde{h}}(\widetilde{\Lambda}_a f)$. In other words, thanks to \eqref{aux:lowerboundtaua} we already have
\[ A_2 \leq (1+Ca^2) \TV_{\widetilde{h}}(\widetilde {\Lambda}_a f ), \]
and so, we just need to bound $\TV_{\widetilde{h}}(\widetilde {\Lambda}_a f )$. To achieve this, we start by noticing that for any given $x,y\in \M$ satisfying $d_\M(x,y)\leq \widetilde{h}$ we can write
\begin{align*}
& \widetilde{\Lambda}_af(x) =\frac{1}{a^m} \int_{B(0,a) \subseteq \T_x\M}\phi(|v|_x/a)f(\exp_x(v))J_x(v)\,\dd v, 
\\& \widetilde{\Lambda}_af(y) =\frac{1}{a^m} \int_{B(0,a) \subseteq \T_y\M}\phi(|v'|_y/a)f(\exp_y(v'))J_y(v')\, \dd v'.  
\end{align*}
We attempt to write this last expression in terms of an integral over tangent vectors at $x$. For that purpose we consider $PT_{x,y}: \T_x\M \rightarrow \T_y\M$ the parallel transport from $x$ to $y$ along a constant speed geodesic connecting $x$ and $y$. By definition of parallel transport, the map $PT_{x,y}$ is an isometry between $\T_x\M$ and $\T_y \M$ and hence its Jacobian is equal to one. Therefore, we can write 
\[ \widetilde{\Lambda}_af(y) =\frac{1}{a^m} \int_{B(0,a) \subseteq \T_x\M}\phi(|v|_x/a)f(\exp_y(PT_{x,y}(v)))J_y(PT_{x,y}(v))\, \dd v. \]
The above expression allows us to write the difference $\widetilde \Lambda_af(x)-\widetilde \Lambda_af(y)$ in terms of a single integral
\[ \frac{1}{a^m} \int_{B(0,a) \subseteq \T_x\M}\phi(|v|_x/a)\left(f(\exp_x(v))J_x(v)-f(\exp_y(PT_{x,y}(v)))J_y(PT_{x,y}(v))\right)\, \dd v. \]
Now, as we show in the Appendix, there is a constant $C$ such that
\begin{equation}
|J_x(v) - J_y(PT_{x,y}(v)) | \leq C ah, 
\label{app:JacobiansEstiamtes}
\end{equation}
for all $x \in \M$, all $y \in B_\M(x,h)$ and all $v\in T_x \M$ with $|v|\leq a$. As a consequence, we can use the triangle inequality to bound $\TV_{\widetilde{h}}(\widetilde{\Lambda}_af)$ by the sum of:
\[ A_{2,1}:= C a\lVert f \rVert_{\Lp{\infty}(\M)} \]
and
\[ A_{2,2}:=\frac{1}{\widetilde{h}}\int_{\M}\int_{\M}\int_{\T_x \M} \eta_{\widetilde{h}}\left( d_\M(x,y)\right)\phi_a(|v|_x)| F(x, \exp_{x}^{-1}(y),v) | J_x(v) \, \dd v \, \dd \vol_{\M}(y) \, \dd \vol_\M(x),\]
where
\[ F(x,w,v):= f(\exp_x(v)) - f (\exp_{\exp_x(w)}(PT_{x, \exp_x(w)}(v) )). \]
%
%
%
$A_{2,2}$ can be rewritten as
\begin{align*}
&A_{2,2}= \frac{1}{\tilde h} \int_\M \int_{\T_x \M} \int_{\T_x \M} \eta_{\tilde h} \left( |w|_x \right) \phi_a(|v|_x) |F(x,w,v)| J_x(w) J_x(v) \, \dd v \, \dd w \, \dd \vol_\M(x)
\\& \leq \frac{(1+ Ca^2+ Ch^2)}{\tilde h} \int_\M \int_{\T_x \M} \int_{\T_x \M} \eta_{\tilde h} \left( |w|_x \right) \phi_a(|v|_x) |F(x,w,v)| \, \dd v \, \dd w \, \dd \vol_\M(x)
\end{align*}
where the inequality is due to the upper bound on $J_x$ from \eqref{eqn:Jacobian}. It will be convenient to rewrite the term on the right hand side of the inequality as a single integral
\begin{equation}
\frac{(1+ Ca^2 + Ch^2)}{\tilde h} \int_{\T^2 \M} \eta_{\tilde h} \left( |w|_x \right) \phi_a(|v|_x) |F(x,w,v)| \, \dd\vol_{\T^2 \M}(x,w,v),  
\label{aux:1}
\end{equation}
over the fiber bundle 
\[ \mathcal{T}^2\M:= \{ (x,v_1,v_2) \: : \: x \in \M, \quad v_1, v_2 \in \T_x\M \},\]
whose volume form $\vol_{\T^2\M}$ can be understood by $\dd\vol_{\T^2\M}(x,v_1,v_2) = \dd v_1 \, \dd v_2 \, \dd \vol_\M(x)$ (and we refer to the appendix for a rigorous formulation of the volume form on $\T^2\M$). 
We would also like to notice that the integrand in \eqref{aux:1} is zero outside the set
\[ \mathcal{B}^2_{\widetilde{h},a}:= \{ (x,v_1,v_2) \in \mathcal{T}^2 \M \: :\: |v_1|_x\leq \tilde h ,\quad |v_2|_x \leq a \}. \]

Next, we define a transformation $\widetilde{\Psi}$ of $(x,v_1,v_2)$ which will allow us to simplify the expression in \eqref{aux:1}. We let
\[\widetilde{\Psi}: (x,w,v) \longmapsto (\tilde x , \tilde w , \tilde v) \] 
be the map
\begin{equation}
\tilde x:= \Phi_1^1(x,v), \quad \tilde v := \Phi_1^2(x,v), \quad \tilde w:= \exp_{\tilde x}^{-1}\left( \exp_{\exp_x(w)}(PT_{x, \exp_x(w)}(v) ) \right),  
\label{def:tildes} 
\end{equation}
defined for all $(x,w,v) \in \mathcal{B}^2_{c_1, c_2}$; here, $c_1, c_2$ are order one quantities that are sufficiently small so as to guarantee that $\widetilde{\Psi}$ is a diffeomorphism. It is straightforward to verify that the image of $\widetilde{\Psi}$ is contained in $\mathcal{B}^2_{\tilde{c}_1, c_2}$ for some order one quantity $\tilde c_1$. In the Appendix we show that
\begin{equation}
\left | |w|_x -|\tilde w|_{\tilde x} \right| \leq Cah  
\label{app:DistanceEstimate}
\end{equation} 
for all $(x,w,v) \in \mathcal{B}^2_{ \widetilde{h},a}$, and in particular, for such a triple we have $|\tilde w|_{\tilde x} \leq \tilde h + Cah = h $ (here is where we make the choice of $C$ in the definition of $\tilde h$, i.e. $\tilde h: = h(1-Ca) $).
The bottom line is that for all $(x,v,w) \in \T^2 \M$ we have
\[ \eta\left( \frac{|w|_x}{\tilde h} \right) \phi_a(|v|_x ) \leq \eta\left( \frac{|\tilde w|_{\tilde x}}{h} \right) \phi_a(|v|_x ). \]
Furthermore, from the definition of $\tilde v$ it follows that
\[ |v|_{x} = |\tilde v|_{\tilde x}. \]
Thus, for all $(x,v,w)$ we have
\[ \eta\left( \frac{|w|_x}{\tilde h} \right) \phi_a(|v|_x ) \leq \eta\left( \frac{|\tilde w|_{\tilde x}}{h} \right) \phi_a(|\tilde v|_{\tilde x} ). \]
Now, from the above inequality and the fact that
\[ F(x,w,v)= f(\tilde x)- f( \exp_{\tilde x}(\tilde w) ) \]
we can then upper bound \eqref{aux:1} by
\[ ( 1+Ca) \frac{1}{h} \int_{\T^2 \M} \eta_h(|\tilde w|_{\tilde x})| \phi_a(|\tilde v |_{\tilde x})| f(\tilde x) - f(\exp_{\tilde x} (\tilde w)) | \, \dd\vol_{\T^2 \M}(x,w,v), \]
where we have also used the smallness of $a$ to expand $(1-Ca)^{m+1}$, and have used the fact that $h \leq a$. We can then change variables to rewrite the above expression as
\begin{equation}
( 1+Ca) \frac{1}{h} \int_{\T^2 \M} \eta_h(|\tilde w|_{\tilde x})| \phi_a(|\tilde v |_{\tilde x})|f(\tilde x) - f(\exp_{\tilde x} (\tilde w)) | \left| \frac{\partial \widetilde{\Psi}^{-1}(\tilde{x},\tilde{w},\tilde{v}) }{\partial (\tilde{x},\tilde{w},\tilde{v})} \right|\,\dd\vol_{\T^2 \M}(\tilde{x},\tilde{w},\tilde{v}),  
\label{aux:2}
\end{equation}
where $\left| \frac{\partial \widetilde{\Psi}^{-1}(\tilde{x},\tilde{w},\tilde{v}) }{\partial (\tilde{x},\tilde{w},\tilde{v})} \right|$ is the Jacobian of the transformation $\widetilde {\Psi}^{-1}$. As we prove in the Appendix, the Jacobian satisfies
\begin{equation}
\left | \left| \frac{\partial \widetilde{\Psi}^{-1}(\tilde{x},\tilde{w},\tilde{v}) }{\partial (\tilde{x},\tilde{w},\remmath{\tilde{w}}\addmath{\tilde{v}})} \right| - 1 \right | \leq Ca
\label{app:JacobiansPsi}
\end{equation}
for all $(\tilde{x},\tilde{w},\tilde{v}) \in \widetilde{\Psi}(\mathcal{B}_{\tilde{h},a}^2)$. 

From \eqref{app:JacobiansPsi} it follows that the term in \eqref{aux:2} can be bounded by
\[ ( 1+Ca) \frac{1}{h} \int_{\T^2 \M} \eta_h(|\tilde{w}|_{\tilde{x}})| \phi_a(|\tilde{v} |_{\tilde{x}})| f(\tilde{x}) - f( \exp_{\tilde{x}}(\tilde{w})) | \, \dd\vol_{\T^2 \M}(\tilde{x},\tilde{w},\tilde{v}). \]
This expression can be written as,
\[ (1+Ca) \frac{1}{h} \int_{\M}\int_{\T_{\tilde{x}}\M} \int_{\T_{\tilde{x}}\M} \eta_h(|\tilde{w}|_{\tilde{x}})| \phi_a(|\tilde{v}|_{\tilde{x}})| f(\tilde{x}) - f( \exp_{\tilde{x}}(\tilde{w})) | \, \dd \tilde{v}\, \dd \tilde{w} \, \dd \vol_\M(\tilde{x}), \]
multiplying and dividing its integrand by $J_{\tilde{x}}(\tilde{w})$, using the bounds on the Jacobian in \eqref{eqn:Jacobian} and the upper bound $\tau_a(\tilde{x})\leq1+Ca^2$ (derived analogously to~\eqref{aux:lowerboundtaua}), we can deduce that the above expression is smaller than 
\[ (1+Ca) \frac{1}{h} \int_{\M}\int_{\T_{\tilde{x}}\M} \int_{\T_{\tilde{x}}\M} \eta_h(|\tilde{w}|_{\tilde{x}})| \phi_a(|\tilde{v}|_{\tilde{x}})| f(\tilde{x}) - f( \exp_{\tilde{x}}(\tilde{w})) |J_{\tilde{x}}(\tilde{w}) \, \dd \tilde{v} \, \dd \tilde{w} \, \dd\vol_\M(\tilde{x}). \]
Upon integration on $\tilde{v}$, we recognize that the above term is precisely
\[ (1+Ca) \TV_h(f). \]
Putting all the estimates together, we deduce that: 
\[ \TV_{\widetilde h} (\Lambda_a f) \leq (1+ Ca)\TV_h(f) + C a \lVert f\rVert_{\Lp{\infty}(\M)} \]
as we wanted to show.
\end{proof}

Combining all propositions above we obtain the following.

\begin{corollary}\label{cor:smoothening-summary}
There exists a constant $C>0$, such that for all $0< h\leq a \leq \frac{h_\M}{2}$, we have for all $f \in \Lp{\infty}(\M)$,
\begin{enumerate}
\item $\sigma_\eta \TV(\Lambda_a f) \leq (1+ C(h^2 + a))\TV_h(f) + C \left(\frac{h}{a^2} + a \right) \lVert f \rVert_{\Lp{\infty}(\M)}$;
\item $\lVert \Lambda_a f - f \rVert_{\Lp{1}(\M)} \leq C a \TV_h(f)$.
\end{enumerate}
\end{corollary}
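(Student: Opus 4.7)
The plan is to assemble this corollary by chaining together the three preceding propositions. Statement (ii) is literally the second assertion of Proposition~\ref{lem:Smoothening}, so there is nothing to do there. The real work is (i), where we must transfer from the non-local TV of $f$ at scale $h$ to the local TV of the smoothed function $\Lambda_a f$.

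First I would feed $\Lambda_a f$, which is smooth thanks to the mollifier, into Proposition~\ref{prop:NonLocalLocalRegu} at the length scale $\widetilde h := h(1 - C_2 a)$ supplied by Proposition~\ref{prop:ConvolutionMonotone}. This produces the estimate
\[
\sigma_\eta \TV(\Lambda_a f) \;\leq\; (1 + C\widetilde h^{\,2})\, \TV_{\widetilde h}(\Lambda_a f) \;+\; C\,\widetilde h\,\|\Lambda_a f\|_{\Ck{2}(\M)}.
\]
The $\Ck{2}$ norm is then controlled by Proposition~\ref{lem:Smoothening}(i) with $k = 2$, giving $\|\Lambda_a f\|_{\Ck{2}(\M)} \leq C a^{-2} \|f\|_{\Lp{\infty}(\M)}$, which is precisely where the $h/a^2$ term in the final estimate is born (using $\widetilde h \leq h$).

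Next I would bound $\TV_{\widetilde h}(\Lambda_a f)$ by invoking Proposition~\ref{prop:ConvolutionMonotone} directly: this yields $\TV_{\widetilde h}(\Lambda_a f) \leq (1 + C_1 a)\TV_h(f) + C_1 a \|f\|_{\Lp{\infty}(\M)}$. Substituting everything back, and using $\widetilde h \leq h \leq a$ together with $(1 + C\widetilde h^{\,2})(1 + C_1 a) = 1 + O(a + h^2)$, the desired inequality
\[
\sigma_\eta\TV(\Lambda_a f) \leq (1 + C(h^2 + a))\TV_h(f) + C\!\left(\tfrac{h}{a^2} + a\right)\|f\|_{\Lp{\infty}(\M)}
\]
falls out after collecting the constants.

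There is no real obstacle here: the proof is a bookkeeping exercise, since all the analytic work has been absorbed into the three preceding propositions. The only thing to watch for is the algebraic consistency of scales — we need $\widetilde h \leq h \leq a \leq h_\M/2$ throughout so that both Proposition~\ref{prop:NonLocalLocalRegu} and Proposition~\ref{prop:ConvolutionMonotone} are applicable simultaneously, and so that the cross terms $C \widetilde h^{\,2} \cdot C_1 a \cdot \TV_h(f)$ generated when expanding the product of the two $(1 + \cdot)$ factors can be absorbed into the $C(h^2 + a)\TV_h(f)$ term rather than produce a spurious higher-order error.
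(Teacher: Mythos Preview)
Your proposal is correct and follows essentially the same route as the paper: apply Proposition~\ref{prop:NonLocalLocalRegu} to $\Lambda_a f$ at scale $\widetilde h = h(1-C_2 a)$, bound the $\Ck{2}$ norm via Proposition~\ref{lem:Smoothening}(i), then feed in Proposition~\ref{prop:ConvolutionMonotone} for $\TV_{\widetilde h}(\Lambda_a f)$ and absorb cross terms; part (ii) is exactly Proposition~\ref{lem:Smoothening}(ii). There is nothing to add.
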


\begin{proof}
From Proposition \ref{lem:Smoothening} we know that $\Lambda_a f \in \Ck{2}(\M)$ for every $f\in \Lp{\infty}(\M)$ and moreover
\[\lVert \Lambda_a f \rVert_{\Ck{2}(\M)} \leq \frac{C}{a^2}\lVert f \rVert_{\Lp{\infty}(\M)}. \]
Thus, by Proposition \ref{prop:NonLocalLocalRegu}, it follows that
\[ \sigma_\eta \TV(\Lambda_a f) \leq (1+ C\widetilde{h}^2) \TV_{\widetilde{h}}(\Lambda_a f) + \frac{C h}{a^2} \lVert f \rVert_{\Lp{\infty}(\M)}, \]
where $\widetilde{h}= h(1-Ca)$.
Using Proposition \ref{prop:ConvolutionMonotone} we deduce 
\[ \sigma_\eta \TV(\Lambda_a f) \leq (1+ C(h^2 + a))\TV_h(f) + C \left(\frac{h}{a^2} + a \right) \lVert f \rVert_{\Lp{\infty}(\M)} \]
as we wanted to show.
The second inequality is just ii) in Proposition~\ref{lem:Smoothening}.
\end{proof}

In the remainder we make use of the following two lemmas.

\begin{lemma}
\label{Lemma:SmallnessMini}
For every $C_0>0$ there exists $\beta_0>0$ and $h_\M$ such that for every $0<h<h_\M$, and for every $A\subseteq \M$ for which
\[ \frac{\TV_{h}(\one_A)}{\min \{ \vol_\M(A),\vol_\M(A^c)\}}\leq C_0, \]
we have
\[ \min \{ \vol_\M(A),\vol_\M(A^c) \} \geq \beta_0. \]
\end{lemma}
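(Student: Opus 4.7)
Since $\TV_h(\one_A)=\TV_h(\one_{A^c})$ and $\min\{\vol_\M(A),\vol_\M(A^c)\}$ is symmetric under $A\leftrightarrow A^c$, I may assume $\vol_\M(A)\leq 1/2$, so the hypothesis reads $\TV_h(\one_A)\leq C_0\vol_\M(A)$. The plan is to avoid dealing with the possibly tiny scale $h$ by transferring the estimate to a \emph{fixed} intermediate scale $a_0$, depending only on $C_0$ and $\M$, and then to rule out very small $A$ by a direct localization bound at this scale.

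Concretely, I would fix $a_0 \in (0, h_\M/2)$ (with $h_\M$ the manifold constant from Proposition~\ref{prop:monotonTV}), to be chosen later, and declare the threshold in the lemma to be $a_0$; then the assumption $h < h_\M$ becomes $h \leq a_0$. Proposition~\ref{prop:monotonTV}, applied with $a := a_0$ and $f := \one_A$, yields a constant $C_1 = C_1(\M)$ with
\[ \TV_{a_0}(\one_A) \leq C_1 \TV_h(\one_A) \leq C_1 C_0 \vol_\M(A). \]
On the other hand, the Jacobian bounds \eqref{eqn:Jacobian} give a constant $c_m = c_m(\M) > 0$ such that $\vol_\M(B_\M(x, a_0)) \geq c_m a_0^m$ for all $x \in \M$ and all sufficiently small $a_0$. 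If additionally $\vol_\M(A) \leq c_m a_0^m/2$, then $\vol_\M(A^c \cap B_\M(x, a_0)) \geq c_m a_0^m/2$ for every $x \in A$, and expanding the definition of $\TV_{a_0}$ with $\eta(t) = \one_{t \leq 1}$ gives
\[ \TV_{a_0}(\one_A) = \frac{2}{a_0^{m+1}} \int_A \vol_\M\bigl(A^c \cap B_\M(x, a_0)\bigr) \, \dd \vol_\M(x) \geq \frac{c_m}{a_0} \vol_\M(A). \]

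Comparing the two estimates forces $c_m/a_0 \leq C_1 C_0$. I would then select $a_0 = \min\{h_\M/4,\, c_m/(2 C_1 C_0)\}$, which violates this inequality, so the provisional hypothesis $\vol_\M(A) \leq c_m a_0^m/2$ must fail. The conclusion of the lemma follows with $\beta_0 := c_m a_0^m/2$. I do not foresee any real obstacle here: Proposition~\ref{prop:monotonTV} does essentially all the heavy lifting, reducing the statement to the elementary observation that at a fixed scale $a_0$ the non-local TV of a small indicator is at least of order $\vol_\M(A)/a_0$, which is incompatible with the hypothesized linear bound once $a_0$ is chosen below the manifold-dependent threshold $c_m/(C_1 C_0)$.
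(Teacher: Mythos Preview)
Your proof is correct and takes a genuinely different route from the paper. The paper argues by contradiction and compactness: assuming the conclusion fails, it produces sequences $h_k\to 0$ and $A_k$ with $\vol_\M(A_k)\to 0$, normalizes $\tilde f_k=\one_{A_k}/\vol_\M(A_k)$ so that $\|\tilde f_k\|_{\Lp{1}}=1$ and $\TV_{h_k}(\tilde f_k)\leq C_0$, and then invokes an $\Lp{1}$--compactness result for bounded nonlocal TV (from \cite{GarciaTrillos2015}) to extract a limit of the form $\alpha\one_A$ with $\alpha\vol_\M(A)=1$, which is incompatible with $\alpha_k=1/\vol_\M(A_k)\to\infty$.

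Your argument is instead direct and quantitative: you use Proposition~\ref{prop:monotonTV} to pass from the arbitrary scale $h$ to a fixed scale $a_0$, and then the elementary lower bound $\TV_{a_0}(\one_A)\geq (c_m/a_0)\vol_\M(A)$ for sets of small volume. This avoids any appeal to external compactness results and yields explicit constants $\beta_0=c_m a_0^m/2$ and threshold $a_0$ depending only on $C_0$ and $\M$, which is more in keeping with the quantitative spirit of the paper. The paper's route, on the other hand, does not need Proposition~\ref{prop:monotonTV} and would generalize more readily to settings where such a monotonicity-in-scale estimate is unavailable. One minor cosmetic point: you are overloading the symbol $h_\M$ for both the manifold constant in Proposition~\ref{prop:monotonTV} and the threshold produced by the lemma; it would read more cleanly to call the latter something like $h_0$.
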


\begin{proof}
Let us fix $C_0>0$.
For the sake of contradiction suppose the result is not true.
In that case we would be able to build a sequence $\{h_k \}_{k \in \N}$ with $h_k \rightarrow 0$ as $k \rightarrow \infty$ and a sequence of sets $\{A_k \}_{k \in \N}$ satisfying
\[ \frac{\TV_{h_k}(\one_{A_k})}{\min \{ \vol_\M(A_k),\vol_\M(A_k^c) \}}\leq C_0, \quad \min \{ \vol_\M(A_k),\vol_\M(A_k^c) \} \leq \frac{1}{k}. \]
Without loss of generality we can assume $\min \{ \vol_\M(A_k),\vol_\M(A_k^c) \}= \vol_\M(A_k)$ (for otherwise we can take complements).
Let us now introduce rescaled functions
\[ \widetilde{f}_k := \alpha_k \one_{A_k}, \quad \alpha_k := \frac{1}{\vol_\M(A_k)}, \quad k \in \N. \]
This sequence satisfies:
\begin{enumerate}
\item $\lVert \widetilde{f}_k \rVert_{\Lp{1}(\M)}=1$, for all $k \in \N$,
\item $\TV_{h_k}(\widetilde{f}_k)\leq C_0$, for all $k \in \N$. 
\end{enumerate}
Thanks to the above properties we can use the compactness result from~\cite[Lemma 4.4]{GarciaTrillos2015} to conclude that there exists $\widetilde{f} \in \Lp{1}(\M)$ such that (up to subsequence not relabeled) 
\[ \widetilde{f}_k \rightarrow_{\Lp{1}(\M)} \widetilde{f}. \]
We notice that although the result in~\cite{GarciaTrillos2015} is stated in the flat Euclidean case, it is still good enough for our purpose as one can work in local coordinates and use the compactness and smoothness of $\M$ and a gluing argument to extend the local result to a global one. 

Now, observe that the function $\widetilde{f}$ must satisfy $\lVert \widetilde{f} \rVert_{\Lp{1}(\M)}=1$.
Moreover, since the functions $\widetilde{f}_{k}$ are of the form $\widetilde{f}_k = \alpha_k \one_{A_k}$, necessarily the function $\widetilde{f}$ has the form $\widetilde{f}= \alpha \one_{A}$ for a positive real number $\alpha$.
It should also hold that
\[ \vol_\M(A_k) \rightarrow \vol_\M(A), \quad k \rightarrow \infty. \]
From this and the fact that $\alpha \vol_\M(A) =1$ we conclude that
\[ \alpha = \frac{1}{\vol_\M(A)}= \lim_{k \rightarrow \infty} \frac{1}{\vol_\M(A_k)}. \]
This however contradicts the fact that $\frac{1}{\vol_\M(A_k)} \geq k$ for all $k \in \N$.
\end{proof}

The next lemma states that the Cheeger constant $\CM$ can also be written as the minimum value of an optimization problem over BV functions taking values on $[0,1]$.
The proof is standard and we present it for the convenience of the reader. We refer the reader to Section 2 in \cite{Chung1997} for a proof of an analogous result in the graph setting.

\begin{lemma}
\label{lem:CheegerFunctions}
The Cheeger constant $\CM$ admits the representation
\[ \CM = \min_{f:\M \rightarrow [0,1]} \frac{\TV(f)}{\lVert f- m_1(f)\rVert_{\Lp{1}(\M)}} \]
where in the above, $m_1(f)$ is a median of $f$, i.e., any number $c$ in $[0,1]$ where the minimum
\begin{equation} \label{eq:median}
\min_{c\in [0,1]} \lVert f -c \rVert_{\Lp{1}(\M)}
\end{equation}
is achieved.
\end{lemma}
\begin{proof}
A simple computation shows that for a BV set $E \subseteq \M $ one has
\[  \lVert \mathds{1}_{E} -m_1(\mathds{1}_E) \rVert_{\Lp{1}(\M)} = \min \{ \vol_\M(E), \vol_\M(E^c) \} \]
and thus is clear that
\[ \mathcal{C}_\M \geq \min_{f:\M \rightarrow [0,1]} \frac{\TV(f)}{\lVert f- m_1(f)\rVert_{\Lp{1}(\M)}}.  \]
Conversely, for a BV function $f: \M \rightarrow [0,1]$ 
we let $g:= f -m_1(f)$. We claim
\begin{align*}
\vol_\M(\{g\leq t\}) & \leq \vol_\M(\{g>t\}) && \hspace{-3cm} \forall t\leq 0 \\
\vol_\M(\{g\leq t\}) & \geq \vol_\M(\{g>t\}) && \hspace{-3cm} \forall t\geq 0.
\end{align*}
Since the LHS is increasing and the RHS is decreasing then it is enough to show that
\[ \vol_\M(\{g\leq 0\}) = \vol_\M(\{g>0\}). \]
This is just the optimality condition for~\eqref{eq:median} so it holds by definition of $m_1(f)$.
Now,
\begin{align*}
 \begin{split}
 \TV(f)= \TV(g) & = \int_{-\infty}^{\infty}\TV( \mathds{1}_{ \{ g\leq t \}}) \, \dd t \\
 & \geq \mathcal{C}_\M\left( \int_{-\infty}^0 \vol_{\M}( \{ g \leq t  \}) \dd t + \int_{0}^\infty \vol_{\M}( \{ g > t  \}) \, \dd t   \right) \\
 & = \mathcal{C}_\M \lVert g\rVert_{\Lp{1}(\M)}.
 \end{split}
\end{align*}
Hence,
\[ \frac{\TV(f)}{\lVert f- m_1(f) \rVert_{\Lp{1}(\M)}} \geq \mathcal{C}_\M. \] 	
From the fact that $f$ was arbitrary we obtain the desired inequality.
\end{proof}

\section{Proofs of the Main Results}
\label{sec:ProofMainResults}

\add{Sections~\ref{sec:Upper} and~\ref{sec:Interpolation} combine to prove Theorem~\ref{thm:cheeger-constant-conv}. In Section~\ref{sec:CheegerCutsProof} we prove Theorem~\ref{thm:rates-Cheeger-Cuts}.}

\subsection{Upper Bound}
\label{sec:Upper}

In this section we quantify the relationship
\[  \mathcal{C}_{n,\veps} \lesssim  \mathcal{\sigma_\eta}\CM. \]
For this purpose we discuss pointwise estimates for the approximation of $\TV(f)$ with $\GTV_{n,\e}(f)$ for a \textit{fixed} BV function $f : \M \rightarrow \R$. We begin by stating an estimate for $|\GTV_{n,\e}(f)- \E(\GTV_{n,\e}(f))|$.
\begin{proposition}
\label{prop:VarianceEstimate}
\add{There is a constant $C>0$ such that for all $0<\veps<\veps_0$ (i.e. $\veps$ small enough), all $0<\zeta < \zeta_0$ (all $\zeta$ small enough) and all $f\in\BV(\M)\cap\Lp{\infty}(\M)$ satisfying $n\zeta\eps^{\frac{m+1}{2}}\geq \l\|f\|_{\Lp{\infty}(\M)}+\TV(f)\r \geq \sqrt{\zeta}$, we have: 
\[
  \Prob\left( \left| \frac{n}{n-1}\GTV_{n,\e}(f) - \widetilde{\TV}_\veps(f)  \right|   \geq   ( 1+ \frac{1}{R(f)} ) \zeta \right) \leq C \exp\left(-\frac{Cn{\zeta} \min\{\e^{\frac{m+1}{2}},\eps\zeta\}}{(R(f))^2}\right),
\]}
where
\begin{equation*}
\widetilde{\TV}_{\veps}(f):= \frac{1}{\veps^{m+1}}\int_\M \int_\M |f(x) - f(y)| \eta\left( \frac{|x-y|}{\veps} \right)\dd \vol_\M(x) \dd \vol_\M(y), \quad f \in \Lp{1}(\M), 
\end{equation*}
and \add{
\[ R(f):= \|f\|_{\Lp{\infty}(\M)} + \TV(f).  \]
}
In addition, with probability at least  $1-C \exp\left(-\frac{Cn {\zeta} \min\{ \e^{\frac{m+1}{2}},\eps\zeta\}}{(R(f))^2}\right)$ we have that \add{
\[
  \GTV_{n,\e}(f) \leq \sigma_\eta(1+C\e^2) \TV(f) + (1+R(f)) {\zeta}.
\]
}
\end{proposition}

\begin{proof}
When $f$ is an indicator function, these types of estimates have been previously proved in \cite{trillos2017estimating}. We follow the same proof, highlighting changes that occur due to considering general BV functions.

Since the variables $x_1, \dots, x_n$ are i.i.d. samples, $\GTV_{n,\e}(f)$ is a \emph{$U$-statistic} \cite{GineUStats} of order two as it can be written as
\[
  \GTV_{n,\e}(f) = \frac{1}{n^2}\sum_{i\neq j} \phi(x_i,x_j),\qquad \phi(x,y):= \frac{1}{\veps^{m+1}}\eta\left( \frac{|x-y|}{\e}\right) |f(x)-f(y)|.
\]
Notice that \[ \E(\GTV_{n,\e}(f)) = \frac{n-1}{n} \widetilde{\TV_{\veps}}(f).\]
We remark that the only difference between $\widetilde{\TV}_\veps(f)$ and $\TV_\veps(f)$ defined in \eqref{eqn:NonLocalTvseminorm} is the fact that in the former we use the Euclidean distance to determine the proximity of points whereas for the latter we use the geodesic distance. We will first provide concentration inequalities for $\widetilde{\TV}_\e(f)$, and then relate back to $\TV_\veps(f)$.

Using the Hoeffding decomposition of U-statistics, we will write
\[ \frac{n}{n-1}\GTV_{n,\e}(f) - \widetilde{\TV}_\veps(f)= 2U_{1} + U_2  \]
where 
\[ U_1:= \frac{1}{n}\sum_{i=1}^n g_{1}(x_i) \]
and
\[ U_2:= \frac{2}{n(n-1)}\sum_{i=1}^n \sum_{j>i}g_{2}(x_i, x_j). \]
$U_1$ and $U_2$ are \textit{canonical} $U$-statistics of order one and two respectively. In the above,
\[ g_1(x):= \overline{\phi}(x) -\widetilde{\TV}_\veps(f) \]
\[ g_2(x,y):= \phi(x,y) - \overline{\phi}(x) - \overline{\phi}(y)+ \widetilde{\TV}_\veps(f) \]
\[ \overline{\phi}(x):= \int_{\M}\phi(x,y) \, \dd \vol_{\M}(y).\]
We can then make use of the concentration inequalities for canonical $U$ statistics discussed in section 3 of \cite{GineUStats}, which imply that: for all $t>0$,
\[ \Prob\left(|U_1| \geq \frac{t}{n} \right) \leq K \exp\left(\frac{- t^2}{K(t A_1 + B_1^2)} \right), \]
and, for all $t>\frac{B_2}{K}$,
\[ \Prob\left(|U_2| \geq \frac{t}{n(n-1)} \right) \leq K \exp\left(-\frac{1}{K} \min \left \{ \left(\frac{t}{A_2} \right)^{1/2}, \frac{t}{B_2}, \left( \frac{t}{C_2} \right)^{2/3}   \right \} \right),  \]
for a universal constant $K>0$. In the above,
\[A_1:= \lVert g_1 \rVert_{\Lp{\infty}(\M)}, \quad B_1:= \sqrt{n}\lVert g_1 \rVert_{\Lp{2}(\M)}, \]
and
\[A_2:= \lVert g_2 \rVert_{\Lp{\infty}(\M\times\M)}, \quad B_2:= n \lVert g_2 \rVert_{\Lp{2}(\M \times \M)},  \quad (C_2)^2:=n \left \lVert \int_{\M}g_2^2(\cdot,y) \, \dd \vol_\M(y) \right \rVert_{\Lp{\infty}(\M)}. \]
We remark that the first inequality controls the order one U-statistic using precisely Bernstein's inequality.

We seek to control all the above quantities using estimates as in \cite{trillos2017estimating}, with the modification that we allow $f$ to be a function of bounded variation. For the $\Lp{\infty}$ estimates, by recalling the definition of $\phi,\bar \phi$, \add{we see that}
\[
 A_1 \leq \frac{C \lVert f \rVert_{\Lp{\infty}(\M)}}{\veps},\qquad  A_2 \leq \frac{C\lVert f \rVert_{\Lp{\infty}(\M)}}{\veps^{m+1}}.
\]
On the other hand, in estimating the squared terms, one needs to estimate various integrals of $\phi^2$ and $\bar \phi^2$. In the case of $B_2$, after some straightforward expansions we obtain
\[
  B_2^2 = n^2 \left(\int_\M \int_\M \phi(x,y)^2 \,\dd\vol_\M(x) \,\dd\vol_\M(y) + (\widetilde{\TV}_\e(f))^2 -2 \int_\M \bar \phi^2(x)\, \dd\vol_\M(x)\right).
\]
Using Jensen's inequality, and then bounding $\phi^2 \leq \frac{2\|f\|_{\Lp{\infty}(\M)}}{\e^{m+1}}\phi$, we can write, for $\eps\leq \eps_0$,
\[
  B_2^2 \leq Cn^2\left(\frac{\|f\|_{\Lp{\infty}(\M)}\widetilde{\TV}_\e(f)}{\e^{m+1}} + (\widetilde{\TV}_\e(f))^2\right) \leq Cn^2 \frac{(\|f\|_{\Lp{\infty}(\M)} +\widetilde{\TV}_\e(f))^2 }{\e^{m+1}}
\]
Similarly, by expanding $g_2$, we can bound, for $\eps\leq \eps_0$,
\begin{align*}
  C_2^2 &\leq 2n \left( \left\|\int_\M \phi(\cdot,y)^2 \, \dd\vol_\M(y)\right\|_{\Lp{\infty}(\M)} \hspace{-8pt} +  \left\|\int_\M \bar\phi(y)^2\,\dd\vol_\M(y)\right\|_{\Lp{\infty}(\M)} \hspace{-8pt} +  \left\| \bar\phi^2\right\|_{\Lp{\infty}(\M)} + \widetilde{\TV}_\e(f)^2 \right)\\
  &\leq Cn \left( \frac{\|f\|_{\Lp{\infty}(\M)}^2}{\e^{m+2}} + \frac{\|f\|_{\Lp{\infty}(\M)}\widetilde{\TV}_\e(f)}{\e^{m+1}} + \widetilde{\TV}_\e(f)^2 \right) \leq Cn \frac{(\|f\|_{\Lp{\infty}(\M)} +\widetilde{\TV}_\e(f))^2 }{\e^{m+2}}.
\end{align*}
To bound $B_1$\rem{, by using the same change of variables argument as in Lemma 2.1 in} \cite{trillos2017estimating}, \rem{but inserting the appropriate $\Lp{\infty}$ bound on $f$, one may show that} \add{we first compute}
\begin{align*}
\addmath{\int_{\M} \bar{\phi}^2(x) \, \dd\vol_{\M}(x)} & \addmath{= \frac{1}{\eps^{2(m+1)}} \int_{\M} \l \int_{\M} \eta\l\frac{|x-y|}{\eps}\r |f(x) - f(y)| \, \dd \vol_{\M}(y) \r^2 \, \dd \vol_{\M}(x) }\\
 & \addmath{ \leq \frac{1}{\eps^{2(m+1)}} \int_{\M} \l \int_{\M} \eta\l\frac{d_{\M}(x,y)}{C\eps}\r |f(x) - f(y)| \, \dd \vol_{\M}(y) \r^2 \, \dd \vol_{\M}(x) }\\
 & \addmath{ \hspace{4cm} \text{since } C|x-y|\geq d_{\M}(x,y) }\\
 & \addmath{ = \frac{C}{\eps^2} \int_{\M} \l \int_{B(0,1)\subseteq \T_x\M} \eta(|w|_x) |f(x)-f(\exp_x(C\eps w))| J_x(C\eps w) \, \dd w \r^2 \, \dd \vol_{\M}(x). }
\end{align*}
\add{Now, since}
\begin{align*}
& \addmath{\int_{B(0,1)\subseteq \T_x\M} \eta(|w|_x) |f(x)-f(\exp_x(C\eps w))| J_x(C\eps w) \, \dd w} \\
& \hspace{1cm} \addmath{ \leq 2\|f\|_{\Lp{\infty}(\M)} \int_{B(0,1)\subseteq \T_x\M} \eta(|w|_x) (1+C|\eps w|^2) \, \dd w }\\
& \hspace{1cm} \addmath{ \leq C\|f\|_{\Lp{\infty}(\M)} },
\end{align*}
\add{then}
\begin{align*}
\addmath{\int_{\M} \bar{\phi}^2(x) \, \dd\vol_{\M}(x)} & \addmath{\leq \frac{C\|f\|_{\Lp{\infty}(\M)}}{\eps^2} \int_{\M} \int_{B(0,1)\subseteq \T_x\M} \eta(|w|_x) |f(x)-f(\exp_x(C\eps w))| J_x(C\eps w) \, \dd w \, \dd \vol_{\M}(x) }\\
 & \addmath{= \frac{C\|f\|_{\Lp{\infty}(\M)}}{\eps^{2+m}} \int_{\M} \int_{\M} \eta\l\frac{d_{\M}(x,y)}{C\eps}\r |f(x)-f(y))| \, \dd \vol_{\M}(y) \, \dd \vol_{\M}(x) }\\
 & \addmath{= \frac{C\|f\|_{\Lp{\infty}(\M)}}{\eps} \TV_{C\eps}(f) }\\
 & \addmath{\leq \frac{C\|f\|_{\Lp{\infty}(\M)}}{\eps} \TV(f) }
\end{align*}
\add{by Proposition~\ref{prop:biasineq}.}
Hence expanding again we obtain
\[ B_1^2 \leq \frac{Cn}{\e}\left(\TV(f)\|f\|_{\Lp{\infty}(\M)} + \widetilde{\TV}_\e(f)^2\right). \]
Now we provide a way to compare $\TV(f)$ and $\widetilde{\TV}_\e(f)$.
First, we recall (see~\cite[Proposition 2]{garciatrillos19}) that 
\[ d_\M(x,y) \leq |x-y| + C|x-y|^3. \]
In turn, after applying Proposition~\ref{prop:biasineq} we obtain
\begin{equation}
\widetilde{\TV}_{\veps}(f)\leq \left(1+C\veps^2 \right)^{m+1} \TV_{\veps(1+ C\veps^2)}(f) \leq (1+ C\veps^2)\TV_{\veps(1+ C\veps^2)}(f) \leq (1+C \e^2)\sigma_\eta \TV(f), 
\label{eqn:TVEuclideanGeodesic}
\end{equation}
\add{where in the above the constant $C$ changes from inequality to inequality.} We obtain:
\[ B_1^2 \leq \frac{Cn}{\eps} \l\TV(f)\|f\|_{\Lp{\infty}(\M)} +\TV(f)^2\r \leq\frac{Cn}{\eps}\l\TV(f)+\|f\|_{\Lp{\infty}(\M)}\r^2, \]
and
\begin{align*}
B_2^2 & \leq \frac{Cn^2}{\eps^{m+1}}\l\|f\|_{\Lp{\infty}(\M)}+\TV(f)\r^2 \\
C_2^2 & \leq \frac{Cn}{\eps^{m+2}}\l\|f\|_{\Lp{\infty}(\M)}+\TV(f)\r^2.
\end{align*}

Hence, for $t>0$, we find that
\begin{equation} \label{eq:U1}
\Prob\left(|U_1| \geq \frac{t}{n} \right) \leq C\exp\left(\frac{- Ct^2}{t \frac{\|f\|_{\Lp{\infty}(\M)}}{\e} + n  \frac{(\|f\|_{\Lp{\infty}(\M)} +\TV(f))^2 }{\e}} \right),
\end{equation}
and, for $t\geq \frac{Cn}{\eps^{\frac{m+1}{2}}}\l\|f\|_{\Lp{\infty}(\M)}+\TV(f)\r$,
\begin{equation} \label{eq:U2}
\Prob\left(|U_2| \geq \frac{t}{n(n-1)} \right) \leq C e^{-CF_{t,\eps,n}(f)},
\end{equation}
where
\[ F_{t,\eps,n}(f) = \min\lb\l\frac{t\e^{m+1}}{\|f\|_{\Lp{\infty}(\M)}} \r^{1/2}, \frac{t\e^{\frac{m+1}{2}}}{n(\|f\|_{\Lp{\infty}(\M)} +\TV(f))}, \l\frac{t\e^{\frac{m+2}{2}}}{ (\|f\|_{\Lp{\infty}(\M)} +\TV(f)) \sqrt{n}} \r^{2/3} \rb. \]
Choosing $t=\frac{n(n-1)\zeta}{2\l \|f\|_{\Lp{\infty}}+\TV(f)\r}$ in \eqref{eq:U2}, and from the upper bound $\sqrt{\zeta}\leq\|f\|_{\Lp{\infty}(\M)}+\TV(f)$ \add{(which is the same as $ \zeta^{1/3}\leq\l\|f\|_{\Lp{\infty}(\M)}+\TV(f)\r^{2/3}$)}, we can infer that 
\[ F_{t,\eps,n}(f) \geq \frac{Cn\zeta\eps^{\frac{m+1}{2}}}{\l\|f\|_{\Lp{\infty}(\M)}+\TV(f)\r^2}; \]
\add{indeed, from the first term on the right hand side in the expression for $F_{t,\veps, n}(f)$ we get a term $\sqrt{\zeta}$ which can be bounded from below by $\zeta (\l\|f\|_{\Lp{\infty}(\M)}+\TV(f)\r)^{-1}$; from the third term, on the other hand, we get a term $\zeta^{2/3}$ which is bounded from below by $\zeta \l\|f\|_{\Lp{\infty}(\M)}+\TV(f)\r^{-2/3}$, and we use the smallness of $\veps$ to conclude that $\veps^{(m+2)/3} \gtrsim \veps^{(m+1)/2}$. }

Hence, choosing $t=\frac{n\zeta}{2}$ in~\eqref{eq:U1} and $t=\frac{n(n-1)\zeta}{\l \|f\|_{\Lp{\infty}}+\TV(f)\r}$ in~\eqref{eq:U2} implies \add{
\begin{align*}
\Prob\l \la\frac{n}{n-1}\GTV_{n,\eps}(f) - \widetilde{\TV}_\eps(f)\ra \geq (1+\frac{1}{R(f)})\zeta\r & = \Prob\l |2U_1+U_2|\geq (1+\frac{1}{R(f)}) \zeta \r \\
 & \leq \Prob\l |U_1|\geq \frac{\zeta}{2} \text{ or } |U_2|\geq \frac{\zeta}{R(f)}\r \\
 & \leq \Prob\l|U_1|\geq\frac{\zeta}{2}\r + \Prob\l|U_2|\geq\frac{\zeta}{R(f)}\r \\
 & \leq C\exp\l-\frac{Cn\zeta^2\eps}{\l\|f\|_{\Lp{\infty}(\M)}+\TV(f)\r^2}\r \\
 & \hspace{1cm} + C\exp\l-\frac{Cn\zeta\eps^{\frac{m+1}{2}}}{\l\|f\|_{\Lp{\infty}(\M)}+\TV(f)\r^2}\r.
\end{align*}
}
%
Using Equation \eqref{eqn:TVEuclideanGeodesic} then implies that with probability at least $1-C \exp\left(-\frac{Cn\zeta \min\{\e^{\frac{m+1}{2}},\eps\zeta\}}{(\|f\|_{\Lp{\infty}(\M)} + \TV(f))^2}\right)$ we have that \add{
\[ \GTV_{n,\e}(f) \leq \sigma_\eta(1+C\e^2) \TV(f) + (1+\frac{1}{R(f)})\zeta. \qedhere \]
}
%
%
%
%
%
%
%
%
%
%
%
%
\end{proof}

%
%
%
%
%
%
%
%

\begin{remark}
It is possible to get quantitative estimates for the bias of the estimator $\GTV_{n,\e}(\remmath{u}\addmath{f})$ under smoothness assumptions on the function $f$.
In~\cite{arias2012normalized} a similar estimate has been carried out for functions of the form $f= \one_A$ where $A$ is a $\BV$ set, provided that the boundary of $A$ is smooth (see, e.g.,~\cite[Lemma 6]{arias2012normalized}).
An improved version of these bias estimates appears in~\cite{trillos2017estimating} (at least in the flat Euclidean setting).
For smooth functions $f$ a simple Taylor expansion allows us to obtain error estimates for the difference $| \TV(f) - \E(\GTV_{n,\e}(f)))|$.
However, for a general $\BV$ function for which we may not have information on its regularity, these estimates can not be used.
Fortunately, thanks to Proposition \ref{prop:biasineq} we do not need an estimate on the bias of the estimator.
\end{remark}


\add{
\begin{remark}
\label{rem:LocalGeomConstants}
It is worth highlighting that the constants $C$ in the statement of Proposition \ref{prop:VarianceEstimate} depend only on the dimension of the manifold $\M$, on the reach of $\M$ \addii{(where the reach is the largest number $R$ such that any point closer than $R$ to the manifold has a unique nearest point on the manifold)} to measure the discrepancy between Euclidean and geodesic distances in $\M$, on an upper bound on the absolute value of sectional curvatures of $\M$, and on the injectivity radius of $\M$. The aforementioned geometric quantities determine how small $\veps$ must be for all the comparisons between non-local and local total variation energies discussed in section \ref{sec:nonlocal} to hold.  
\end{remark}
}

\begin{proof}[Proof \add{of} upper bound in Theorem \ref{thm:cheeger-constant-conv}]
For simpler notation in this proof, for $A \subset \M$ we write $\Bal(A) = \min \{ \vol_\M(A),1 - \vol_\M(A)\}$ and $\Bal_n(A) = \min\{\nu_n(A_n), \nu_n(A^c)\}$.
For any fixed measurable set $A$ one can use Hoeffding's inequality to show that, with probability at least $1-2\exp(-2t^2 \addmath{(\Bal(A))^2} n)$, \add{$|\Bal(A) - \Bal_n(A)| \leq t \Bal(A)$}.

We now let $A=A^*$ be a solution to the continuum Cheeger problem and set \add{$f= \mathds{1}_{A^*}$ to be plugged in Proposition \ref{prop:VarianceEstimate};} existence of solutions to the Cheeger problem on $\M$ can be guaranteed using standard tools in the calculus of variations. Moreover, as the manifold $\M$ is connected, compact, and smooth we necessarily have $\Bal(A^*) > 0$.
By the definition of $\mathcal{C}_{n,\veps}$, and Proposition~\ref{prop:VarianceEstimate}, with probability at least \add{$1-2\exp(-2t^2 (\Bal(A^*))^2 n)-C \exp\left(-\frac{n}{R(f)}\zeta \min\{\e^{\frac{m+1}{2}},\eps\zeta\}\right)$ we have}
\begin{align}
\begin{split}
\mathcal{C}_{n,\veps} & \leq  \frac{\GTV_{n,\e}(\one_{A^*})}{\Bal_n(A^*)} \\
 & \leq \frac{(1 + C\e^2) \sigma_\eta \TV(\mathds{1}_{A^*}) + \addmath{ (1+ \frac{1}{R(f)})} \zeta }{\Bal_n(A^*)} \\
 & \leq  (1+2t) \left(\frac{(1 + C\e^2) \sigma_\eta \TV(\mathds{1}_{A^*}) + \addmath{(1+ \frac{1}{R(f)})} \zeta }{\Bal(A^*)}\right)
 \\
 & \leq \frac{\sigma_\eta \TV(\mathds{1}_{A^*})}{\Bal(A^*)} + C\left(\e^2 +  \zeta +  t \right)   \\
 & = \sigma_\eta \CM + C\left(\e^2 + \zeta + t\right), \label{eqn:cheeger-upper}
\end{split}
\end{align}
provided $0<t<1/2$. We can then select $t=\eps^{\frac{m+1}{4}}\zeta^{\frac{1}{2}}$ and obtain the desired result. \add{We notice that the constant $C$ appearing in front of $(\veps^2 +\zeta +t$) depends on the geometric quantities mentioned in Remark \ref{rem:LocalGeomConstants} as well as on the total variation and balance term of a minimizer $A^*$ of the continuum problem (i.e. a quantity that depends on the manifold $\M$).} 

\end{proof}

\subsection{Lower Bound}
\label{sec:Interpolation}

In this section we quantify the relationship
\[ \mathcal{\sigma_\eta}\CM\lesssim \mathcal{C}_{n,\veps}. \]
For that purpose we introduce an interpolation operator $\mathcal{I}_a$ mapping discrete functions to functions on $\M$ while reducing the total variation energy. More concretely, we construct a map 
\[ \mathcal{I}_a : \Lp{1}(\nu_n) \rightarrow \Ck{\infty}(\M) \]
with the crucial property that for \textit{all} $u \in \Lp{1}(\nu_n)$ we have (roughly speaking)
\[\sigma_\eta \TV( \mathcal{I}_a u) \lesssim \GTV_{n,\e}(u) \]
and for which $u \approx \I_a(u)$. Here $\sigma_\eta$ is as defined in \eqref{def:sigmaeta}. These estimates hold with very high probability, as described in more detail in Proposition \ref{prop:InterpMap} below. The interpolation operator takes the form 
\[ \mathcal{I}_a : \Lp{1}(\nu_n) \rightarrow \Ck{\infty}(\M) \]
\[ \mathcal{I}_a u = \Lambda_a (u\circ T_n) , \]
where $T_n: \M \rightarrow \M_n$ is a suitable transport map as defined in the following proposition, and $\Lambda_a$ is the smoothing operator defined in \eqref{def:Lambda}. The proof of the next proposition can be found in~\cite[Proposition \rem{2.10} \add{2.11}]{CalderTrillos}.

\begin{proposition}
\add{Let $\M$ be a smooth, connected, orientable, compact, manifold of dimension $m$ embedded in $\R^d$.
Then, there exists constants (that may depend on $\M$) $\theta_0,\delta_0,C,c,c'>0$ such that if $\delta\leq \delta_0$ , $ c' \log(n)/n \leq \theta^2 \delta^m $ and $\theta_0\geq \theta>0$ we have, with probability at least $1-ne^{-cn\theta^2\delta^m}$, that}
\rem{Suppose that $\M$ satisfies the assumptions at the beginning of Section \ref{sec:problemsetup} and suppose that $\delta, \theta>0$ are small enough quantities with $\delta\geq n^{-\frac{1}{m}}$. Then, with probability greater than $1- n \exp( - C n \theta^2 {\delta}^m )$,}
there exists a probability measure $\widetilde \nu_n$ with density function $\widetilde \rho _n : \M \rightarrow \R$
\rem{such that} \add{and a transport map $T_n$ between $\tilde{\nu}_n$ and $\nu_n$ (written $T_{n \sharp}\tilde \nu_n = \nu_n$) such that}
\begin{equation} 
\sup_{x \in \M} d_\M(x, T_n(x)) \leq {\delta} , 
\label{def:OT}
\end{equation}
and such that
\begin{equation}
\lVert 1 - \widetilde{\rho}_n \rVert_{\Lp{\infty}(\M)} \leq C \left( \theta +  \delta \right).
\label{eqn:densitiesbound}
\end{equation}
\label{prop:AuxiliaryDensity}
\end{proposition}


\begin{proposition}
\label{prop:InterpMap}
Let $\Lambda_a$ be as in \eqref{def:Lambda} for $a>0$ small enough and for given $\theta,\delta$ define $\I_a : \Lp{2}(\nu_n) \to \Ckc{\infty}(\M)$ by
\begin{equation}
\I_a(u) := \Lambda_a (u \circ T_n)
\end{equation}
where $T_n$ is an optimal transport map as in Proposition {\ref{prop:AuxiliaryDensity}}. Then, with probability at least $1- n \exp\left( - C n \theta^2 {\delta}^{m} \right)$, for every $u: \remmath{X}\addmath{\M_n} \rightarrow \R$ we have
\begin{enumerate}
\item $\lVert\I_a(u)\|_{\Ck{k}(\M)} \leq Ca^{-k} \|u\|_{{\Lp{\infty}(\nu_n)}}$.
\item $\|u\circ T_n- \I_a(u)\|_{\Lp{1}(\M)} \leq Ca\GTV_{n,\e}(u)$.
\item $\sigma_\eta \TV(\mathcal{I}_a u ) \leq (1+ C(\veps^2 +a + \frac{{\delta}}{\veps} + \theta ) ) \GTV_{n,\veps}(u) + C\left(\frac{\veps}{a^2}+a \right)\lVert u \rVert_{\Lp{\infty}(\nu_n)}$. 
\end{enumerate}
\end{proposition}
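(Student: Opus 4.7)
The plan is to reduce all three claims to the estimates for $\Lambda_a$ already proved (Lemma~\ref{lem:Smoothening}, Proposition~\ref{prop:ConvolutionMonotone}, and Corollary~\ref{cor:smoothening-summary}), applied to the bounded function $f := u\circ T_n \in \Lp{\infty}(\M)$. First, I invoke Proposition~\ref{prop:AuxiliaryDensity} to obtain, with probability at least $1-n\exp(-cn\theta^2\delta^m)$, a transport map $T_n$ with $\sup_{x\in\M} d_\M(x,T_n(x))\leq \delta$ and an auxiliary density $\widetilde\rho_n$ satisfying $\|1-\widetilde\rho_n\|_{\Lp{\infty}(\M)}\leq C(\theta+\delta)$. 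Everything that follows is deterministic on this event. Item (i) is then immediate: since $\|u\circ T_n\|_{\Lp{\infty}(\M)} = \|u\|_{\Lp{\infty}(\nu_n)}$, part (i) of Lemma~\ref{lem:Smoothening} applied to $f = u\circ T_n$ yields $\|\I_a u\|_{\Ck{k}(\M)} = \|\Lambda_a f\|_{\Ck{k}(\M)} \leq Ca^{-k}\|u\|_{\Lp{\infty}(\nu_n)}$.

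The core of the argument is a comparison between $\TV_h(u\circ T_n)$ and $\GTV_{n,\veps}(u)$ for a conveniently chosen length scale $h$. Set $h := \veps - 2\delta$; the hypothesis $\delta \leq \veps/4$ ensures $\veps/2 \leq h \leq \veps \leq h_\M$. The triangle inequality for the geodesic distance together with $d_\M(x,T_n(x))\leq\delta$ gives, whenever $d_\M(x,y)\leq h$,
\[
|T_n(x)-T_n(y)| \;\leq\; d_\M(T_n(x),T_n(y)) \;\leq\; h + 2\delta \;=\; \veps,
\]
so the indicator $\eta(d_\M(x,y)/h)$ is dominated by $\eta(|T_n(x)-T_n(y)|/\veps)$. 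Using that $(T_n)_{\#}\widetilde\nu_n = \nu_n$ and that $d\vol_\M = \widetilde\rho_n^{-1}\, d\widetilde\nu_n$ with $\widetilde\rho_n^{-1}\leq 1+C(\theta+\delta)$, I change variables twice to obtain
\[
\TV_h(u\circ T_n) \;\leq\; \frac{(1+C(\theta+\delta))^2}{h^{m+1}}\cdot\frac{1}{n^2}\sum_{i,j}\eta\!\left(\frac{|x_i-x_j|}{\veps}\right)|u(x_i)-u(x_j)|.
\]
Using $(\veps/h)^{m+1} = (1-2\delta/\veps)^{-(m+1)}\leq 1 + C\delta/\veps$, this simplifies to
\[
\TV_h(u\circ T_n) \;\leq\; \Bigl(1 + C\bigl(\tfrac{\delta}{\veps}+\theta+\delta\bigr)\Bigr)\GTV_{n,\veps}(u).
\]

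With this key estimate in hand, items (ii) and (iii) follow by applying Corollary~\ref{cor:smoothening-summary} to $f = u\circ T_n$ at scale $h = \veps-2\delta$. Part (ii) of the corollary gives
\[
\|u\circ T_n - \I_a u\|_{\Lp{1}(\M)} \;\leq\; Ca\,\TV_h(u\circ T_n) \;\leq\; Ca\,\GTV_{n,\veps}(u),
\]
proving (ii) (the extra factor $1+C(\tfrac{\delta}{\veps}+\theta+\delta)$ is absorbed into $C$). Part (i) of the corollary gives
\[
\sigma_\eta \TV(\I_a u) \;\leq\; (1+C(h^2+a))\TV_h(u\circ T_n) + C\!\left(\tfrac{h}{a^2}+a\right)\|u\|_{\Lp{\infty}(\nu_n)},
\]
into which I substitute the comparison above and use $h\leq\veps$ to obtain exactly the bound claimed in (iii).

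The only non-routine step is the change-of-variables argument for $\TV_h(u\circ T_n)$. The mild subtleties are: choosing $h$ slightly smaller than $\veps$ so the distortion of $T_n$ does not push pairs outside the graph connectivity radius, and correctly tracking the density ratio $\widetilde\rho_n^{-1}$ on both factors of the product measure. Everything else is bookkeeping with the already established properties of $\Lambda_a$.
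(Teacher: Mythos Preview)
Your proof is correct and follows essentially the same approach as the paper: item~(i) is immediate from Lemma~\ref{lem:Smoothening}, and for (ii) and (iii) you set $h=\veps-2\delta$, use the transport map and density bound from Proposition~\ref{prop:AuxiliaryDensity} to obtain the key comparison $\TV_h(u\circ T_n)\leq (1+C(\tfrac{\delta}{\veps}+\theta+\delta))\GTV_{n,\veps}(u)$, and then invoke Corollary~\ref{cor:smoothening-summary}. The only cosmetic difference is that the paper writes the comparison as a lower bound on $\GTV_{n,\veps}$ rather than an upper bound on $\TV_h$, and derives the kernel domination via the Euclidean rather than geodesic triangle inequality first; the content is identical.
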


\begin{proof}
i) is a direct consequence of Proposition \ref{lem:Smoothening}.
To show ii) and  iii) we let 
\[ h:= \veps - 2 {\delta} >0 \]
and $\widetilde{\nu}_n$ be as in Proposition~\ref{prop:AuxiliaryDensity}.
From the fact that $T_{n \sharp} \widetilde{\nu}_n=\nu_n$ we can write for an arbitrary $u: \M \rightarrow \R$ 
\[ \GTV_{n,\veps}(u) = \frac{1}{\veps} \int_{\M}\int_{\M} \eta_\eps\l|T_n(x)-T_n(y)|\r |u(T_n(x)) - u(T_n(y))| \widetilde{\rho}_n(y) \widetilde{\rho}_n(x)\,\dd\vol_\M(y)\,\dd\vol_{\M}(x). \]
Now we notice that if $x,y \in \M$ are such that $d_\M(x,y)\leq h$ then necessarily $|T_n(x) - T_n(y)| \leq \veps$ (because $|x- T_n(x)|$, $|x-y|$ are bounded by $\delta$, $d_\M(x,y)$ respectively).
This means that for any $x,y \in \M$ it holds
\[ \eta\left( \frac{d_{\M}(x,y)}{h} \right) \leq \eta\left( \frac{|T_n(x)-T_n(y)|}{\veps} \right), \]
and thus
\[ \GTV_{n,\veps}(u)\geq\frac{h^m}{\veps^{m+1}}\int_\M \int_\M \eta_h\l d_\M(x,y)\r|u \circ T_n(x) - u \circ T_n(y)|\widetilde{\rho}_n(x)\widetilde \rho_n(y)\,\dd\vol_\M(y)\,\dd\vol_\M(x). \]
We now use the second inequality in Proposition \ref{prop:AuxiliaryDensity} to conclude that
\[ \GTV_{n,\veps}(u)\geq \left(1- \frac{2{\delta}}{\veps} \right)^{m+1}\left( 1 - C(\theta+{\delta})\right) \TV_{h}(u \circ T_n). \]
Using Corollary \ref{cor:smoothening-summary} then proves ii).

Again using Corollary \ref{cor:smoothening-summary} and the fact that $h \leq \veps$ we conclude that 
\begin{align*}
\sigma_\eta \TV\left(\Lambda_a (u \circ T_n) \right) & \leq (1+ C(\veps^2 + a))\left(1- \frac{2{\delta}}{\veps} \right)^{-(m+1)} \left( 1 - C(\theta+{\delta})\right)^{-1}\GTV_{n, \veps}(u) \\
 & \hspace{1cm} + C\left( \frac{\veps}{a^2}+a \right)\lVert u \rVert_{\Lp{\infty}(\nu_n)}.
\end{align*}
This expression can be simplified using the relative smallness assumptions on $\veps,\theta,\delta$, and in particular we can write
\[ \sigma_\eta \TV(\Lambda_a (u \circ T_n)) \leq (1+ C(\veps^2 +a + \frac{{\delta}}{\veps} + \theta +{\delta} ) ) \GTV_{n,\veps}(u) + C\left(\frac{\veps}{a^2}+a \right)\lVert u \rVert_{\Lp{\infty}(\nu_n)}. \]
This is precisely iii).
\end{proof}

\begin{proof}[Proof \add{of the} lower bound \add{in} Theorem \ref{thm:cheeger-constant-conv}]
We are now ready to obtain a lower bound for $\mathcal{C}_{n,\veps}$.
To achieve this let $A_n^*$ be a solution to the graph Cheeger cut problem, so that in particular
\[ \mathcal{C}_{n,\veps}= \frac{\GTV_{n,\veps}(\one_{A_n^*}) }{\min\{ \nu_n(A_n^*), \nu_n(A_n^{*c}) \}} ,\] 
and let $\one_{A}:=\one_{A_n^*}\circ T_n$.

As a first step we obtain an a-priori lower bound on $\min \{\nu_n(A_n^*), \nu_n(A_n^{*c})\}$.
First of all notice that $\widetilde{\nu}_n(A)=\nu_n(A_n^*)$ as well as $\widetilde{\nu}_n(A^c)= \nu_n (A_n^{*c})$, where $\widetilde{\nu}_n$ is the measure in Proposition \ref{prop:AuxiliaryDensity}. This is simply because $T_n$ is a transport map between $\widetilde{\nu}_n$ and $\nu_n$. On the other hand, notice that by \eqref{eqn:densitiesbound} and the smallness assumptions on ${\delta}$ and $\theta$ it follows that
\[ \frac{1}{2}\min\{ \nu(A) , \nu(A^c)\}\leq \min\{ \widetilde{\nu}_n(A) , \widetilde{\nu}_n(A^c) \} \leq (1+C(\delta+\theta))\min\{ \nu(A) , \nu(A^c)\}.\]
Now, as in the first part of the proof of Proposition~\ref{prop:InterpMap} (with the choice $\delta=\frac34$) we have
\[ C\GTV_{n, \veps}(\one_{A_n^*}) \geq \TV_{\veps/2}(\one_A), \]
for a constant $C$ that only depends on dimension. 
By the upper bound estimates we know that $\mathcal{C}_{n,\veps/2} \leq \sigma_\eta\CM+ C=: \frac{C_0}{2}$ and so we may use Lemma \ref{Lemma:SmallnessMini} to conclude that
\[ \frac{\beta_0}{2} \leq \frac{1}{2} \min \{\nu(A), \nu(A^c)\} \leq \min \{ {\nu}_n(A_n^*) ,{\nu}_n(A_n^{*c})\}, \] 
for some fixed constant $\beta_0>0$.

Let us now observe that for all $c \in[0,1]$, by Proposition~\ref{prop:InterpMap}(ii)
\[ \left | \lVert \mathcal{I}_a \one_{A_n^*} -c \rVert_{\Lp{1}(\M)} - \lVert \one_{A} -c \rVert_{\Lp{1}(\M)} \right| \leq \lVert \mathcal{I}_a \one_{A_n^*} - \one_{A_n^*}\circ T_n \rVert_{\Lp{1}(\M)} \leq C a \GTV_{n,\veps}(\one_{A_n^*}),\]
for a fixed constant $C$. 
From this uniform estimate it follows that
\[\left | \min_{c \in [0,1]}\lVert \mathcal{I}_a \one_{A_n^*} -c \rVert_{\Lp{1}(\M)} - \min_{c\in [0,1]}\lVert \one_{A} -c \rVert_{\Lp{1}(\M)} \right| \leq \lVert \mathcal{I}_a \one_{A_n^*} - \one_{A_n^*} \rVert_{\Lp{1}(\M)} \leq C a \GTV_{n,\veps}(\one_{A_n^*}).\]
Using the above inequalities and iii) in Proposition \ref{prop:InterpMap} we conclude that
\begin{align*}
\frac{\sigma_\eta \TV(\mathcal{I}_a \one_{A_n^*})}{ \lVert \mathcal{I}_a \one_{A_n^*}- m_1(\mathcal{I}_a \one_{A_n^*})\rVert_{\Lp{1}(\M)}} & \leq \frac{\l 1+C\l\eps^2+a+\frac{\delta}{\eps}+\theta\r\r \GTV_{n,\eps}(\one_{A_n^*}) + C\l\frac{\eps}{a^2}+a\r}{\min_{c\in[0,1]} \|\one_A -c\|_{\Lp{1}(\M)}- Ca\GTV_{n,\eps}(\one_{A_n^*})} \\
 & = \frac{\l 1+C\l\eps^2+a+\frac{\delta}{\eps}+\theta\r\r \GTV_{n,\eps}(\one_{A_n^*}) + C\l\frac{\eps}{a^2}+a\r}{\min\{\nu(A),\nu(A^c)\}- Ca\GTV_{n,\eps}(\one_{A_n^*})} \\
 & \leq \frac{\l 1+C\l\eps^2+a+\frac{\delta}{\eps}+\theta\r\r \GTV_{n,\eps}(\one_{A_n^*}) + C\l\frac{\eps}{a^2}+a\r}{\l 1-C(\delta+\theta)\r\min\{\nu_n(A_n^*),\nu_n(A_n^{*c})\}- Ca\GTV_{n,\eps}(\one_{A_n^*})}
\end{align*}
We divide each of the terms in the numerator and denominator of the right hand side of the above expression by $\min \{\nu_n(A_n^*), \nu_n(A_n^{*c}) \}$ and use the a-priori lower bound on this term to obtain
\[ \frac{\sigma_\eta \TV(\mathcal{I}_a \one_{A_n^*})}{ \lVert \mathcal{I}_a \one_{A_n^*}- m_1(\mathcal{I}_a \one_{A_n^*})\rVert_{\Lp{1}(\M)}}\leq \frac{(1+ C( \veps^2 +a + \frac{{\delta}}{\veps} + \theta)) \mathcal{C}_{n,\veps} + C(\frac{\veps}{a^2} + a) }{1- Ca \mathcal{C}_{n,\veps} - C(\delta+\theta)}. \]
Then, we use the upper bound on $\mathcal{C}_{n,\veps}$ that we found in Section~\ref{sec:Upper} to conclude that:
\begin{equation}\label{eqn:interp-est}
\frac{\sigma_\eta \TV(\mathcal{I}_a \one_{A_n^*})}{ \lVert \mathcal{I}_a \one_{A_n^*}- m_1(\mathcal{I}_a \one_{A_n^*})\rVert_{\Lp{1}(\M)}}\leq \mathcal{C}_{n,\veps} + C \CM\l \veps^2 +a + \frac{{\delta}}{\veps} + \theta\r + C \left(\frac{\veps}{a^2} +a + \theta + \delta +\zeta \right).
\end{equation}
with probability at least $1-Cne^{-cn\theta^2\delta^m}-Ce^{-cn\zeta\min\{\eps^{\frac{m+1}{2}},\eps\zeta\}}$.
Thanks to Lemma \ref{lem:CheegerFunctions}, we know that the left hand side is bounded below by $\sigma_\eta\CM$. 
Choosing $a=\sqrt[3]{\eps}$ we have
\begin{equation} \label{eqn:interp-est-update}
\sigma_\eta\CM \leq \frac{\sigma_\eta \TV(\mathcal{I}_a \one_{A_n^*})}{ \lVert \mathcal{I}_a \one_{A_n^*}- m_1(\mathcal{I}_a \one_{A_n^*})\rVert_{\Lp{1}(\M)}} \leq \mathcal{C}_{n,\veps} + C(\CM+1)\l \sqrt[3]{\eps} +\frac{\delta}{\eps} + \theta + \zeta\r
\end{equation}
as required.
\end{proof}




\subsection{Proof of Theorem \ref{thm:rates-Cheeger-Cuts}} \label{sec:CheegerCutsProof}

Having established convergence rates for Cheeger constants in Theorem \ref{thm:cheeger-constant-conv}, we now turn to proving convergence estimates for the Cheeger cuts $A_n^*$ towards continuum Cheeger sets $A^*$.
The first task will be to construct subsets of $\M$ which have the correct volume and for which we can control the perimeter.
This requires two steps: first, constructing a subset $\tilde A$ of $\M$ with smooth boundary which approximates $\mathcal{I}_a \one_{A_n^*}$ (which is the continuum object on which we can control the TV norm), and second, constructing a set $\hat A$ which adjusts the volume of $\tilde A$. These two steps are necessary because the stability results that are available in this context, that is the context of Proposition \ref{prop:global-perimeter-minimizers}, are concerned with mass-constrained perimeter minimizers.
The next lemma addresses this second step.

\begin{lemma}\label{lem:fix-mass}
Suppose that $\M$ satisfies Assumptions~\ref{assumptions}. \addii{Then for any $B>0$ and $\delta \in (0,1)$ there exists a $C_{\delta,B}>0$ so that for any $\tilde A \subset \M$ with $\Per(\tilde A) \leq B$ and $\delta \leq \vol_\M(\tilde A) \leq 1-\delta$, and any $\vv$ satisfying $\delta < \vol_\M(\tilde A) + \vv < 1-\delta$, there exists a set $\hat A$ satisfying}
\addii{
\begin{align*}
\vol_\M(\hat A) &= \vol_\M(\tilde A) + \vv,  \qquad \vol_\M(\hat A \Delta \tilde A) = |\vv|, \\
\Per(\hat A) &\leq \Per(\tilde A) + C_{\delta, B} |\vv| ^{\frac{m-1}{m}},
\end{align*}
where the constant $C_{\delta, B}$ only depends upon the values of $B$ and $\delta$ (and not, for example, on $\tilde{A}$).}
\end{lemma}

\begin{proof}
\addii{We restrict our attention to $\vv>0$, as the other case is analogous by considering $\tilde A^c$. We also notice that, by considering $\tilde A \cup B_\M(x,r)$ for appropriate $r$ that the conclusion is immediate for $\vv$ bounded away from zero. Hence we only need to focus our attention on $\vv$ in a neighborhood of zero.}

\addii{To begin, we claim that for any choice of $\delta,B$ and for any $\kappa \in (0,1)$ there exists an $R_0$ so that for any set $E$ satisfying $\Per(E) \leq B$ and $\delta \leq \vol_\M(E) \leq 1-\delta$ there exists a point $x$ such that $\vol_\M(E \cap B_\M(x,r)) \leq \kappa \vol_\M(B_\M(x,r))$ for all $r \leq R_0$.
Suppose that the claim were false. Then we would have a sequence of sets $E_n$, which satisfy the volume and perimeter bounds, $\delta \leq \vol_\M(E_n) \leq 1-\delta$ and  $\Per(E_n)\leq B$, and which satisfy
\begin{equation}\label{eqn:contradict-density}\liminf_n \inf_{x \in \M} \sup_{r \leq 1/n} \frac{\vol_\M(E_n \cap B_\M(x,r))}{\vol_\M(B_\M(x,r))}\geq \kappa.
\end{equation}
By BV compactness, we may assume that the $E_n$ converge to some set $E^*$ in $\Lp{1}(\M)$, and that $E^*$ satisfies the same volume and perimeter bounds.
By the Lebesgue differentiation theorem
\[ \frac{\vol_\M(E^*\cap B_\M(x,r))}{\vol_\M(B_\M(x,r))} \to \mathds{1}_{E^*}(x) \qquad \text{as } r\to 0 \]
for almost every $x\in\M$.
By Egorov's theorem we can find a subset $\M^\prime\subseteq \M$ such that the convergence is uniform over $\M^\prime$ and where $\vol_\M(\M^\prime)\geq 1-\frac{\delta}{2}$.
Hence, there exists $\eta>0$ such that
\[ \sup_{r<\eta} \frac{\vol_\M(E^*\cap B_\M(x,r))}{\vol_\M(B_\M(x,r))} < \frac{\kappa}{3} \]
for all $x\in\M^\prime\setminus E^*$.
Note that $\vol_\M(\M^\prime\setminus E^*) \geq \frac{\delta}{2}$ and therefore
\begin{equation}\label{eqn:lebesgue-diff}
\vol_\M\left(\left\{x : \sup_{r < \eta}\frac{\vol_\M(E^* \cap B_\M(x,r))}{\vol_\M(B_\M(x,r))}  < \frac{\kappa}{3} \right\}\right) > \frac{\delta}{2}.
\end{equation}
Now, by the weak maximal function inequality, we have that, for all $\alpha > 0$
\[
\vol_\M \left(\left\{ x: \sup_{r < \eta} \frac{1}{\vol_\M(B_\M(x,r))} \int_{B_\M(x,r)} |\mathds{1}_{E^*} - \mathds{1}_{E_n}| \, \dd\vol_\M   >\alpha \right\} \right)  \leq \frac{C_{\mathrm{max}}}{\alpha} \|\mathds{1}_{E^*}-\mathds{1}_{E_n}\|_{\Lp{1}};
\]
notice that the weak maximal function inequality is a direct consequence of Vitali covering lemma, which holds for arbitrary separable metric spaces; the constant $C_{\mathrm{max}}$ can be written in terms of $m$ (the intrinsic dimension of the manifold) and the constant $C_2$ appearing in~\eqref{eqn:diffgeoEsimate} below. By picking $\alpha = \frac{\kappa}{3}$ and then picking $n$ large enough that $\|\mathds{1}_{E^*}-\mathds{1}_{E_n}\|_{\Lp{1}} < \frac{\delta \kappa}{12 C_{\mathrm{max}}}$, we then have that 
\[
\vol_\M \left(\left\{ x: \sup_{r < \eta} \frac{1}{\vol_\M(B_\M(x,r))} \int_{B_\M(x,r)} |\mathds{1}_{E^*} - \mathds{1}_{E_n}| \, \dd\vol_\M   \leq \frac{\kappa}{3} \right\}\right)  \geq 1-\frac{\delta}{4}.
\]
This inequality, combined with the triangle inequality and Equation \eqref{eqn:lebesgue-diff} then necessarily contradicts equation \eqref{eqn:contradict-density}, which proves our claim.}

\addii{Now, let $\kappa = 1/2$ and apply the claim to conclude that for the set $\tilde A$ we can find a point $x$ so that for all $r < R_0$ we have that $\vol_\M(\tilde A\cap B_\M(x,r)) \leq 1/2 \vol_\M(B_\M(x,r))$. Using a classical volume estimate from Riemannian geometry (see e.g. \cite{gray1974volume}), we may deduce that for all $x\in \M$ and all small enough $r$ we have:
\begin{equation}
\label{eqn:diffgeoEsimate}
C_1 r^m \leq \vol_\M(  B_\M(x,r)) \leq C_2r^m.
\end{equation}
We then select $r^*$ so that $\vol_\M( B_\M(x,r^*) \setminus \tilde A) = \vv$, and define $\hat A := \tilde A \cup B_\M(x,r^*)$. We note that by the choice of the point $x$, we have that
\[ \vartheta = \vol_\M(B_\M(x,r^*)\setminus\tilde{A}) = \vol_\M(B_\M(x,r^*)) - \vol_\M(B_\M(x,r^*)\cap \tilde{A}) \geq \frac12 \vol_\M(B_\M(x,r^*)) \geq \frac{C_1 r^{*m}}{2}, \]
and clearly
\[ \vartheta = \vol_\M(B_\M(x,r^*)\setminus\tilde{A}) \leq \vol_\M(B_\M(x,r^*)) \leq C_2 r^{*m}. \]
So,
\[
\tilde{C}_1 \vv \leq r^{*m} \leq \tilde{C}_2 \vv,
\]
with constants that depend only upon $B,\delta$, and not upon the particular set $\tilde A$.
Moreover, since for all $x \in \M$ and all small enough $r$ we have
\begin{equation} \label{eq:PerBallBound}
\Per(B_\M(x,r)) \leq Cr^{m-1},
\end{equation}
by~\cite{morgan1998riemannian} or~\cite{morgan2000some}, then the perimeter of $\hat{A}$ can be bounded by
\begin{align*}
\Per(\hat{A}) & \leq \Per(\widetilde{A}) + \Per(B_{\M}(x,r^*)) \\& \leq \Per(\widetilde{A}) + C(r^*)^{m-1} 
 \\& \leq \Per(\widetilde{A}) +  C\vv^{\frac{m-1}{m}}. \qedhere
\end{align*}
}
\end{proof}

\begin{proof}[Proof of Theorem \ref{thm:rates-Cheeger-Cuts}]
We use \add{the notation $\Bal(A)$} as introduced in the proof of the upper bound in Theorem~\ref{thm:cheeger-constant-conv}.

First notice that for a given discrete minimizer $A_n^*$ we can build a set $\tilde A \subseteq \M$  with the following properties (for $0 < z < 1/2$): 
\begin{enumerate}
	\item $\Per(\tilde A) \leq \frac{\TV(\mathcal{I}_a \one_{A_n^*})}{(1-2z)}$.
	\item $\lVert \one_{A_n^*} \circ T_n - \mathds{1}_{\tilde A}\rVert_{\Lp{1}(\M)} \leq \frac{Ca}{z}$.
\end{enumerate}
To see this, notice that by the coarea formula, and the fact that $0 \leq \mathcal{I}_a \one_{A_n^*} \leq 1$, we have that
\[ \TV(\mathcal{I}_a \one_{A_n^*}) = \int_0^1 \Per(\{\mathcal{I}_a \one_{A_n^*} \leq s\}) \,\dd s \geq \int_z^{1-z} \Per(\{\mathcal{I}_a \one_{A_n^*\leq s}\}) \, \dd s. \]
In turn there must exist a $ \widetilde{z} \in (z, 1-z)$ so that the set $\tilde A = \{\mathcal{I}_a \one_{A_n^*} > \widetilde{z} \}$ satisfies $\Per(\tilde A) \leq \frac{\TV(\mathcal{I}_a \one_{A_n^*})}{(1-2z)}$.
To obtain the second inequality we use Proposition~\ref{prop:InterpMap}, $\mathcal{I}_a\one_{A_n^*}\geq \tilde{z}\one_{\tilde{A}}$ and $1-\mathcal{I}_a\one_{A_n^*}\remmath{(x)} \geq (1-\tilde{z})\one_{\widetilde{A}^c}$, to estimate
\begin{align*}
Ca \GTV_{n,\e}(\one_{A_n^*}) & \geq \| \one_{A_n^*} \circ T_n - \mathcal{I}_a \one_{A_n^*}\|_{\Lp{1}(\M)} \\
 & = \int_{\M} \la \one_{A_n^*}(T_n(x)) - \mathcal{I}_a\one_{A_n^*}(x)\ra \, \dd \vol_{\M}(x) \\
 & = \int_{T_n^{-1}(A_n^*)} \l 1-\mathcal{I}_a\one_{A_n^*}(x)\r \, \dd \vol_{\M}(x) + \int_{\M\setminus T_n^{-1}(A_n^*)} \mathcal{I}_a\one_{A_n^*}(x) \, \dd \vol_{\M}(x) \\
 & \geq (1-\tilde{z}) \int_{T_n^{-1}(A_n^*)} \one_{\widetilde{A}^c}(x) \, \dd \vol_{\M}(x)+ \tilde{z} \int_{\M\setminus T_n^{-1}(A_n^*)} \one_{\widetilde{A}}(x) \, \dd \vol_{\M}(x) \\
 & \geq \min(\widetilde{z},1-\widetilde{z}) \lVert\one_{A_n^*} \circ T_n - \mathds{1}_{\tilde A}\rVert_{\Lp{1}(\M)} \\
 & \geq z \lVert\one_{A_n^*} \circ T_n - \mathds{1}_{\tilde A}\rVert_{\Lp{1}(\M)},
\end{align*}
and notice that the upper bound for $\mathcal{C}_{n,\veps}$ can be used to bound the left hand side of the above expression by a constant times $a$.

\add{We will use} \rem{From} the bound\add{s}
\begin{align*}
& \lda \mathcal{I}_a\one_{A_n^*} - m_1(\mathcal{I}_a\one_{A_n^*})\rda_{\Lp{1}(\M)} - \Bal(\tilde{A}) \\
& \hspace{0.9cm} = \max\lb \min_{c\in [0,1]} \lda \mathcal{I}_a\one_{A_n^*} - c\rda_{\Lp{1}(\M)} - \underbrace{\vol_{\M}(\widetilde{A})}_{=\|\one_{\widetilde{A}}\|_{\Lp{1}(\M)}}, \min_{d\in [0,1]} \lda \mathcal{I}_a\one_{A_n^*} - d\rda_{\Lp{1}(\M)} - \underbrace{\vol_{\M}(\widetilde{A}^c)}_{=\|1-\one_{\widetilde{A}}\|_{\Lp{1}(\M)}} \rb  \\
& \hspace{0.9cm} \leq \lda \mathcal{I}_a\one_{A_n^*} - \one_{\tilde{A}} \rda_{\Lp{1}(\M)} \\
& \hspace{0.9cm} \leq \lda \mathcal{I}_a\one_{A_n^*} - \one_{A_n^*}\circ T_n \rda_{\Lp{1}(\M)} + \lda \one_{A_n^*}\circ T_n - \one_{\tilde{A}} \rda_{\Lp{1}(\M)} \\
& \hspace{0.9cm} \leq \frac{Ca}{z}
\end{align*}
and
\begin{align*}
\addmath{\Bal(\tilde{A})} & \addmath{= \min \left\{ \vol_{\M}(\tilde{A}),\vol_{\M}(\tilde{A}^c)\right\} }\\
 & \addmath{= \min\lb \| \one_{\tilde{A}}\|_{\Lp{1}(\M)}, \|1-\one_{\tilde{A}}\|_{\Lp{1}(\M)} \rb }\\
 & \addmath{\geq \min \Bigg\{\|\one_{A_n^*}\circ T_n\|_{\Lp{1}(\M)} - \underbrace{\|\one_{\tilde{A}}-\one_{A_n^*}\circ T_n\|_{\Lp{1}(\M)}}_{\leq \frac{Ca}{z}\GTV_{n,\eps}(\one_{A_n^*})\leq \frac{Ca}{z}}, }\\
 & \qquad\qquad\qquad\qquad \addmath{\|1-\one_{A_n^*}\circ T_n\|_{\Lp{1}(\M)} - \|\one_{\tilde{A}}-\one_{A_n^*}\circ T_n\|_{\Lp{1}(\M)} \Bigg\} }\\
 & \addmath{\geq \min\lb \|\one_{A_n^*}\circ T_n\|_{\Lp{1}(\M)}, \|1-\one_{A_n^*}\circ T_n\|_{\Lp{1}(\M)} \rb - \frac{Ca}{z} }\\
 & \addmath{= \min\lb \int_{\M} \one_{A_n^*}(T_n(x)) \, \dd \vol_{\M}(x), \int_{\M} \one_{(A_n^*)^c}(T_n(x)) \, \dd \vol_{\M}(x) \rb - \frac{Ca}{z} }\\
 & \addmath{\geq c\min\lb \int_{\M} \one_{A_n^*}(T_n(x)) \tilde{\rho}_n(x) \, \dd \vol_{\M}(x), \int_{\M} \one_{(A_n^*)^c}(T_n(x)) \tilde{\rho}_n(x) \, \dd \vol_{\M}(x) \rb - \frac{Ca}{z} }\\
 & \addmath{= c\min\lb \nu_n(A_n^*), \nu_n((A_n^*)^c) \rb - \frac{Ca}{z}. }
\end{align*}
Assuming that $\frac{a}{z}$ is sufficiently small (which by later choices is equivalent to $\eps$ being sufficiently small), and the a-priori lower bound on $\nu_n(A_n^*)$ derived in the proof of the lower bound in Theorem~\ref{thm:cheeger-constant-conv} we can assume that $\Bal(\tilde{A})\geq c$ for some constant $c>0$ independent of all other parameters.

The perimeter estimate $\Per(\tilde A) \leq \frac{\TV(\mathcal{I}_a \one_{A_n^*})}{(1-2z)}$ then implies, if we further restrict to $z<\frac14$, that
\begin{align*}
& \frac{\sigma_\eta \TV(\one_{\tilde A})}{\Bal(\tilde A)} - \frac{\sigma_\eta\TV(\mathcal{I}_a(\one_{A_n^*}))}{ \lVert \mathcal{I}_a \one_{A_n^*}- m_1(\mathcal{I}_a \one_{A_n^*})\rVert_{\Lp{1}(\M)}} \\
  &\leq \underbrace{\frac{\sigma_\eta\TV(\mathcal{I}_a(\one_{A_n^*}))}{ \lVert \mathcal{I}_a \one_{A_n^*}- m_1(\mathcal{I}_a \one_{A_n^*})\rVert_{\Lp{1}(\M)}}}_{\addmath{\leq C \text{ by \eqref{eqn:interp-est-update}}}} \left( \frac{(1 + 4z)\lVert \Lambda_a \one_A- m_1(\Lambda_a\one_{A})\rVert_{\Lp{1}(\M)} - \Bal(\tilde A)}{\Bal(\tilde A)} \right)\\
  &\leq C\left( \frac{a}{z} + z \right)
\end{align*}
where $A$ is the set satisfying $\one_{A_n^*}\circ T_n = \one_A$.
Using equations \eqref{eqn:cheeger-upper} and \eqref{eqn:interp-est-update}, and choosing $a=\sqrt[3]{\eps}$, gives
\begin{align*}
  0 &\leq \frac{\sigma_\eta \TV(\one_{\tilde A})}{\Bal(\tilde A)} - \mathcal{C}_{n,\e} + \mathcal{C}_{n,\e} - \sigma_\eta\CM \\
  &\leq \frac{\sigma_\eta \TV(\one_{\tilde A})}{\Bal(\tilde A)} - \frac{\sigma_\eta\TV(\mathcal{I}_a(\one_{A_n^*}))}{ \lVert \mathcal{I}_a \one_{A_n^*}- m_1(\mathcal{I}_a \one_{A_n^*})\rVert_{L^1(\M)}} + C\left(\sqrt[3]{\eps} + \frac{\delta}{\e} + \theta  + \zeta \right) \\
  &\leq C\left(\sqrt[3]{\eps} + \frac{\delta}{\e} + \theta + \frac{\sqrt[3]{\eps}}{z} + z + \zeta \right) =: C \kappa
\end{align*}

By Assumption~\ref{assumption:Cheeger-sets} and Proposition~\ref{prop:global-perimeter-minimizers} there exists a continuum Cheeger $A^*$ such that $|\vol_\M(\tilde A) - \vol_\M(A^*)| \leq C \kappa^{1/2}$. \addii{ Moreover, there is a $\delta_\M$ (independent of $A^*$) such that $\delta_\M \leq \vol_\M(A^*) \leq 1-\delta_\M$, as it follows from well-known asymptotics~\cite{morgan2000some} for the isoperimetric profile near zero and compactness of the perimeter functional, which imply that solutions to the continuum Cheeger problem can not have arbitrarily small volume; recall the discussion right after Remark \ref{rem:OtherConv1}.}

\addii{ 
Invoking Lemma \ref{lem:fix-mass} with $\delta= \delta_\M/2$ and $B = 2 \sigma_\eta \mathcal{C}_\M $, and assuming that $\kappa$ is small enough (which can be guaranteed after setting $z:= \veps^{1/6}$ and setting all the other parameters to be small enough),} we can then find a set $\hat A$ so that $\vol_\M(\hat A) = \vol_\M(A^*)$,  $\Per(\hat A) - \Per(A^*) \leq C \kappa^{\frac{m-1}{2m}}$ 
and $\vol_\M(\hat A \Delta \tilde A) \leq C\kappa^{1/2}$.
%
Via Proposition~\ref{prop:global-perimeter-minimizers} we immediately obtain
\begin{displaymath}
\alpha(\hat A) \leq C\kappa^{\frac{m-1}{4m}},
\end{displaymath}
as desired.
Combining these estimates, 
\begin{align*}
\lVert \mathds{1}_{A^*} - \one_{A_n^*} \circ T_n \rVert_{\Lp{1}(\M)} & \leq  \lVert \one_{A_n^*} \circ T_n - \mathds{1}_{\tilde A} \rVert_{\Lp{1}(\M)} + \vol_\M(\tilde A ~\Delta~ \hat A) + \alpha(\hat A) \\
 & \leq C\left( \sqrt[6]{\eps} + \kappa^{1/2} + \kappa^{\frac{m-1}{4m}}\right) \\
 & \leq C \kappa^{\frac{m-1}{4m}}.
\end{align*}
This concludes the proof.
\end{proof}

\begin{remark}
\label{rem:OtherConv2}
To show the statement in Remark \ref{rem:OtherConv1} first notice that 
since $T_{n \sharp} \widetilde{\nu}_n =\nu_n $ then
\[  \nu_n\left( A^*  \Delta A_n^*   \right) = \lVert \mathds{1}_{A_n^*}\circ T_n - \mathds{1}_{A^*}\circ T_n  \rVert_{\Lp{1}(\widetilde{\nu}_n)} \]
and in turn by the triangle inequality
\[ \lVert \mathds{1}_{A_n^*}\circ T_n - \mathds{1}_{A^*}\circ T_n  \rVert_{\Lp{1}(\widetilde{\nu}_n)} \leq    \lVert \mathds{1}_{A_n^*}\circ T_n - \mathds{1}_{A^*}  \rVert_{\Lp{1}(\widetilde{\nu}_n)} + \lVert \mathds{1}_{A^*} - \mathds{1}_{A^*}\circ T_n  \rVert_{\Lp{1}(\widetilde{\nu}_n)}\] 	
The first term is the bound from Theorem \ref{thm:rates-Cheeger-Cuts} (enlarging the constant to account for the change of measure using Proposition \ref{prop:AuxiliaryDensity}). On the other hand the second term can be estimated by the volume of the tubular neighborhood of width $2\delta$ around $\partial A^*$ (where $\delta$ appears in \eqref{def:OT}) and thus, given the smoothness of $\partial A^*$ by $C \Per(A^*) \delta$.
\end{remark}

\begin{remark}\label{rem:not-sharp}
In the proof of Theorem \ref{thm:rates-Cheeger-Cuts} we have relied upon the tools for mass-constrained perimeter minimizers, and fixed the mass separately.
This has caused various losses in the estimates.
For example, we have lost a power of two twice, once when comparing the mass and once when comparing the mass-constrained minimizers.
The $\frac{m-1}{m}$ is also an artifact of needing to fix the mass in order to use the isoperimetric stability results.
It seems very likely that the $\kappa^{\frac{m-1}{4m}}$ should in reality be a $\kappa^{1/2}$.
Similarly, relying on stability results for sets, as opposed to the relaxed formulation in Lemma \ref{lem:CheegerFunctions} necessitates the first step in the proof of Theorem \ref{thm:rates-Cheeger-Cuts}, which introduces the parameter $z=\sqrt[6]{\eps}$ into $\kappa$, and accordingly decreases the power on $\e$ in $\kappa$.
Pursuing such issues, by obtaining improved stability results for total variation minimization problems, is outside the scope of this work.
\end{remark}

\appendix
\section{}

\begin{proof}[Distance estimates: Proof of \eqref{app:DistanceEstimate}]
In the context of $\eqref{app:DistanceEstimate}$, the quantity $|w|_x$ is the same as $d_\M(x,y)$ where $y =\exp_{x}(w)$, and $|\tilde{w}|_{\tilde x}$ is the same as $d_\M(\tilde x , \exp_{y}(PT_{x,y}(v)) )$. This is known as a Levi-Civita parallelogramoid (see \cite{Cartan} Chapter X, section VI), and satisfies the estimate
\[
  d_\M^2(\tilde x , \exp_{y}(PT_{x,y}(v)) ) - d_\M^2(x,y) \approx \frac{8}{3}\langle R(v,w) v,w \rangle_x,
\]
where the ``$\approx$'' means that we have neglected higher order terms and $R$ is the Riemann curvature tensor.
Since $R$ is bilinear then we have the bound
\[ \la d_\M^2(\tilde x , \exp_{y}(PT_{x,y}(v)) ) - d_\M^2(x,y) \ra \leq C \lda R(v,w)v\rda_x \| w\|_x \leq C \|v\|_x^2 \| w\|_x^2 \leq Ca^2h^2 \]
which gives the desired estimate.
%
%
\end{proof}

\begin{proof}[Comparison of Jacobians between exponential maps: Proof of \eqref{app:JacobiansEstiamtes}.]
Fix $x,y \in \M$ for which $d_\M(x,y) \leq h $ and for convenience let $t_0:= d_\M(x,y)$. Let $v \in B(0,2a) \subseteq \R^m$, and write the coordinates of $v$ as $v_1, \dots, v_m$. Let $t\in [0, t_0] \rightarrow \gamma(t)$ be the unit speed geodesic which at time zero is at $x$ and at time $t_0$ is at $y$. Let $e_1, \dots, e_m$ be an orthonormal basis for $T_x \M$ and for each $i=1, \dots, m$ let 
\[ t\in [0, t_0] \mapsto E_i(t) \] 
be the parallel transport of $e_i$ along $\gamma$. Finally, define the map
\[ P:(t,v) \longmapsto \exp_{\gamma(t)}\left( \sum_{i=1}^m v_i E_i(t) \right) \] 
Notice that for any given $t$, the map $P(t, \cdot)$ defines a coordinate chart for the set $B_\M(\gamma(t),2a)$. In what follows for all $i,j =1, \dots, n$ we quantify how the inner product $\left \langle \frac{\partial}{\partial v_i} , \frac{\partial}{\partial v_j} \right \rangle_{P(t,v)} $ changes in $t$ for any given $v$. 
First of all notice that when $v=0$, we have for all $t$ 
\[ \left \langle \frac{\partial}{\partial v_i} , \frac{\partial}{\partial v_j} \right \rangle_{\gamma(t)}= \delta_{ij}. \]
This is because for all $t$, the coordinate chart $P(t,\cdot)$ is a chart of normal coordinates at $\gamma(t)$.

Let us now fix an arbitrary $v \not =0$ in the ball $B(0,2a)$ and define
\[ \varphi(s,t):= \left \langle \frac{\partial}{\partial v_i} , \frac{\partial}{\partial v_j} \right \rangle_{P\left(t,s\frac{v}{|v|}\right)}, \quad s\in [0,|v|], \quad t \in [0,t_0]. \]
Notice that the function $\varphi$ is smooth. Also, just as before,
\[ \varphi(0,t)=\delta_{ij}, \quad \forall t \in [0,t_0]. \]
In particular, it follows that 
\begin{align*}
|\varphi(|v|,t_0) - \varphi(|v|,0) | & = | \varphi(|v|,t_0) - \varphi(0,t_0) +\varphi(0,0) - \varphi(|v|,0) | 
\\&= \left | \int_{0}^{|v|} \frac{\partial }{\partial s} \varphi(s,t_0) \, \dd s - \int_{0}^{|v|} \frac{\partial }{\partial s} \varphi(s,0) \, \dd s \right |  
\\ & \leq \int_{0}^{|v|} \left | \frac{\partial }{\partial s} \varphi(s,t_0) - \frac{\partial }{\partial s} \varphi(s,0) \right | \, \dd s.
\end{align*}
On the other hand, for any given $s \in [0, |v|]$ we have
\[ \frac{\partial }{\partial s} \varphi(s,t_0) - \frac{\partial }{\partial s} \varphi(s,0) = \int_{0}^{t_0} \frac{\partial^2 \varphi(s,t) }{\partial t \partial s} \,\dd t. \]
The smoothness of the function $\varphi$ can then be used to conclude that for all $s \in [0, |v|]$ and all $t \in [0, t_0]$ we have 
\[ \left \lvert \frac{\partial^2 \varphi(s,t) }{\partial t \partial s} \right \rvert \leq C \]
for some constant $C$.
In fact, using the compactness of $\M$, the constant $C$ can be chosen to be independent of $x \in \M$, $y \in B_\M(x,{ h})$ and $v \in B(0,2a)$. Putting all the above estimates together we deduce
\[ \lvert \varphi(|v|, t_0) - \varphi(|v|,0) \rvert \leq C t_0 |v|. \]
Hence, for all $i,j$,
\[ \left \lvert \left \langle \frac{\partial }{\partial v_i} , \frac{\partial }{\partial v_j}\right \rangle_{P(0,v)} - \left \langle \frac{\partial }{\partial v_i} , \frac{\partial }{\partial v_j}\right \rangle_{P(t_0,v)} \right \rvert \leq C ad_\M(x,y) . \]
Since the Jacobian $J_x(v)$ is given by the square root of the determinant of the Gramian matrix $ \left[\left \langle \frac{\partial }{\partial v_i} , \frac{\partial }{\partial v_j}\right \rangle_x \right]_{i,j} $, and as $J_x(v)$ is bounded away from zero, we may estimate
\[ |J_x(v) - J_y(PT_{x,y}(v))| \leq C a d_\M(x,y) \leq Ca h , \]
for all $v$ with Euclidean norm less than $a$.
\end{proof}

Following~\cite{docarmo1992riemannian} let us define the metric on $\T\M$ as follows.
Given $\gamma_1, \gamma_2:(-\beta, \beta) \rightarrow \T \M$ two smooth integral curves $\gamma_1: t \mapsto (x_1(t),v_1(t))$ and $\gamma_2:t\mapsto (x_2(t), v_2(t)) $ associated to the vector fields $V_1,V_2$ (on $\T\M$), passing through $(x,v)$ at time zero (i.e.$\gamma_1(0)=(x,v), \gamma_2(0)=(x,v)$), we define
\begin{equation}
\label{def:RiemanTM}
\langle V_1 , V_2 \rangle_{x,v} := \langle \dd\pi(V_1), \dd\pi(V_2) \rangle_x + \left\langle \frac{\DD}{\dd t}v_1(0) , \frac{\DD}{\dd t}v_2(0) \right\rangle_x, 
\end{equation}
where here $\pi$ is the projection map $\pi: (\tilde x,\tilde v) \in \T\M \mapsto \tilde x \in \M$ and $\frac{\DD}{\dd t}v_1(0)$ and $\frac{\DD}{\dd t} v_2(0)$ represent the covariant derivatives at time zero of the vector fields $t \mapsto v_1(t)$ and $t \mapsto v_2(t)$ along the curves $t \mapsto x_1(t)$ and $t \mapsto x_2(t)$ respectively.
It is straightforward to check that this definition does not depend on the specific choice of curves. 

\begin{proof}[Estimation of Jacobian of $\widetilde \Psi$ in $\mathcal{B}_{h,a}$: Proof of \eqref{app:JacobiansPsi}]
 First of all, we start by making precise the metric for $\T^2\M$ that we referred to in the proof of Proposition \ref{prop:ConvolutionMonotone}. This metric is analogous to the one $\T \M$ was endowed with in \eqref{def:RiemanTM}. Given two vector fields $V_1, V_2$ on $\T^2 \M$, let $t \mapsto \alpha_1(t)=(x_1(t),v_1(t), \tilde v_1(t))$ and $t \mapsto \alpha_2(t)=(x_2(t),v_2(t), \tilde v_2(t))$ be two curves on $\T^2\M$ satisfying
\[ \alpha_1(0)=(x,v,\tilde v) = \alpha_2(0), \]
\[ \dot{\alpha}_1(0) = V_1(x,v,\tilde v) , \quad \dot{\alpha}_2(0)= V_2(x,v,\tilde v)\]
and define
\begin{equation}
g_{x,v,\tilde v}(V_1, V_2) := \left\langle\dd\pi(V_1) , \dd\pi(V_2) \right\rangle_x + \left\langle \frac{\DD}{\dd t}v_1(0) , \frac{\DD}{\dd t}v_2(0) \right\rangle_x + \left\langle \frac{\DD }{\dd t}\tilde v_1(0) , \frac{\DD}{\dd t}\tilde v_2(0) \right\rangle_x.
\label{def:RiemanT2M}
\end{equation}
Since $\pi:\T^2\M \to \M$ projects onto the first coordinate the first term on the right hand side of \eqref{def:RiemanT2M} can be seen as $\langle \dot x_1,\dot x_2 \rangle_x$. 



 For a fixed $x_0 \in \M$ we may consider the set of all $(x,v,v') \in \mathcal{B}^2_{2h,2a}$ for which $x \in B_\M(x_0, r)$ for some order one quantity $r$. 
 We consider a neighborhood $\mathcal{V}$ of the origin in $\R^m$ small enough to ensure that the map
\begin{equation} 
(p_1, \dots, p_m) \in \mathcal{V} \mapsto \exp_{x_0}\left(\sum_{i=1}^m p_i E_i \right) 
\label{aux:coordinatesM}
\end{equation}  
is a diffeomorphism, where $E_1, \dots, E_m$ is an arbitrary fixed orthonormal basis for $\T_{x_0}\M$. We then consider the coordinate chart
\begin{equation}
(p_1, \dots, p_m, y_1, \dots, y_m, z_1, \dots, z_m) \in \mathcal{V} \times \mathcal{Y} \times \mathcal{Z} \mapsto \left( \exp_{x}\left(\sum_{i=1}^m p_i E_i \right), \sum_{i=1}^m y_i \frac{\partial }{\partial p_i}, \sum_{i=1}^m z_i \frac{\partial }{\partial p_i} \right) 
\label{aux:CoordinatesT2M}
\end{equation}
Requiring $a,h,r$ to be small enough, allows us to assume that the set of points $(x,v,v') \in \mathcal{B}^2_{2h,2a}$ for which $x \in B_\M(x_0, r)$ is contained in the neighborhood $\mathcal{V} \times \mathcal{Y} \times \mathcal{Z}$. Moreover, since
\[ \widetilde{\Psi}(p,0,0)= (p,0,0), \]
by the smoothness and compactness of $\M$ we may assume that the image  under $\widetilde{\Psi}$ of the set of points $(x,v,v') \in \mathcal{B}^2_{2h,2a}$ for which $x \in B_\M(x_0, r)$ is still contained in $\mathcal{V} \times \mathcal{Y} \times \mathcal{Z}$.

In what follows, we may abuse notation slightly and use $\widetilde{\Psi}$ to denote the map $\widetilde{\Psi}$ in the above coordinates (restricting the domain to a smaller neighborhood than the one where we defined the coordinates, so that the image of $\Psi$ is  contained in the neighborhood where the coordinates were defined). We use $\mathcal{G}(p,y,z)$ to denote the Grammian matrix at the point $(p,y,z)$ in the above coordinates.

A straightforward computation using the parallel transport equations in coordinates (and in particular also the geodesic equations) shows that the Jacobian matrix $\frac{\partial \widetilde{\Psi}}{\partial(p,y,z)}(p,0,0)$ satisfies 
\[ \frac{\partial \widetilde{\Psi}}{\partial(p,y,z)}(p,0,0) =\Id \]
where $\Id$ is the $m \times m$ identity matrix. We can then use the smoothness and compactness of $\M$  to conclude that
\[  \det(\mathcal{G}(p,y,z) ) = \det(\mathcal{G}(p,0,0) ) + O(a)  \]
\[ \det(\mathcal{G}( \widetilde{\Psi}(p,y,z)) ) = \det(\mathcal{G}(\Psi(p,0,0)) ) + O(a) = \det(\mathcal{G}(p,0,0) ) + O(a)   \]
\[   \frac{\partial \widetilde{\Psi}}{\partial(p,y,z)}(p,y,z) = \Id + O(a)    \]
for all $y,z$ with $|y|\leq h, |z| \leq a$, $h \leq a$, with a uniform  $O(a)$ term that does not depend on $p$ nor the point $x_0 \in \M$ around which we constructed the normal coordinates. The bottom line is that 
\[ \sqrt{\det(\mathcal{G}(p,y,z) )} \left \lvert \frac{\partial \widetilde{\Psi}}{\partial(p,y,z)} \right \rvert^{-1} = (1+ O (a)) \cdot \sqrt{\det(\mathcal{G}(\widetilde{\Psi}(p,y,z)))},\]
for all $z$ with Euclidean norm less than $a$ and all $y$ with norm less than $h$. This is \eqref{app:JacobiansPsi} in coordinates.
\end{proof}

\bibliography{Trend-filter}
\bibliographystyle{siam}

\end{document}